\definecolor{gray}{HTML}{d2d2d2}
\newtheorem{thm}{Theorem}
\newtheorem{ex}[thm]{Example}
\newtheorem{lemma}[thm]{Lemma}
\newtheorem{prop}[thm]{Proposition}
\newtheorem{cor}[thm]{Corollary}
\newtheorem{rem}[thm]{Remark}
\newcommand{\R}{\mathds{R}}
\newcommand{\de}{\partial}
\newcommand{\pk}{para-K\"ahler }
\newcommand{\Ric}{\operatorname{Ric}}
\newcommand{\grad}{\operatorname{grad}}
\newcommand{\tr}{\operatorname{tr}}
\newcommand{\Id}{\mathrm{Id}}
\newcommand{\E}{\mathrm{E}}
\newcommand{\T}{\mathcal{T}}
\newcommand{\K }{K\"{a}hler }
\newcommand{\C}{\mathds{C}}
\title{
Projectively equivalent \pk and para-K\"ahler-Einstein  metrics with non-parallel Benenti tensors and their normal forms in dimension four
} \author{Gianni Manno, Filippo Salis }
\begin{document}

\maketitle

\begin{abstract}
The study of projectively equivalent metrics, i.e., metrics sharing the same unparametrized geodesics, is a classical and well-established area of investigation. In the \K context, such branch of research goes by the name of \emph{c-projective geometry}: it mainly studies \emph{c-projectively equivalent metrics}, i.e., \K metrics sharing the same 
curves that are the complex analogue of the geodesics, called \emph{$J$-planar}, where $J$ is the complex structure. In this paper, we develop the theory of the projective equivalence in the \pk context by studying \pk metrics sharing the same \emph{$\T$-planar curves}, where $\T$ is the para-complex structure: we call such metrics \emph{pc-projectively equivalent}. After establishing some general results in arbitrary dimension, we focus on the $4$-dimensional case. One of the main achievement is a local description of $4$-dimensional pc-projectively (but not affinely) equivalent metrics and, as an application of this result, we characterize which of them are of Einstein type.
\end{abstract}

\smallskip\noindent
\textbf{MSC 2020}: 53A20, 53C15, 53C25, 32Q15, 32Q20

\smallskip\noindent
\textbf{Keywords}: Projective equivalence, pc-projective geometry, \pk metrics, para-K\"ahler-Einstein metrics, Benenti tensors.

%

\section{Introduction}
 
To start with, a justification of the name \emph{pc-projective geometry} is due: it stands for \emph{para-complex projective geometry}, whose basic definitions are given in Section \ref{intro} below. It takes its cue from the name \emph{c-projective geometry}, that stands for \emph{complex projective geometry}, a well-known branch of research area \cite{bmmr,bmr,cemn,cmr,cg,dgw,fg,msv,mr} whose main subject are K\"ahler metrics sharing the same \emph{J-planar curves}, where $J$ is the complex structure, that are the complex analogue of the geodesics of (pseudo-)Riemannian metrics. 

\subsection{Basic definitions of pc-projective geometry}\label{intro}

An \emph{almost para-complex manifold} is a $2n$-dimensional manifold $M$ provided with a field of endomorphisms $\T$ such that $\T^2=\Id$, having eigenvalues $1$ and $-1$, whose associated eigendistributions 
$$
T^{\pm}:=\ker(\T\mp\Id)
$$
are $n$-dimensional. An almost para-complex manifold whose distributions $T^{\pm}$ are integrable is called a \emph{para-complex manifold}: this is the same to require the vanishing of the Nijenhuis tensor of $\mathcal{T}$.
%
A \emph{\pk manifold} is a para-complex manifold $M$ endowed with a \emph{para-Hermitian} metric $g$, that is, a pseudo-Riemannian metric satisfying
$$
g\bigl(\T(\cdot)\,  ,\T (\cdot)\bigr)=- g(\cdot\,,\,\cdot)\, ,
$$
whose associated fundamental $2$-form
\begin{equation}\label{eq:omega}
\omega(\cdot\, ,\, \cdot)=g\bigl(\mathcal{T}(\cdot)\, ,\,\cdot\,\bigr)
\end{equation}
is symplectic. 
From now on, we denote such a \pk manifold by the triple $(M,g,\mathcal{T})$.
 It turns out that $g$ possesses a neutral signature, i.e., $(n,n)$ and, moreover, distributions $T^{\pm}$ are $g$-isotropic, namely   $g$ satisfies $g(T^+,T^+)=g(T^-,T^-)=0$. Taking this in account, a natural choice of local coordinates on a $n$-dimensional \pk manifold are the \emph{null coordinates}: since the eigendistributions $T^+$ and $T^-$ of $\T$ are integrable, one can locally choose {adapted coordinates} $(x^1,\dots,x^{2n})$ to $T^+$ and $T^-$, namely coordinates such that $\partial_{x^i}$ and $\partial_{x^{n+i}}$ lie, respectively, in $T^+$ and $T^-$ for  $i=1,\dots, n$.
In these coordinates, the matrix of the components of the para-complex structure $\T$ takes the simple block form
\begin{equation}\label{eq:matrix.T.adapted}
\T=\left(
\begin{array}{cc}
\Id_n & 0_n
\\
0_n & -\Id_n
\end{array}
\right)
\end{equation}
whereas the \pk metric $g$ an off-diagonal block form.
%

\smallskip\noindent
A regular curve $\gamma : I \subseteq \mathbb{R} \to M$ is called {\it $\T$-planar} if there exist smooth functions $\alpha, \beta\in C^\infty(I)$ such that
  \begin{equation}\label{tauplanar}
  \nabla^g_{\dot\gamma} \dot\gamma = \alpha \,\dot\gamma + \beta\,\T(\dot\gamma),
  \end{equation}
where $\nabla^g$ here, and in what follows, denotes the Levi-Civita connection of $g$. We underline that the definition of $\mathcal{T}$-planar curves can in fact be formulated without any reference to a metric. Indeed, 
given an almost para-complex manifold $(M,\mathcal{T})$, one may consider a \emph{para-complex connection}, that is, a torsion-free linear connection $\nabla$ on $TM$ against which $\mathcal{T}$ is parallel, i.e., such that
$\nabla \mathcal{T} = 0$: for any such connection, the notion of $\mathcal{T}$-planarity of a curve $\gamma$ can be defined exactly as above\footnote{The notion of $\T$-planar curve closely follows that of $J$-planar curve in the case of K\"ahler manifolds, where $J$ is the complex structure, see \cite{bmmr} for more details and motivations.  Both the concepts of $\T$- and $J$- planar curve are special cases of a more general definition, i.e., that of \emph{$F$-planar curve}, where $F$ is a generic field of endomorphisms. One can consult \cite{mikes_siny,mikes1,mikes_book} for the basics on this topic.}.  

A vector field $X$ on $M$ is called  \emph{(infinitesimal) pc-projective symmetry} if its local flow sends $\T$-planar curves to $\T$-planar curves. 

\smallskip
One can then consider families of para-complex connections sharing the same $\mathcal{T}$-planar curves. 
We shall refer to such a family as a \emph{pc-projective class}, and say that two para-complex connections $\nabla$ and $\widehat\nabla$ are \emph{pc-projectively equivalent} if they belong to the same pc-projective class, denoted by $[\nabla]$. Accordingly, two \pk metrics $g$ and $\hat{g}$  are \emph{pc-projectively equivalent} if their Levi-Civita connections are so. We denote by $[g]$ the set of pc-projectively equivalent metrics to $g$. Two \pk metrics are \emph{affinely equivalent} if they share the same Levi-Civita connection. Obviously, two affinely equivalent \pk metrics are also pc-projectively equivalent.

One of the most important properties of pc-projectively equivalent metrics is that we can construct a $2$-parametric family of such metrics if we know two of them which are non-proportional. More precisely, if $g$ and $\hat{g}$ belong to the same pc-projective class, then the metrics of the following family
\begin{equation}\label{eq:lin.comb.metrics}
\frac{\left(\alpha\det(g)^{\frac{1}{2(n+1)}}g^{-1} + \beta\det(\hat{g})^{\frac{1}{2(n+1)}}\hat{g}^{-1} \right)^{-1}}{\det\left(\alpha\det(g)^{\frac{1}{2(n+1)}}g^{-1} +  \beta\det(\hat{g})^{\frac{1}{2(n+1)}}\hat{g}^{-1} \right)^{\frac12}}\,,\quad \alpha, \beta\in\R\,,
\end{equation}
also belong to the same pc-projective class. Of course formula \eqref{eq:lin.comb.metrics} can be iterated if one knows an arbitrary number of pc-projectively equivalent \pk metrics. This results is not trivial and will be proved and discussed in Section \ref{sec:pc-invariant}, in particular, Corollary \ref{cor:lin.comb.metrics}.


\subsection{Motivations}\label{sec:motivations}

Para-K\"ahler manifolds can be equivalently described in terms of \emph{bi-Lagrangian structures}.  
Indeed, if  $(M, g, \T)$ is a \pk manifold, then the eigendistributions $T^{\pm}$ of $\mathcal{T}$
form two complementary integrable  Lagrangian distributions with respect to $\omega$ given by \eqref{eq:omega}.  
Conversely, given a symplectic manifold $(M, \omega)$ endowed with two complementary integrable Lagrangian distributions $T^{\pm}$, one can define an endomorphism $\mathcal{T}$ by setting $\mathcal{T}|_{T^{\pm}} = \pm \Id$.  
Then, $\mathcal{T}$ is a para-complex structure and $(M, g, \mathcal{T})$ becomes a \pk manifold, where $g=\omega\left(\T(\cdot),\,\cdot\,\right)$. 
These are important aspects for which para-K\"ahler
geometry and, more in general, para-complex geometry is an area of research rather active, as evidenced
by lots of papers in this field. In this regard, the survey \cite{rockym} shows a large spectrum of applications of this
geometry, mainly focused on actions of Lie groups on the aforementioned manifolds, together with a historical
introduction. One can also consult \cite{al,amt,amt2} for further study and their bibliography for further readings. For
different applications, one can see \cite{hl} and references therein, where, among other things, it was shown a
direct relation between Lagrangian submanifolds of para-K\"ahler manifolds and the Monge-Kantorovich mass
transport problem.

Within the pc-projective geometry, many natural questions arise that have direct analogues in real and c-projective geometry (see \cite{bmmr,bmm,cemn,Lie,m_alone} and references therein).  

To start with, a classical problem is the so-called \emph{metrizability problem}, that goes as follows: given a pc-projective class $[\nabla]$, does it contain the Levi-Civita connection of some \pk metric? 
In the case that a pc-projective class $[\nabla]$ is metrizable, i.e., $[\nabla]=[\nabla^g]$ for some \pk metric $g$, the next natural question  is to understand ``how big'' is the dimension of the class $[g]$ of the pc-projectively equivalent metrics to $g$, i.e., its \emph{degree of mobility}. One can also investigate, within a pc-projective class, the existence of a metric with some distinguished properties, for instance, \emph{Einstein metrics} 
or \pk surfaces admitting pc-projective vector fields, i.e., the \emph{Lie problem} in the \pk context\footnote{In the original paper \cite{Lie}, S. Lie formulated the problem for real $2$-dimensional manifolds, even if such problem can be easily generalized to any dimension. One can also consult \cite{bmm,m_alone}. The \K counterpart can be found, for instance, in \cite{bmmr}.}.
%
%


The local geometry of \pk metrics clearly plays a fundamental role in the problems outlined above.  
Throughout the paper, we establish several results that illuminate different aspects of these questions within the \pk framework.  
The core of the work, however, is an exhaustive local description of pc-projectively equivalent \pk metrics on \pk surfaces, with particular attention devoted to the subclass of Einstein metrics.  
Although the problem can, in principle, be formulated in any even dimension, already in real dimension four the situation becomes substantially richer than in the \K setting, see Section \ref{sec.analogie}.

\subsection*{Notation and conventions}

For the definition of para-complex numbers, para-holomorphic functions, para-holomorphic atlas and para-holomorphic sectional curvature  we refer to \cite{al,amt,amt2,rockym,gm,hl,ms3,ms4}. 

\medskip
\noindent
We use the Einstein convention, i.e., repeated indices are implicitly summed over.

\noindent
Moreover, we denote by
\begin{itemize}
\item[-] $f_{x^i}$ the derivative of $f$ w.r.t. the $x^i$ variable;
\item[-] $\partial_{i}:=\partial_{x^i}$ the $i$-th coordinate vector field associated to a system of coordinate $(x^1,\dots,x^{2n})$, if there is no risk of ambiguity;
\item[-] $dx^idx^j$ the symmetric product between $dx^i$ and $dx^j$, i.e.,
$$
dx^idx^j:=\frac12 (dx^i\otimes dx^j + dx^j\otimes dx^i)\,;
$$
\item[-] $\mathcal{L}_X$ the Lie derivative along the vector field $X$;
\item[-] $\Gamma(E)$ the module of local sections of a bundle $E$;
\item[-] $\nabla^g$ the Levi-Civita connection of a metric $g$.
\end{itemize}


\subsection{Description of the main results}

\subsubsection{Preliminary results: Benenti tensors in the \pk context}

Even though affinely equivalent metrics remain of independent interest, in line with \cite{bmmr},
this paper focuses on the case that is most natural from the perspective of projective geometry: metrics that are pc-projectively equivalent but not affinely equivalent. To study such metrics, of fundamental importance is the following tensor field $A$ of type $(1,1)$: let $g,\hat g$ be two pc-projectively equivalent metrics, then we define
\begin{equation}\label{A.def}
A:=\left(\frac{\det \hat g}{\det g}\right)^{\frac{1}{2(n+1)}}\hat g^{-1}g.
\end{equation}
(We note that the previous quantity is well defined as $\frac{\det \hat g}{\det g}>0$ since \pk metrics possess neutral signature. Consequently, we also have 
$\det A>0$).
%
An important property of $A$ is that it is parallel w.r.t. the Levi-Civita connection $\nabla^{\hat g}$ of $\hat g$ (equivalently, w.r.t. $\nabla^g$) if and only if $g$ and $\hat g$ are affinely equivalent.  Indeed,
from \eqref{A.def}, we get $g = (\det A)^{1/2}\, \hat g A$.
Differentiating with respect to $\nabla^{\hat g}$ yields
$$\nabla^{\hat g} g =(\det A)^{1/2}\, \hat g \left(\tfrac12 \tr(A^{-1}\nabla^{\hat g} A)\, A  + \nabla^{\hat g} A
\right).$$
Hence, 
$$\nabla^{\hat g} g = 0
  \;\;\Longleftrightarrow\;\;
\tfrac12\,\tr(A^{-1}\nabla^{\hat g} A)\,\mathrm{Id}
   + A^{-1}\nabla^{\hat g} A = 0.$$
Taking the trace of the latter relation shows that $\tr(A^{-1}\nabla^{\hat g} A)=0$, and substituting back gives
$\nabla^{\hat g}A=0$.  
In particular, the vanishing of $\nabla^{\hat g} g$ is equivalent to $\nabla^{\hat g} A=0$, which precisely means that the Levi-Civita connections of $g$ and $\hat g$ coincide.

\smallskip  
To address the problem of finding normal forms of pc-projectively equivalent \pk metrics, as shown in Section \ref{sec:pc-invariant}, it is crucial to study the space  
\begin{equation}\label{eq:A.cal}
\mathcal{A}(g,\T)=\{A\in\Gamma(\mathrm{End}(TM))\;|\; A \text{ is $g$-symmetric, } [A,\T]=0,\ 
A \text{ satisfies \eqref{eq.A} below $\forall$ vector fields } X\},
\end{equation}
\begin{equation}\label{eq.A}
\nabla^g_X A
= X^\flat\otimes \Lambda+\Lambda^\flat \otimes X
-(\mathcal{T}X)^\flat\otimes \mathcal{T}\Lambda
-(\mathcal{T}\Lambda)^\flat \otimes \mathcal{T}X,
\end{equation}
where
\begin{equation}\label{tildeL}
\Lambda=\tfrac{1}{4}\grad\tr A\,,
\end{equation}
with ${}^\flat: TM\to T^*M$ the musical isomorphism associated to $g$. We note that
 \eqref{eq.A} is the  para-complex analogue of the equation governing c-projectively equivalent \K metrics (cf. \cite[equation (2.4)]{bmmr}).

In fact,
an important property of  tensors \eqref{A.def}, shown in the paper, is that they belong to   $\mathcal{A}(g,\T)$. Moreover, any non-degenerate $A\in\mathcal{A}(g,\T)$ is of type \eqref{A.def} (see Corollary \ref{cor:A-nondeg} below), in particular, its determinant is greater than zero in view of the fact the \pk metrics have neutral signature. Hence, by means of \eqref{A.def}, the metric 
\begin{equation}\label{eq: companion.A}
\hat{g}=(\det A)^{-\frac12} g A^{-1}
\end{equation}
is pc-projectively equivalent to $g$. In analogy with the definition given in the real case \cite{bm}, we call the subset of the non-degenerate tensors of $\mathcal{A}(g,\T)$ the space of the \emph{Benenti tensors} of the \pk metric $(g,\T)$:
$$
\textrm{\emph{Benenti tensors of} } (g,\T):= \{\, A\in\mathcal{A}(g,\T)\,\,|\,\, \det(A)\neq 0 \,\}\,.
$$ 
Indeed, as in the real case, in view of what  we said above, the knowledge of the space of the Benenti tensors of $(g,\T)$ allows to construct the entire class $[g]$ of pc-projectively equivalent metrics to $g$. In fact, if $A$ is a Benenti tensor, then the metric $\hat{g}$ given by \eqref{eq: companion.A} belongs to $[g]$ and formula \eqref{eq:lin.comb.metrics} gives a family of pc-projectively equivalent metrics to $g$.

\smallskip
On the other hand, from the symplectic viewpoint, if we denote $\phi=g(A\T\cdot,\cdot)$, where $A\in\mathcal{A}(g,\T)$, a straightforward computation shows that \eqref{eq.A} is equivalent to
\begin{equation}\label{eq:formula.Apos}
2\nabla_X^g \phi=d\tr_\omega\phi  \wedge(\T X)^\flat- \T d\tr_\omega \phi\wedge X^\flat\,.
\end{equation}
We note that this is precisely the equation of Hamiltonian $2$-forms, extensively studied in the K\"ahler setting (see, e.g., \cite{acg.hamiltonian.1, acg.hamiltonian.2, acg.hamiltonian.3, acg.hamiltonian.4, acg.self.dual}).

\subsubsection{Main theorems}
The first main result, that holds in any dimension, is contained in the theorem below and it concerns para-K\"ahler-Einstein metrics that are pc-projectively equivalent.
%
 %
%
\begin{thm}\label{thm:main.einstein}
Let $g$ and $\hat{g}$ be pc-projectively equivalent para-K\"ahler-Einstein metrics on the same manifold. Then metrics \eqref{eq:lin.comb.metrics} are para-K\"ahler-Einstein metrics belonging to the same pc-projective class of $g$.
\end{thm}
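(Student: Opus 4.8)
The plan is to split the statement into its pc-projective part and its Einstein part, to rewrite the family \eqref{eq:lin.comb.metrics} as a pencil of Benenti tensors, and then to reduce the Einstein condition along this pencil to a condition that is \emph{linear} in the Benenti tensor; such a condition, holding at the two endpoints $g$ and $\hat g$, then propagates to the whole pencil.

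The pc-projective part requires nothing new: by Corollary~\ref{cor:lin.comb.metrics} and the computations of Section~\ref{sec:pc-invariant}, every metric in \eqref{eq:lin.comb.metrics} is \pk and belongs to $[g]$, so that only the Einstein property has to be checked. To make this amenable, I would first pass to Benenti tensors. With $A$ as in \eqref{A.def}, substituting $\hat g^{-1}=\left(\det\hat g/\det g\right)^{-\frac1{2(n+1)}}A\,g^{-1}$ shows that the tensor field inside \eqref{eq:lin.comb.metrics} equals $\det(g)^{\frac1{2(n+1)}}(\alpha\,\Id+\beta A)\,g^{-1}$, and hence that the metric \eqref{eq:lin.comb.metrics} coincides with
\begin{equation*}
\hat g_B:=(\det B)^{-\frac12}\,g\,B^{-1},\qquad B:=\alpha\,\Id+\beta A .
\end{equation*}
Since $\mathcal A(g,\T)$ is the solution space of the \emph{linear} equation \eqref{eq.A}, it is a vector space that contains $\Id$ (with $\Lambda=0$) and $A$ (one of the properties of \eqref{A.def} recalled above); hence $B\in\mathcal A(g,\T)$, and $\hat g_B$ is obtained from the Benenti tensor $B$ via \eqref{eq: companion.A}, for every $(\alpha,\beta)$ making $B$ non-degenerate. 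Moreover $\hat g_{\Id}=g$ and $\hat g_A=\hat g$, so the statement reduces to the following: \emph{if $\hat g_{\Id}$ and $\hat g_A$ are Einstein, then $\hat g_B$ is Einstein for every non-degenerate $B$ in the $2$-plane $\langle\Id,A\rangle\subseteq\mathcal A(g,\T)$.}

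The core of the argument is that, for $B\in\mathcal A(g,\T)$, the Einstein property of $\hat g_B$ is equivalent to a condition on $B$ which is \emph{linear}. Here is the mechanism I would use. One first prolongs \eqref{eq.A}: differentiating it and commuting covariant derivatives produces a closed first-order linear system for the triple $\Sigma_B:=(B,\Lambda_B,\mu_B)$, where $\Lambda_B=\tfrac14\grad\tr B$ and $\mu_B$ is a new scalar unknown --- this is the para-complex counterpart of the c-projective prolongation of \cite{bmmr}, with signs adapted to $\T^2=\Id$; in particular, by uniqueness of the prolongation, the assignment $B\mapsto\Sigma_B$ is linear. Next one inserts $B$ into the classical expression for $\Ric(\hat g_B)$ in terms of $\Ric(g)$ and the (pc-)projective $1$-form relating $\nabla^g$ and $\nabla^{\hat g_B}$ (built from $d\log\det B$ and $\T^{*}d\log\det B$), and eliminates all second-order derivatives of $B$ by means of \eqref{eq.A} and its prolongation; using that $g$ is \pk and Einstein, i.e.\ $\Ric(g)=\tfrac{\mathrm{Scal}_g}{2n}\,g$ with $\mathrm{Scal}_g$ constant, the requirement that $\Ric(\hat g_B)$ be proportional to $\hat g_B$ simplifies to a linear relation $\ell(\Sigma_B)=0$, hence to a linear condition on $B$. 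It is automatically satisfied at $B=\Id$ (because $\hat g_{\Id}=g$ is Einstein) and at $B=A$ (because $\hat g_A=\hat g$ is Einstein), hence it is satisfied on all of $\langle\Id,A\rangle$; the Schur lemma ($2n\ge4$) then guarantees that the proportionality factor is constant. Therefore all metrics of \eqref{eq:lin.comb.metrics} are \pk and Einstein, as claimed. (This is consistent with the homothety-invariance $\hat g_{tB}=t^{-(n+1)}\hat g_B$ of both the \pk and the Einstein properties.)

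The step I expect to be the real obstacle is the identification of the linear relation $\ell$ and the verification that it is genuinely linear: this amounts to carrying out the para-complex prolongation of \eqref{eq.A} with the correct constants and then checking that, after using the prolongation identities and the Einstein condition for $g$, no truly non-linear term in $B$ survives in the obstruction to ``$\Ric(\hat g_B)\propto\hat g_B$''. The corresponding statements in real and c-projective geometry are known (cf.\ \cite{bmmr} and references therein), and the present computation is their formal para-analogue, but the sign bookkeeping induced by $\T^2=\Id$ has to be done with some care. A partly computation-free alternative proceeds through Ricci forms: for \pk metrics in a common pc-projective class one has a $dd^{c}$-type identity of the form $\rho^{\hat g_B}=\rho^g+\tfrac{n+1}{2}\,d\!\left(\T^{*}d\log\det B\right)$, with $\rho^{g}=\Ric(g)(\T\cdot\,,\,\cdot)$; since $\rho^g=\lambda_0\,\omega^g$ by hypothesis, and since $\phi=g(A\T\cdot\,,\,\cdot)$ solves the Hamiltonian-$2$-form equation \eqref{eq:formula.Apos} (so that the $2$-forms $\alpha\,\omega^g+\beta\,\phi$, linear in $B$, are at one's disposal), the Einstein condition $\rho^{\hat g_B}=\lambda_B\,\omega^{\hat g_B}$ again reduces to a linear condition on $B$, and one concludes as above. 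Either way, the structural point --- linearity of the Einstein condition in the Benenti tensor, together with the two given Einstein members $g=\hat g_{\Id}$ and $\hat g=\hat g_A$ --- is what yields the theorem.
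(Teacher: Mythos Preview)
Your reduction of the family \eqref{eq:lin.comb.metrics} to the pencil $\hat g_B$ with $B\in\langle\Id,A\rangle\subset\mathcal A(g,\T)$ is correct and coincides with the paper's setup in Section~\ref{sec:pcEinstein} (there $\tilde A_{[\alpha,\beta]}=\alpha A+\beta\,\Id$). From that point on, however, your route diverges from the paper's.

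The paper does \emph{not} argue via linearity of an abstract Einstein obstruction. It proceeds by direct computation: from the difference-of-Ricci formula \eqref{eq:main} it extracts, under the assumption that \emph{both} $g$ and $\hat g$ are Einstein, an explicit expression for $\nabla^g\Lambda$ (Corollary~\ref{cor.Filippo.Einstein.1}); this formula involves $(\det A)^{-1/2}$ and $g(A^{-1}\Lambda,\Lambda)$ and is thus manifestly nonlinear in $A$. Substituting it into the Ricci difference for the pair $(g,\tilde g_{[\alpha,\beta]})$ yields $\widetilde\Ric=\tilde\lambda\,\tilde g$ with an explicit function $\tilde\lambda$ (Lemma~\ref{lemma:tilde.lambda}), and a separate calculation then shows $d\tilde\lambda=0$. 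The two Einstein hypotheses are consumed \emph{together} at the start, not used as endpoints of an interpolation.

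Your argument hinges entirely on the claim that ``$\hat g_B$ is Einstein'' is equivalent to a \emph{linear} condition $\ell(\Sigma_B)=0$ on the prolonged triple $\Sigma_B=(B,\Lambda_B,\mu_B)$. You flag this yourself as ``the real obstacle'', and it is a genuine gap: you have not written down the para-complex prolongation, not identified $\ell$, and not checked that the nonlinear terms (coming from $B^{-1}$, $\det B$, and the quadratic pieces of the Ricci-difference formula) actually cancel. In the real- and c-projective settings this linearity is a theorem (Einstein $\Leftrightarrow$ normal BGG solution, cf.\ \cite{cemn}), but that machinery is nowhere developed here for pc-projective geometry, and the sign changes from $\T^2=\Id$ are precisely the ``care'' you mention but do not carry out. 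Your Ricci-form alternative has the same status: the $dd^c$-type identity is asserted by analogy, not proved. As written, then, the proposal is a plausible strategy with its central lemma missing. Should you supply it, the payoff over the paper's proof would be conceptual transparency and a cleaner route to generalisation; the price is that you lose the explicit formula \eqref{eq:def.tilde.lambda} for the Einstein constant along the pencil.
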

Using this theorem and the results contained in the forthcoming Theorem \ref{thm:main}, we shall classify, in Section
\ref{sec:Einstein.4.dim}, all four-dimensional para-K\"ahler-Einstein metrics possessing a non-parallel Benenti tensor.

\medskip\noindent
By considering all that we said so far, it is clear, in this context, the importance to study \pk metrics together with their Benenti tensors, i.e., the triple $(g,\T,A)$, with $A\in\mathcal{A}(g,\T)$ non-degenerate, up to local equivalence.
More precisely, we consider two triples $(g,\T,A)$ and $(g',\T',A')$ to be \emph{locally equivalent} if and only if there exists a local diffeomorphism 
$\phi$ such that
$$
\phi^{*} (g') = g, \quad 
\phi^{*} (\T') = \T, \quad 
\phi^{*} (A') = A .
$$
This can be done by taking into account  the scalar invariants of $A$.  
In particular, in the four-dimensional case, we consider the $\T$-invariant distribution  
\begin{equation}\label{eq:D}
D=\mathrm{span}\left\{\tfrac12\grad\tr A\,,\,\, \grad\sqrt{\det A} \,,\,\, \tfrac12\T \grad\tr A \,,\,\, \T  \grad\sqrt{\det A}\right\}\,,\end{equation}
whose rank is one of the tools for distinguishing different local models of $(g,\T,A)$.

\smallskip\noindent
Indeed, the second main result, that is contained in the theorem below, is a list of normal forms of \pk surfaces according to $\dim D$, i.e., a list of triple $(g,\T,A)$ up to the aforementioned equivalence. More precise criteria for distinguishing such normal forms are contained in Table \ref{tab:ranks}, page \pageref{tab:ranks}. 

\smallskip\noindent
Before stating such result, a consideration is due. In the four-dimensional case, Beneneti tensors can have at most two distinct eigenvalues. This can be seen if one writes such tensors in null coordinates as they assume a block-diagonal form with $2$ identical blocks (see Remark \ref{rem:sqrtA} for more details).
\begin{thm}\label{thm:main}
Let $(M,g,\T)$ be a \pk surface and $A\in\mathcal{A}(g,\T)$ be a non-parallel (w.r.t. the Levi-Civita connection $\nabla^g$ of $g$) Benenti tensor of $(g,\T)$. Let $\rho$ and $\sigma$ be the eigenvalues of $A$. Then, in a neighborhood of almost every point of $M$, there exists a system of coordinates $(x^1,x^2,x^3,x^4)$ in which the triple $(g,\T,A)$ takes one of the following forms: 
\begin{itemize}
\item Non-degenerate case: $\dim D=4$.
\begin{itemize}
\item real Liouville metrics:
\begin{eqnarray*}
&g\hspace{-0.2cm}&=(\rho-\sigma)\,(dx^1 dx^1+\varepsilon\,dx^2 dx^2)
-\frac{1}{\rho-\sigma}\Big(\rho_{x^1}^2(dx^3+\sigma\,dx^4)^2
+\varepsilon\,\sigma_{x^2}^2(dx^3+\rho\,dx^4)^2\Big),
\\
&\T\hspace{-0.2cm}&= g^{-1}\omega\,, \quad \mathrm{with}\quad\omega=\rho_{x^1}\, dx^1\wedge( dx^3+\sigma\,dx^4)+\sigma_{x^2}\,dx^2\wedge( dx^3+\rho\, dx^4)\,, 
\\[0.2cm]
&A\hspace{-0.2cm}&=\rho\,\partial_{1}\!\otimes dx^1
+\sigma\,\partial_{2}\!\otimes dx^2
+(\rho+\sigma)\,\partial_{3}\!\otimes dx^3
-\partial_{4}\!\otimes dx^3
+\rho\sigma\,\partial_{3}\!\otimes dx^4
\end{eqnarray*}
where $\varepsilon=\pm1$, $\rho=\rho(x^1)$ and $\sigma=\sigma(x^2)$ are arbitrary functions such that $\rho\neq 0$, $\sigma\neq 0$, $\rho_{x^1}\neq 0$, $\sigma_{x^2}\neq 0$.
\item complex Liouville metrics:
\begin{eqnarray*}
&g\hspace{-0.2cm}&=\tfrac14(\bar\rho-\rho)\,\big(dz^2 - d\bar z^2\big)
-\frac{4}{\rho-\bar\rho}\left(
\bar\rho_{\bar z}^2\big(dx^3+\rho\,dx^4\big)^2
-\rho_z^2\big(dx^3+\bar\rho\,dx^4\big)^2
\right),
\\
&\T\hspace{-0.2cm}&=g^{-1}\omega\,, \quad \mathrm{with}\quad\omega=\rho_z \;dz\wedge( dx^3+\bar \rho\,dx^4)+{\bar\rho}_{\bar z}\;d\bar z\wedge( dx^3+\rho\, dx^4),
\\[0.2cm]
&A\hspace{-0.2cm}&=\rho\,\partial_{z}\!\otimes dz
+\bar\rho\,\partial_{\bar z}\!\otimes d\bar z
+(\rho+\bar\rho)\,\partial_{x^3}\!\otimes dx^3
-\partial_{x^4}\!\otimes dx^3
+|\rho|^{2}\,\partial_{x^3}\!\otimes dx^4,
\end{eqnarray*}
%
%
where $z=x^1+i\,x^2$, $\rho=\rho(z)$ is an arbitrary function such that $\rho\neq 0$ and $\rho_z\neq 0$.
\end{itemize}
\item Degenerate case: $\dim D<4$.
\begin{itemize}
\item $\dim D=3$: no $(g,\T,A)$ satisfying the hypotheses of the theorem.
\item $\dim D=2$:
\begin{itemize}
\item 
$
g=\frac{\rho_{x^2}}{\mu} \, dx^1 dx^1 -  2\frac{\nu\rho_{x^2}}{\mu}\, dx^1 dx^3
-\mu\rho_{x^2} \, dx^2 dx^2 + \frac{\nu^2\rho_{x^2}}{\mu}\,  dx^3 dx^3
+2 (c-\rho)\nu_{x^4}\,  dx^3 dx^4,
$

\medskip

$\T=-\mu\,  \partial_1\otimes dx^2 + \nu\,  \partial_{1}\, \otimes dx^3 - \frac{1}{\mu}\,  \partial_2\otimes dx^1 + \frac{\nu}{\mu}\,  \partial_2\otimes dx^3 + \partial_3\otimes \partial_3  - \partial_4\otimes dx^4$,

\medskip

$
A=\rho\, (\partial_{1}\otimes dx^1+ \partial_{2}\otimes dx^2) + c\, (\partial_{3}\otimes dx^3+ \partial_{4}\otimes dx^4) + (c-\rho) \nu\, \partial_1\otimes dx^3,
$

\medskip
where $c\in\R\setminus\{0\}$, $\rho=\rho(x^2)$, $\mu=\mu(x^2)$ and $\nu=\nu(x^3,x^4)$ are arbitrary functions such that $\mu\neq 0$, $\rho_{x^2}\neq 0$, $\nu_{x^4}\neq 0$. Here $\sigma=c$.

\bigskip

\item $
g=2\rho_{x^3}\, dx^1dx^3  + 2\sigma_{x^4}\, dx^1 dx^4  + 2\sigma\rho_{x^3}\, dx^2 dx^3
+2 \rho\sigma_{x^4}\, dx^2 dx^4,  
$

\medskip

$\T$ is given by \eqref{eq:matrix.T.adapted} with $n=2$, 

\medskip

$A=(\rho+\sigma)\, \partial_{1}\otimes dx^1 +\rho\sigma\,  \partial_{1}\otimes dx^2 - \partial_{2}\otimes dx^1 + \rho\,  \partial_{3}\otimes dx^3 + \sigma\,  \partial_{4}\otimes dx^4$,

\medskip

where $\rho=\rho(x^3)$ and $\sigma=\sigma(x^4)$ are arbitrary functions such that $\rho\neq 0$, $\sigma\neq 0$, $\rho_{x^3}\neq 0$, $\sigma_{x^4}\neq 0$.
 
\bigskip

\item $(g,-\T,A)$ where $g,\T,A$ are the same as the above point.

\bigskip

\item 
$g=2\rho_{x^3}\, dx^1 dx^3  -2 \sigma_{x^4}\, dx^1dx^4 +2\sigma\rho_{x^3}\, 2dx^2 dx^3  - 2\rho\sigma_{x^4}\, dx^2 dx^4$,

\medskip

$\T=\frac{\rho+\sigma}{\rho-\sigma}\, \partial_1\otimes dx^1 +\frac{2\rho\sigma}{\rho-\sigma}\, \partial_1\otimes dx^2 -\frac{2}{\rho-\sigma}\, \partial_2\otimes dx^1 - \frac{\rho+\sigma}{\rho-\sigma}\, \partial_2\otimes dx^2 -\partial_3\otimes dx^3 + \partial_4\otimes dx^4$,

\medskip

$A=(\rho+\sigma)\, \partial_1\otimes dx^1 + \rho\sigma\, \partial_1\otimes dx^2 - \frac{k\sigma\rho_{x^3}}{\rho-\sigma}\, \partial_1\otimes dx^3 - \frac{k\rho\sigma_{x^4}}{\rho-\sigma}\, \partial_1\otimes dx^4 - \partial_2\otimes dx^1 + \frac{k\rho_{x^3}}{\rho-\sigma}\, \partial_2\otimes dx^3 + \frac{k\sigma_{x^4}}{\rho-\sigma}\,  \partial_2\otimes dx^4 + \rho\, \partial_3\otimes dx^3 + \sigma \, \partial_4\otimes dx^4$,

\medskip

where $k\in\R$, $\rho=\rho(x^3)$ and $\sigma=\sigma(x^4)$ are arbitrary functions such that $\rho\neq 0$, $\sigma\neq 0$, $\rho_{x^3}\neq 0$, $\sigma_{x^4}\neq 0$.

\end{itemize}
\item $\dim D=1$:
\begin{itemize}
\item 
$
g=2\rho_{x^3} \, dx^1 dx^3 + 2\big((\rho-c)F\big)_{x^3} \, dx^2 dx^3  + 
2(\rho-c)F_{x^4}\,  dx^2dx^4$,

\medskip

$\T$ is given by \eqref{eq:matrix.T.adapted} with $n=2$, 

\medskip

$
A=\rho\,\partial_1\otimes dx^1 + (\rho-c)F\,\partial_1\otimes dx^2 + c\, \partial_2\otimes dx^2 + \rho\,\partial_3\otimes dx^3 + (c-\rho)\frac{F_{x^3}}{F_{x^4}}\,\partial_4\otimes dx^3+ c\, \partial_4\otimes dx^4
$,

\medskip

where  $c\in\R\setminus\{0\}$, $\rho=\rho(x^3)$ and $F=F(x^2,\varphi(x^3,x^4))$ are arbitrary functions such that $\rho\neq 0$, $\rho_{x^3}\neq 0$, $F_{x^4}\neq 0$. Here $\sigma=c$.

\bigskip

\item $(g,-\T,A)$ where $g,\T,A$ are the same as the above point.
\end{itemize}
\end{itemize}

\end{itemize}
\end{thm}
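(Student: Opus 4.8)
The overall strategy parallels the K\"ahler case of~\cite{bmmr}: we convert the defining condition $A\in\mathcal A(g,\T)$ into an explicit PDE system, read off from it the admissible algebraic and spectral types of $A$, integrate the system for each type, and present the outcome in coordinates adapted to the eigenvalues of $A$. Throughout we work on the open dense subset of $M$ on which the number of distinct eigenvalues of $A$, its Jordan type, the local constancy of each eigenvalue, and the integer $\dim D$ are all locally constant --- this is the meaning of ``almost every point''. On such a set we fix null coordinates $(x^1,x^2,x^3,x^4)$ as in \eqref{eq:matrix.T.adapted} with $n=2$, so that $g$ is block off-diagonal; since $A$ is $g$-symmetric and $[A,\T]=0$, it is block diagonal with two $2\times2$ blocks having the same characteristic polynomial (Remark~\ref{rem:sqrtA}), hence has at most the two eigenvalues $\rho,\sigma$, and $\tfrac12\tr A=\rho+\sigma$, $\sqrt{\det A}=|\rho\sigma|$ are, up to sign, the elementary symmetric functions of the common spectrum. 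Writing the Christoffel symbols of the off-diagonal metric and substituting into \eqref{eq.A} --- equivalently, using the Hamiltonian $2$-form equation \eqref{eq:formula.Apos} for $\phi=g(A\T\cdot,\cdot)$ --- turns $A\in\mathcal A(g,\T)$ into a finite, overdetermined first-order system for the components of $g$ and $A$.

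The first consequence of this system is a description of the spectral data. The non-constant eigenvalues of $A$ behave as in the real Benenti theory (cf.~\cite{bm}): each of them is constant along the eigen-leaves of the other eigenvalue and, after an admissible coordinate change, becomes a function of a single coordinate; moreover two distinct non-constant eigenvalues can be functionally dependent only when their gradients both lie in $T^{+}$ or both lie in $T^{-}$. Feeding this back into \eqref{eq:D} yields the classification by $\dim D$ recorded in Table~\ref{tab:ranks}: the generic configuration --- two distinct eigenvalues whose gradients, together with their $\T$-images, span a $4$-dimensional space --- gives $\dim D=4$, while every other admissible configuration forces $\dim D\le 2$; in particular $\dim D=3$ is impossible, and the hypothesis that $A$ is non-parallel rules out $\dim D=0$ (a parallel $A$ has constant $\tr A$ and $\det A$).

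When $\dim D=4$ the $2\times2$ block of $A$ must have two distinct eigenvalues --- otherwise $\sqrt{\det A}$ is a function of $\tr A$ and $\dim D\le 2$ --- which are either both real or form a para-complex conjugate pair. In the real case we take $\rho,\sigma$ together with two functions constant on complementary eigen-leaves as coordinates; the PDE system then determines $g$, $\T$ and $A$ up to the stated real Liouville form, with the sign $\varepsilon$ and the conditions $\rho,\sigma,\rho_{x^1},\sigma_{x^2}\neq0$ encoding the residual freedom and the neutral signature. In the conjugate case the same computation, carried out with the complex coordinate $z=x^1+ix^2$ and $\rho=\rho(z)$, produces the complex Liouville form. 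One also checks conversely that each of these triples is genuinely \pk with $A$ a non-parallel, non-degenerate element of $\mathcal A(g,\T)$, so both families really occur.

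For $\dim D\le 2$ either one eigenvalue is a nonzero constant $c$, or the two non-constant eigenvalues have gradients in a single summand $T^{\pm}$, or the block of $A$ is scalar or of Jordan type; each subtype is integrated by the same recipe, but the system now leaves genuine functional freedom, which appears as the free functions $\mu(x^2)$, $\nu(x^3,x^4)$ and $F(x^2,\varphi(x^3,x^4))$ of the statement, while the $\T\mapsto-\T$ symmetry of the construction pairs the two models with $\dim D=1$ and two of the models with $\dim D=2$. The \emph{main obstacle} of the proof lies precisely in this degenerate analysis: one must prove that the list of admissible types is exhaustive --- in particular that $\dim D=3$ never arises and that the scalar and Jordan types contribute nothing beyond the models already listed --- and then use up the residual diffeomorphism freedom to bring each surviving family to the single normal form displayed, consistently with the auxiliary data of Table~\ref{tab:ranks}. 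Establishing this exhaustiveness and carrying out the (rather long) integrations in the degenerate cases are the steps that require by far the most care.
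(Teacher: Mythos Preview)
Your outline misidentifies where the real work lies, and in one place is simply wrong. You write that ``every other admissible configuration forces $\dim D\le 2$; in particular $\dim D=3$ is impossible'', as if this were a consequence of the spectral bookkeeping recorded in Table~\ref{tab:ranks}. It is not: Table~\ref{tab:ranks} explicitly lists $\dim D=3$ configurations (one non-constant eigenvalue with isotropic gradient, the other with non-isotropic gradient), and nothing in the algebra of $A$ or of \eqref{eq:D} rules them out a priori. In the paper this case is eliminated only after constructing adapted coordinates (Section~\ref{sec.rank.3}), writing out $g,\T,A$ in them, and showing that the para-K\"ahler condition $\nabla^g\T=0$ together with \eqref{eq.A} forces a contradiction on the eigenvalue functions. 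Your proposal contains no mechanism for this step; asserting it as a consequence of the table is a genuine gap. Similarly, your appeal to ``scalar and Jordan types'' of the $2\times2$ block is beside the point: Lemma~\ref{prop:Msing} shows that on the dense set where the analysis takes place the two eigenvalues are always distinct, so no Jordan analysis is needed.

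More broadly, your strategy --- write \eqref{eq.A} in generic null coordinates and integrate --- is not the route the paper takes, and it is not clear it would be tractable. The paper instead builds, case by case, a distinguished \emph{commuting} frame out of the canonical vector fields $V_i=\grad\mu_i$ and $\T V_i$: these are para-holomorphic (Proposition~\ref{prop:hamilt}), the $\T V_i$ are Killing (Corollary~\ref{cor:mui.killing}), and they all commute (Proposition~\ref{prop:para-holomorphic}). In the non-degenerate case this frame is already four-dimensional; the paper then shows the leaves of $\mathcal D$ are totally geodesic (Lemma~\ref{lem:D-tot-geod}) and that the restriction of $(g,A)$ to each leaf satisfies the \emph{two-dimensional} real projective-equivalence equation \eqref{eq:realProj}, so one can invoke the known Liouville/complex-Liouville normal forms from~\cite{m_alone} rather than integrating from scratch. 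In the degenerate cases the frame has to be completed by carefully chosen transverse vector fields (Lemmas~\ref{campotrasverso}, \ref{lemma:gianni4}, \ref{lem.int.1}), and it is this construction --- not a brute-force PDE integration --- that makes the remaining computations finite. Your sketch mentions none of these geometric ingredients, and without them the ``rather long integrations'' you allude to have no obvious starting point.
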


\begin{cor}
Let $g$ and $\hat{g}$ be two pc-projectively equivalent metrics on the same \pk surface. Then either there exists a system of coordinates $(x^1,x^2,x^3,x^4)$ where  $g$ assumes one of the form listed in Theorem \ref{thm:main} and $\hat{g}$ the form 
\eqref{eq: companion.A} or they are affinely equivalent.
\end{cor}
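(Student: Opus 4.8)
The plan is to reduce the statement to Theorem~\ref{thm:main} by analyzing the Benenti tensor attached to the pair $(g,\hat g)$. Given two pc-projectively equivalent \pk metrics $g$ and $\hat g$ on the same \pk surface $(M,\T)$, I would first form the tensor
\begin{equation*}
A:=\left(\frac{\det\hat g}{\det g}\right)^{\frac{1}{2(n+1)}}\hat g^{-1}g
\end{equation*}
as in \eqref{A.def}, which is well defined since \pk metrics have neutral signature. By the properties established earlier in the paper, $A$ is a non-degenerate element of $\mathcal A(g,\T)$ (it is $g$-symmetric, commutes with $\T$, and satisfies \eqref{eq.A}), hence a Benenti tensor of $(g,\T)$; moreover inverting \eqref{A.def} gives exactly $\hat g=(\det A)^{-1/2}gA^{-1}$, i.e.\ $\hat g$ is the companion metric \eqref{eq: companion.A} associated to $A$.

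Next I would split according to whether $A$ is parallel with respect to $\nabla^g$. If $\nabla^g A=0$, then by the computation in the excerpt relating $\nabla^{\hat g}g$ to $\nabla^{\hat g}A$, the Levi-Civita connections of $g$ and $\hat g$ coincide, so $g$ and $\hat g$ are affinely equivalent, which is the second alternative in the statement. If $\nabla^g A\neq 0$, then $A$ is a non-parallel Benenti tensor of $(g,\T)$, so Theorem~\ref{thm:main} applies: in a neighborhood of almost every point of $M$ there is a coordinate system $(x^1,x^2,x^3,x^4)$ in which the triple $(g,\T,A)$ takes one of the listed normal forms. In those coordinates $g$ has the stated form and, since $\hat g=(\det A)^{-1/2}gA^{-1}$ is determined algebraically by $g$ and $A$, it is exactly the metric \eqref{eq: companion.A}, giving the first alternative. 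This establishes the dichotomy.

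The only delicate point is the phrase ``almost every point'': Theorem~\ref{thm:main} yields its normal forms only on the dense open set where the relevant scalar invariants of $A$ (in particular the rank of the distribution $D$ in \eqref{eq:D} and the multiplicities of the eigenvalues $\rho,\sigma$) are locally constant and the generic regularity assumptions hold. I would therefore note that the set of ``bad'' points is closed and nowhere dense, so that on its complement the above argument goes through verbatim, which is precisely the content of the Corollary. No genuinely new computation is needed beyond invoking Theorem~\ref{thm:main} and the already-established equivalence between non-degenerate elements of $\mathcal A(g,\T)$ and metrics of the form \eqref{eq: companion.A}.
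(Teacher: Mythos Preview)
Your proposal is correct and follows exactly the approach the paper intends: the corollary is stated as an immediate consequence of Theorem~\ref{thm:main} together with the correspondence between Benenti tensors and companion metrics via \eqref{A.def}--\eqref{eq: companion.A}, and the characterization of affine equivalence as parallelism of $A$. No additional argument is needed, and your handling of the ``almost every point'' caveat matches the paper's use of the dense open set $\widehat M^{\circ}$.
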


\begin{cor}
As a by-product, the results of Theorem \ref{thm:main} straightforwardly provide normal forms of Hamiltonian $2$-forms in the \pk setting, see discussions around formula \eqref{eq:formula.Apos}.
\end{cor}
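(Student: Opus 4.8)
The statement is a direct translation of Theorem \ref{thm:main} via the correspondence discussed around \eqref{eq:formula.Apos}, so the plan is simply to make that translation explicit. Recall that to a \pk structure $(g,\T)$ and a tensor $A\in\mathcal{A}(g,\T)$ one associates the $2$-form $\phi=g(A\T\cdot\,,\cdot)$, and that equation \eqref{eq.A} for $A$ is equivalent to \eqref{eq:formula.Apos} for $\phi$, the latter being exactly the defining equation of a Hamiltonian $2$-form on $(M,g,\T)$. This assignment is invertible: starting from a $2$-form $\phi$ that is $\T$-invariant in the same sense as $\omega$ (i.e.\ $\phi(\T\cdot\,,\T\cdot)=-\phi(\cdot\,,\cdot)$), one recovers a $(1,1)$-tensor $A$ by raising an index with $g^{-1}$ and composing with $\T$, using $\omega=g(\T\cdot\,,\cdot)$ and $\T^2=\Id$; the $g$-symmetry of $A$ and the relation $[A,\T]=0$ translate precisely into the skew-symmetry and $\T$-invariance of $\phi$. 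Hence, up to local equivalence, triples $(g,\T,A)$ with $A\in\mathcal{A}(g,\T)$ correspond bijectively to triples $(g,\T,\phi)$ consisting of a \pk surface together with a Hamiltonian $2$-form, and under this correspondence $A$ is non-degenerate (resp.\ $\nabla^g$-parallel) if and only if $\phi$ is non-degenerate (resp.\ $\nabla^g$-parallel), since $g$ and $\T$ are invertible and $\nabla^g$-parallel.

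Granting this, the proof consists in substituting each of the triples $(g,\T,A)$ listed in Theorem \ref{thm:main} into $\phi=g(A\T\cdot\,,\cdot)$ and simplifying the result in the coordinates $(x^1,x^2,x^3,x^4)$ supplied by the theorem. Carrying this out case by case --- the real and complex Liouville families in the non-degenerate case $\dim D=4$, and the families in the degenerate cases $\dim D=2$ and $\dim D=1$ --- produces the announced list of normal forms of Hamiltonian $2$-forms; by the previous paragraph this list is exhaustive among the Hamiltonian $2$-forms whose associated $A$ is non-degenerate and not $\nabla^g$-parallel (the parallel case being trivial), and the companion $2$-form on the $\hat g$-side is obtained in the same way from the metric \eqref{eq: companion.A}. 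No computation beyond this substitution is required.

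The only delicate point, and it is a minor one, is conventional: one must check that equation \eqref{eq:formula.Apos}, read through the standard dictionary between \K and \pk geometry ($\T$ in place of $J$, para-complex numbers, neutral signature), coincides with the definition of Hamiltonian $2$-form adopted in \cite{acg.hamiltonian.1,acg.hamiltonian.2,acg.hamiltonian.3,acg.hamiltonian.4,acg.self.dual}, and one must keep careful track of the non-degeneracy and parallelism hypotheses of Theorem \ref{thm:main} in order to state precisely which class of Hamiltonian $2$-forms is thereby classified.
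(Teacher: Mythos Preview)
Your proposal is correct and matches the paper's intent: the paper does not give an explicit proof of this corollary at all, treating it as an immediate consequence of the equivalence between \eqref{eq.A} and \eqref{eq:formula.Apos} established earlier, and you have simply spelled out that translation carefully. Your added remarks on the bijectivity of $A\leftrightarrow\phi$ and on tracking the non-degeneracy and non-parallelism hypotheses are accurate elaborations of what the paper leaves implicit.
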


\begin{rem}
It is worth noting that several of the degenerate normal forms listed in Theorem \ref{thm:main} give rise to the same metric up to local isometries. For example, the second, third and fourth families in the case $\dim D = 2$  are  locally isometric to the standard flat metric with neutral signature.
Their presence is not accidental. Indeed, our classification concerns the triples $(g,\T,A)$ rather than the metric $g$ alone.

\end{rem}
\subsubsection{Differences with the \K case and new strategies}\label{sec.analogie}
The variety of normal forms arising in the \pk setting is richer than in the  \K one. Indeed, while the non-degenerate situation $\dim D = 4$ and the first family in the case $\dim D = 2$ admit direct counterparts in the \K classification (cf. \cite[Theorem 3.1]{bmmr}), none of the other degenerate configurations occurring in Theorem \ref{thm:main}  has a \K analogue.
This discrepancy reflects intrinsic algebraic differences between the para-complex and complex structures. In particular, phenomena such as isotropic gradients of the eigenvalues of Benenti tensors and the appearance of odd-dimensional ranks for the distribution $D$, have no counterpart in the \K context. These structural reasons, discussed more in details in Remark \ref{rem:kahler.contrast}, explain why the \pk theory naturally exhibits a broader spectrum of local models.  Consequently, one cannot rely on what is known in the \K setting to guide the analysis in the \pk case. For instance, the construction of local frames adapted to $D$ in the degenerate cases, offers a particularly illustrative example of a situation where techniques from the \K case cannot be adapted.
Even though certain aspects of the analysis may still be guided by the conceptual analogy, without however becoming straightforward as a consequence, these analogies nevertheless remain insufficient to fully capture the phenomena arising in the \pk setting. The latter ultimately requires additional arguments specifically tailored to its own structural features.

\subsection{Structure of the paper}
%

\medskip
In Section \ref{sec:pc-invariant} we show that the metrizability problem, raised in Section \ref{sec:motivations}, though apparently non-linear, actually reduces to the solvability of a pc-projectively invariant linear system of PDEs. Consequently, being the space of its solutions a vector space,  one can meaningfully speak of its dimension (called \emph{degree of mobility}, cf. Section \ref{sec:motivations}). By expressing the aforementioned system in terms of Benenti tensors, we obtain equation \eqref{eq.A}. This viewpoint provides the conceptual foundation for several results developed in the paper.

%

\smallskip
In Section \ref{sec:killing} we construct canonical Killing vector fields naturally associated with tensors $A\in\mathcal A(g,\mathcal T)$.  These vector fields, obtained from scalar invariants of $A$, constitute the first step towards the construction of the coordinate systems where the normal forms of Theorem \ref{thm:main} will be expressed.  A key result of the section is a formula describing the difference of the Ricci tensors of two pc-projectively equivalent metrics, an identity that will play a central role also in the proof of Theorem \ref{thm:main.einstein}.

\smallskip
In Section \ref{sec:pcEinstein} we completely dedicate our attention to the proof of Theorem \ref{thm:main.einstein}.

\smallskip
In Sections \ref{sec:preliminary}, \ref{sec:proof} and \ref{sec:Einstein.4.dim} we deal with results specific for \pk surfaces, in contrast with those of the previous sections which apply in arbitrary dimension.

\smallskip
In Section \ref{sec:preliminary} we describe the dense open subset where Theorem \ref{thm:main} holds and we establish the precise criteria, summarized in Table \ref{tab:ranks} at page \pageref{tab:ranks}, for distinguishing the various types of normal forms.  We also study, from the symplectic viewpoint, the canonical Killing vector fields constructed in Section 3 together with their images under the para-complex structure, showing that they are also para-holomorphic. This analysis plays a crucial role in the construction of the local coordinates used in Section \ref{sec:proof}.

\smallskip
Indeed, in Section \ref{sec:proof} we focus our attention mainly to the proof of Theorem \ref{thm:main}.  The different cases leading to the normal forms of Theorem \ref{thm:main}
are analyzed separately in different subsections. For each case, we construct a distinguished set of four commuting vector fields that yield suitable local coordinates where the equation \eqref{eq.A} is explicitly studied.
Some vector fields are chosen within the canonical Killing vector fields introduced in Section \ref{sec:killing} together with their images under the \pk structure; in some situations, this choice needs to be completed by additional vector fields, which are discussed case by case in the relevant subsections.

\smallskip
Finally, in Section \ref{sec:Einstein.4.dim} we determine when a metric $g$ occurring in Theorem \ref{thm:main} is Einstein and when the associated metric $\hat g$ given by
\eqref{eq: companion.A} is Einstein simultaneously to $g$. This way, as a by-product of Theorem \ref{thm:main.einstein}, we obtain a $2$-parametric family of para-K\"ahler-Einstein metrics admitting a non-parallel Benenti tensor.

%

\section{Pc-projectively invariant equation, metrizability problem and Benenti tensors}\label{sec:pc-invariant}
 
In Section \ref{sec:nablaXsigma} we show that the metrizability problem, i.e., the problem to find a Levi-Civita connection $\nabla^g$ within a given pc-projective class of connections $[\nabla]$, can be stated in terms of existence of solutions of a pc-projectively invariant linear system of PDEs, equation \eqref{eq:pcequivglob}, having a $(2,0)$ weighted tensor field as unknown, that we denote by $\sigma$.

\smallskip
In Section \ref{sec:nablaXA}, assuming the metrizability of the pc-projective class,
we rephrase the aforementioned linear system in terms of certain $(1,1)$-tensor fields, finally obtaining equation \eqref{eq.A}, that we shall prove to constitute the set $\mathcal{A}(g,\T)$.

\subsection{Pc-projectively invariant equation in $(2,0)$ weighted tensors}\label{sec:nablaXsigma}

Let $\nabla$ and $\widehat\nabla$ be two pc-projectively equivalent connections on a para-complex manifold $(M,\T)$.
By comparing the corresponding covariant derivatives, one straightforwardly finds that their difference can be expressed in terms of a suitable $1$-form $\Psi$ as
\begin{equation}\label{eq.nabladifference}
\widehat\nabla_X Y - \nabla_X Y
= \Psi(X)Y + \Psi(Y)X + \Psi(\T X)\T Y + \Psi(\T Y)\T X\,.
\end{equation}
In local coordinates, the previous equation takes the form
\begin{equation}\label{eq:tildeG-G}
\widehat\Gamma^k_{ij} - \Gamma^k_{ij}
= \Psi_i \delta^k_j + \Psi_j \delta^k_i
  + \Psi_p \T^p_i \T^k_j + \Psi_p \T^p_j \T^k_i\,,
\end{equation}
where $\Gamma^k_{ij}$ and $\widehat\Gamma^k_{ij}$ denote the Christoffel symbols of $\nabla$ and $\widehat\nabla$, respectively, and $\Psi_i$ are the local components of the $1$-form $\Psi$.
Conversely, if \eqref{eq.nabladifference} holds, then $\nabla$ and $\widehat\nabla$ are pc-projectively equivalent; a detailed justification of this fact is given in the proof of Proposition \ref{prop:sigma-metric} below.

\smallskip
We denote by 
$$\mathcal{S}^2_{\T} TM 
= \{\eta \in \Gamma(S^2 TM) \;:\; (\T\otimes\T)\, \eta=-\eta\}$$
the set of the sections of  the bundle of para-Hermitian symmetric $(2,0)$-tensors.  In local coordinates, $\eta\in \mathcal{S}^2_{\T} TM$ if and only if
 \begin{equation}\label{sigma.paraHerm}
\T^j_p \eta^{pk} =- \eta^{jp} \T^k_p.\end{equation}
Then, we  consider a section $\sigma$ belonging to $$
\mathcal{S}^2_{\T} TM \otimes\Gamma\left(\mathrm{Vol}^{\frac{1}{n+1}}\right)\,,$$
where
 $\mathrm{Vol}^{\frac{1}{n+1}}$  denotes the space of tensor fields $\Lambda^{2n}\,T^*M$  with weight $\frac{1}{n+1}$. 
In local coordinates,  
\begin{equation*}
\sigma = \sigma^{jk} (\partial_j \otimes \partial_k) \otimes \mathrm{vol}^{\frac{1}{n+1}},\quad \mathrm{vol}:=dx^1\wedge\cdots\wedge dx^{2n}.
\end{equation*}
We recall, by definition of weighted tensors, that if we perform a change of coordinates $x\to y(x)$, denoting $J$ its Jacobian, then the new components of $\sigma$ are
$$
\tilde  \sigma^{km} =
  |\det J\, |^{-\frac{1}{n+1}}\sigma^{ij}J^k_i J^m_j.
$$
Moreover, the covariant derivative $\nabla \sigma$ reads locally as
\begin{equation}\label{nabla.sigma}
\nabla_i\sigma^{jk}
=\sigma^{jk}_{,i}+\Gamma^j_{im}\sigma^{mk} +\Gamma^k_{im} \sigma^{mj}- 
\tfrac{1}{n+1} \Gamma^p_{ip}\sigma^{jk}.
\end{equation}

\begin{prop} Let $(M,\T)$ be a para-complex manifold and  $\nabla$ be a para-complex connection on $M$. Then, the equation
\begin{equation}\label{eq:pcequivglob}
 \nabla_X \sigma- X\otimes L - L \otimes X +\T X\otimes \T L +\T L \otimes \T X=0,
\end{equation}
where $L=\tfrac{1}{2n}\nabla_l \sigma^{lk}\,\partial_k\,\otimes (\mathrm{vol})^{\frac{1}{n+1}}$,
is \emph{pc-projectively invariant}, in the sense that it does not depend on the choice of $\nabla$ in its pc-projective class $[\nabla]$. 
\end{prop}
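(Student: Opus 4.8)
The plan is to verify directly that the left-hand side of \eqref{eq:pcequivglob} is unchanged when $\nabla$ is replaced by a pc-projectively equivalent connection $\widehat\nabla$, using the explicit transformation rule \eqref{eq.nabladifference}. First I would fix the $1$-form $\Psi$ relating $\nabla$ and $\widehat\nabla$ and compute $\widehat\nabla_X\sigma - \nabla_X\sigma$ for our weighted section $\sigma\in\mathcal{S}^2_{\T}TM\otimes\Gamma(\mathrm{Vol}^{\frac{1}{n+1}})$. This splits into two contributions: the change in the Christoffel symbols in the two ``tensorial'' indices of $\sigma^{jk}$, which by \eqref{eq:tildeG-G} produces terms of the shape $\Psi_i\sigma^{jk}$, $\Psi_m\sigma^{mk}\delta^j_i$, $\Psi_p\T^p_i\T^j_m\sigma^{mk}$, $\Psi_p\T^p_m\T^j_i\sigma^{mk}$ and their $j\leftrightarrow k$ symmetrizations; and the change coming from the weight term $-\tfrac{1}{n+1}\widehat\Gamma^p_{ip}\sigma^{jk}$, where $\widehat\Gamma^p_{ip}-\Gamma^p_{ip} = \Psi_i(2n) + \Psi_p\T^p_i\T^p_p$. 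Here one uses $\T^p_p = \tr\T = 0$ (the eigenvalues $\pm1$ have equal multiplicity $n$) and $\T^p_i\T^q_p = \delta^q_i$, so that $\widehat\Gamma^p_{ip}-\Gamma^p_{ip} = 2n\,\Psi_i$; hence the weight contribution is $-\tfrac{2n}{n+1}\Psi_i\sigma^{jk}$.

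Next I would assemble all of these into a single expression for $\widehat\nabla_i\sigma^{jk} - \nabla_i\sigma^{jk}$ and, crucially, simplify the $\T$-terms using that $\sigma$ is para-Hermitian, i.e.\ \eqref{sigma.paraHerm}: $\T^j_p\sigma^{pk} = -\sigma^{jp}\T^k_p$. This identity lets me rewrite $\T^j_m\sigma^{mk} = -\sigma^{jm}\T^k_m$ and collapse the four $\T$-flavored pieces. The upshot should be that $\widehat\nabla_i\sigma^{jk} - \nabla_i\sigma^{jk}$ equals an expression built entirely from $\Psi$, $\sigma$, $\delta$ and $\T$ that is manifestly of the form $\widehat X\otimes M + M\otimes \widehat X - \T\widehat X\otimes\T M - \T M\otimes\T\widehat X$ for appropriate vector-valued quantities — in other words, the correction to $\nabla\sigma$ is exactly of the type that appears (with opposite sign) in the remaining terms of \eqref{eq:pcequivglob}.

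To close the argument I then have to check that $L$ transforms in the matching way. I would compute $\widehat L - L$ from the definition $L = \tfrac{1}{2n}\nabla_l\sigma^{lk}\,\partial_k\otimes(\mathrm{vol})^{\frac{1}{n+1}}$, using the formula for $\widehat\nabla_l\sigma^{lk} - \nabla_l\sigma^{lk}$ obtained above specialized to $i=l$ and contracted. The contraction produces terms like $\Psi_l\sigma^{lk}$, $\Psi_l\sigma^{lk}$, $\Psi_p\T^p_l\T^l_m\sigma^{mk} = \Psi_p\delta^p_m\sigma^{mk} = \Psi_m\sigma^{mk}$, and $\Psi_p\T^p_m\T^l_l\sigma^{mk} = 0$ (again $\tr\T=0$), together with the weight term; collecting them should give $\widehat L - L = \text{(const)}\cdot\sigma(\Psi)$ where $\sigma(\Psi)^k = \sigma^{km}\Psi_m$, with the constant arranged so that the substitution $L\mapsto\widehat L$ precisely absorbs the $\Psi$-correction found in $\widehat\nabla\sigma - \nabla\sigma$. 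One small point worth isolating: the change in the weight factor $\widehat\Gamma^p_{ip}$ must be handled consistently both inside $\nabla_i\sigma^{jk}$ and inside the definition of $L$, since $L$ is itself a weighted object; keeping track of the weight bookkeeping is where sign or coefficient errors are most likely to creep in.

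The main obstacle I expect is purely computational bookkeeping rather than conceptual: one must carry four $\T$-terms through two index slots plus the weight term, repeatedly apply $\T^p_i\T^k_p=\delta^k_i$, $\tr\T=0$ and the para-Hermitian condition \eqref{sigma.paraHerm}, and verify that the net correction to the full left-hand side of \eqref{eq:pcequivglob} cancels identically. There is no deep step; the risk is solely in the normalization constants $\tfrac{1}{2n}$ and $\tfrac{1}{n+1}$ conspiring correctly — which is, of course, exactly why those particular weights were chosen in the definitions of $\sigma$ and $L$.
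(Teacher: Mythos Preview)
Your approach is essentially identical to the paper's: compute $\widehat\nabla_i\sigma^{jk}-\nabla_i\sigma^{jk}$ from \eqref{eq:tildeG-G} and \eqref{nabla.sigma}, simplify via the para-Hermitian condition \eqref{sigma.paraHerm}, then contract to find how $L$ transforms and observe that the full expression \eqref{eq:pcequivglob} is unchanged.

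There is, however, one concrete computational slip that would derail your bookkeeping. You write $\widehat\Gamma^p_{ip}-\Gamma^p_{ip}=2n\,\Psi_i+\Psi_p\T^p_i\T^p_p$, keeping only two of the four terms from the contraction of \eqref{eq:tildeG-G}. The missing terms are $\Psi_p\delta^p_i=\Psi_i$ and $\Psi_p\T^p_q\T^q_i=\Psi_p\delta^p_i=\Psi_i$, so the correct trace is
\[
\widehat\Gamma^p_{ip}-\Gamma^p_{ip}=2(n+1)\,\Psi_i,
\]
and the weight contribution becomes $-2\Psi_i\sigma^{jk}$, not $-\tfrac{2n}{n+1}\Psi_i\sigma^{jk}$. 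This is precisely what is needed to cancel the two $\Psi_i\sigma^{jk}$ terms coming from the tensorial slots; with your value a spurious $\tfrac{2}{n+1}\Psi_i\sigma^{jk}$ would survive and the cancellation you anticipate would fail. Once this is fixed, your computation yields exactly the paper's formulas \eqref{nabla.sigma.calcoli} and \eqref{eq:nabla.sigma.tr}, and the invariance follows.
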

\begin{proof}
Let $\widehat\nabla$ and $\nabla$ be two pc-projectively equivalent connections.
Formulas  \eqref{eq:tildeG-G} and  \eqref{nabla.sigma} allow one to compare the covariant derivatives of $\sigma$ with respect to the  two pc-projectively equivalent connections, leading to the local equations
\begin{equation*}
\left(\widehat\nabla_i - \nabla_i\right)\,\sigma^{jk}
=\Psi_p \delta^j_i \sigma^{pk}+ \Psi_p \delta^k_i \sigma^{jp} + 
 \Psi_l \left(\T^l_p \T^j_i\sigma^{pk}
+  \T^l_p \T^k_i\sigma^{jp}\right).
\end{equation*}
Keeping in mind that $\sigma$ is para-Hermitian, see \eqref{sigma.paraHerm}, we obtain
\begin{equation}\label{nabla.sigma.calcoli}
\left(\widehat\nabla_i - \nabla_i\right)\,\sigma^{jk}
=\Psi_p \delta^j_i \sigma^{pk}+ \Psi_p \delta^k_i \sigma^{jp} -
  \T^j_i  \T^k_p\Psi_l \sigma^{lp}-     \T^k_ i\T^j_p \Psi_l\sigma^{lp},
\end{equation}
which in turn implies that
\begin{equation}\label{eq:nabla.sigma.tr}
\widehat \nabla_l\sigma^{lj}=\nabla_l \sigma^{lj}+
 2n\,\Psi_p  \sigma^{pj}\,.
\end{equation}
By using the relation just derived in formula  \eqref{nabla.sigma.calcoli}, we have that 
\begin{equation}\label{eq:pcequiv}
 \nabla_i\sigma^{jk}-\frac{1}{2n}\left(
\delta^j_i \,\nabla_l \sigma^{lk}+\delta^k_i \, \nabla_l \sigma^{lj}-
  \T^j_i  \T^k_p\,\nabla_l \sigma^{lp}-     \T^k_ i \T^j_p\, \nabla_l \sigma^{lp} \right)
\end{equation}
does not depend  on the choice of $\nabla$ in the pc-projective class $[\nabla]$.
This is precisely the coordinate expression of the left-hand side of equation \eqref{eq:pcequivglob}.
\end{proof}

\smallskip 
To test the existence of a \pk metric in a given pc-projective class, we can look for a non-degenerate solution $\sigma$ of equation \eqref{eq:pcequivglob}.  
The precise correspondence between \pk metrics and such solutions is written more explicitly in the following proposition. 
\begin{prop}\label{prop:sigma-metric}
Let $(M,\mathcal{T})$ be a para-complex manifold and $[\nabla]$ a pc-projective class of para-complex connections on $M$.  
Then there exists a \pk metric $g$ whose Levi-Civita connection $\nabla^g$ belongs to $[\nabla]$ if and only if the  pc-projectively invariant equation \eqref{eq:pcequivglob}  admits a solution whose matrix $(\sigma^{ij})$ is non-degenerate at every point. 
In this case, $g$ and $\sigma$ are related by
\begin{equation}\label{g.sigma}
g^{ij} = |\det\sigma|^{\frac{1}{2}}\, \sigma^{ij},\qquad\quad \sigma^{ij} = |\det g|^{\frac{1}{2(n+1)}}\, g^{ij}.\end{equation}
\end{prop}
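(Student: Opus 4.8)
The plan is to establish the equivalence by exhibiting the two directions of the biconditional and, in each case, verifying that the correspondence \eqref{g.sigma} does what is claimed. First I would address the necessity: suppose $g$ is a \pk metric with $\nabla^g \in [\nabla]$. The para-Hermitian condition $g(\T\cdot,\T\cdot) = -g(\cdot,\cdot)$ translates, for the inverse metric, into $\T^j_p g^{pk} = -g^{jp}\T^k_p$, which is exactly the para-Hermiticity \eqref{sigma.paraHerm} required of a section of $\mathcal{S}^2_\T TM$; setting $\sigma^{ij} = |\det g|^{\frac{1}{2(n+1)}} g^{ij}$ produces a weighted $(2,0)$-tensor of the correct weight (the factor $|\det g|^{\frac{1}{2(n+1)}}$ is chosen precisely so that $\sigma$ transforms as a density of weight $\frac{1}{n+1}$, which one checks against the transformation law $\tilde\sigma^{km} = |\det J|^{-\frac{1}{n+1}}\sigma^{ij}J^k_i J^m_j$ recalled above). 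It is manifestly non-degenerate since $g$ is. The key computation is then that $\sigma$ solves \eqref{eq:pcequivglob} when the covariant derivative is taken with respect to $\nabla^g$ itself: since $\nabla^g g = 0$, one gets $\nabla^g_i \sigma^{jk} = \sigma^{jk}\cdot\nabla^g_i(\text{density factor})$-type terms that collapse because the weighted covariant derivative \eqref{nabla.sigma} of the metric density vanishes; more directly, $\nabla^g_l \sigma^{lk} = 0$ as well, so $L = 0$ and \eqref{eq:pcequivglob} reduces to $\nabla^g_X\sigma = 0$, which holds. Because the left-hand side of \eqref{eq:pcequivglob} is pc-projectively invariant, it vanishes for every $\nabla$ in $[\nabla]$, not only for $\nabla^g$.

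For the sufficiency, suppose $\sigma$ solves \eqref{eq:pcequivglob} with $(\sigma^{ij})$ non-degenerate everywhere. Define $g^{ij} = |\det\sigma|^{1/2}\sigma^{ij}$ and let $g_{ij}$ be its inverse; one must check this is a genuine \pk metric and that $\nabla^g \in [\nabla]$. Para-Hermiticity of $g$ follows from that of $\sigma$, since multiplication by the scalar density $|\det\sigma|^{1/2}$ preserves the algebraic identity \eqref{sigma.paraHerm}; in particular $g$ is compatible with $\T$ and, as explained in the introduction, automatically has neutral signature with $T^\pm$ isotropic, and its fundamental $2$-form $\omega(\cdot,\cdot)=g(\T\cdot,\cdot)$ is closed because — and here is the point where \eqref{eq:pcequivglob} is used — the equation \eqref{eq:pcequivglob} is precisely the condition that forces $\nabla^g$ to be para-complex and $\omega$ to be parallel, hence symplectic. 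Concretely, I would compute $\nabla_X g$ for the Levi-Civita-type connection built from $g$, express $\sigma$ back in terms of $g$ via the second formula in \eqref{g.sigma}, substitute into \eqref{eq:pcequivglob}, and read off that $\nabla g$ differs from zero only by terms of the form $\Psi(X)\,g$-type corrections with a specific $\Psi$ determined by $L$; this exhibits the Christoffel symbols of $\nabla^g$ as differing from those of $\nabla$ exactly by the pattern \eqref{eq:tildeG-G}, which by the converse remark following \eqref{eq:tildeG-G} means $\nabla^g \in [\nabla]$. The $1$-form is $\Psi_i = \tfrac{1}{2(n+1)}\partial_i \log|\det\sigma|$ up to the usual identification, and one verifies it is closed, consistent with $\nabla^g$ being metric.

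The main obstacle I anticipate is the bookkeeping in the sufficiency direction: one has to be careful that the weighted trace term $L$ in \eqref{eq:pcequivglob} matches, after unweighting, exactly the trace part appearing in the projective change-of-connection formula \eqref{eq:tildeG-G}–\eqref{eq:nabla.sigma.tr}, and that the resulting $\nabla$ constructed this way is torsion-free and preserves $\T$ so that it is a legitimate para-complex connection in $[\nabla]$ and $g$ is genuinely para-Kähler rather than merely para-Hermitian. In other words, the non-trivial content is not the linear algebra of \eqref{g.sigma} but verifying that a non-degenerate solution of the single equation \eqref{eq:pcequivglob} simultaneously encodes metric compatibility, vanishing torsion, and $\nabla\T=0$; the cleanest route is to observe that contracting \eqref{eq:pcequivglob} recovers the definition of $L$ as an identity (so the equation is consistent) and then to use the identity $\nabla^{\hat\nabla}_i\sigma^{jk}-\nabla^\nabla_i\sigma^{jk}$ computed in \eqref{nabla.sigma.calcoli} in reverse, choosing $\hat\nabla = \nabla^g$, to pin down $\Psi$ and conclude. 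The para-Hermitian structure makes several would-be obstructions disappear automatically, which is why the neutral signature and isotropy of $T^\pm$ are worth invoking explicitly.
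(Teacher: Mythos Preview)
Your necessity direction is fine and matches the paper exactly: define $\sigma^{ij}=|\det g|^{1/2(n+1)}g^{ij}$, observe $\nabla^g\sigma=0$, hence $L=0$ and \eqref{eq:pcequivglob} holds trivially for $\nabla^g$ and therefore for every $\nabla\in[\nabla]$ by invariance.

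For sufficiency, your route and the paper's diverge. You propose to build $g$ from $\sigma$, form its Levi-Civita connection $\nabla^g$, and then verify that $\nabla^g-\nabla$ has the shape \eqref{eq:tildeG-G} and that $\nabla^g\T=0$. The paper instead works in the opposite order: starting from an arbitrary $\nabla\in[\nabla]$, it \emph{defines} $\widehat\nabla$ via \eqref{eq:tildeG-G} with the specific choice
\[
\Psi_i=-\frac{1}{2n}\,\sigma_{im}\nabla_l\sigma^{ml},
\]
so that $\widehat\nabla$ is automatically para-complex and in $[\nabla]$; then the trace identity \eqref{eq:nabla.sigma.tr} gives $\widehat\nabla_l\sigma^{lj}=0$, whence the invariant equation \eqref{eq:pcequiv} for $\widehat\nabla$ collapses to $\widehat\nabla_k\sigma^{ij}=0$, and therefore $\widehat\nabla g^{ij}=0$. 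Torsion-freeness plus $\widehat\nabla g=0$ force $\widehat\nabla=\nabla^g$, and since $\widehat\nabla\T=0$ was built in, the para-K\"ahler condition drops out for free.

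This is exactly the point where your proposal is soft: you assert that \eqref{eq:pcequivglob} ``is precisely the condition that forces $\nabla^g$ to be para-complex and $\omega$ to be parallel,'' and you flag this as the main obstacle, but you do not indicate how you would actually show $\nabla^g\T=0$ starting from the Levi-Civita connection of a merely para-Hermitian $g$. The paper's construction sidesteps the issue entirely by proving $\widehat\nabla\T=0$ \emph{before} identifying $\widehat\nabla$ with $\nabla^g$. Also, your formula $\Psi_i=\tfrac{1}{2(n+1)}\partial_i\log|\det\sigma|$ is not the one that makes the argument close; the correct $\Psi$ is the trace expression above (your formula would only be right if $\nabla$ itself were already a Levi-Civita connection). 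Your approach can be made to work, but it requires an extra computation you have not supplied; the paper's choice of $\Psi$ is the trick that makes everything automatic.
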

\begin{proof}
We first prove the sufficiency.  
Assume that $\sigma$ is a non-degenerate solution of equation \eqref{eq:pcequivglob}, and define the tensor field $g^{ij}$ according to the first equality in \eqref{g.sigma}.
Its inverse $g_{ij}$ defines a pseudo-Riemannian metric $g$ satisfying
$$g(\T X,\T Y)=-\,g(X,Y),$$
so that $(M,g,\T)$ is a para-Hermitian manifold. 

\smallskip
We now construct a connection $\widehat\nabla$ that will turn out to be the Levi-Civita connection $\nabla^g$ of $g$ and that belongs to the pc-projective class $[\nabla]$.  
Let $\widehat\nabla$ be the connection whose Christoffel symbols are related to those of $\nabla$ by \eqref{eq:tildeG-G} with
\begin{equation}\label{psi.def}
\Psi_i=-\frac{1}{2n}\,\sigma_{im}\,\nabla_l\sigma^{ml}.
\end{equation}
From formula \eqref{eq.nabladifference}, one sees that $\widehat\nabla$ is torsion-free whenever $\nabla$ is so.  
Moreover, the same relation shows that $\T$ is $\widehat\nabla$-parallel: indeed, for all vector fields $X,Y$,
\begin{align*}
\widehat\nabla_X(\T Y)
&=\nabla_X(\T Y)
+\Psi(X)\T Y+\Psi(Y)\T X
+\Psi(\T X)Y+\Psi(\T Y)X\\
&=\T\bigl(\nabla_X Y+\Psi(X)Y+\Psi(Y)X
+\Psi(\T X)\T Y+\Psi(\T Y)\T X\bigr)
=\T \bigl(\widehat\nabla_X Y\bigr),
\end{align*}
so that $\widehat\nabla\T=0$.  
Hence $\widehat\nabla$ is a para-complex connection pc-projectively related to $\nabla$.

\smallskip
Since equation \eqref{eq:pcequivglob} is pc-projectively invariant, we can perform computations  by considering any representative of the class $[\nabla]$.  
Using the local expression \eqref{eq:pcequiv} of \eqref{eq:pcequivglob} and taking into account \eqref{eq:nabla.sigma.tr} with $\Psi$ given by \eqref{psi.def}, one readily checks that
$$\widehat\nabla_k\sigma^{ij}=0.$$
Hence, 
$$
\widehat\nabla_k\!\left(\sigma^{ij}\,|\det\sigma|^{\frac12}\right)=0\,.
$$
By \eqref{g.sigma}, this shows that $\widehat\nabla g^{ij}=0$, i.e., $\widehat\nabla$ is the Levi-Civita connection of $g$.  
Therefore $\widehat\nabla=\nabla^g$, completing the proof of the sufficiency.

\smallskip
Now, we prove the necessary condition. Assume that $g$ is a \pk metric whose Levi-Civita connection $\nabla^g$ belongs to the pc-projective class $[\nabla]$. Define $\sigma$ according to the second equality in \eqref{g.sigma}.
Since $\nabla^g \,g^{ij}=0$, it follows that $\nabla^g\,\sigma^{ij}=0$.  
Hence $\sigma$ is a non-degenerate, para-Hermitian solution of \eqref{eq:pcequivglob} for $\nabla=\nabla^g$, which proves the necessary condition.
\end{proof}

\begin{cor}\label{cor:lin.comb.metrics}
If $\sigma$ and $\hat{\sigma}$ are solutions to equation \eqref{eq:pcequivglob}, also $\alpha\sigma + \beta\hat{\sigma}$, $\alpha,\beta\in\R$, is a solution as \eqref{eq:pcequivglob} is a linear system of PDEs. In the case that both $\sigma$ and $\hat{\sigma}$ are non-degenerate, metrics corresponding to  $\alpha\sigma + \beta\hat{\sigma}$ via formulas \eqref{g.sigma} are exactly \eqref{eq:lin.comb.metrics}, that then turn out to be projectively equivalent each other.
\end{cor}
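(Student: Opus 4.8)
The first assertion calls for essentially no argument: the left–hand side of \eqref{eq:pcequivglob} is an $\R$‑linear expression in $\sigma$, since $\nabla_X$ is linear and $L$ is, up to the constant $\tfrac{1}{2n}$, the $\nabla$‑divergence $\nabla_l\sigma^{lk}$ of $\sigma$ and hence linear in $\sigma$ as well; moreover the para‑Hermitian constraint \eqref{sigma.paraHerm} and membership in $\mathcal{S}^2_{\T} TM\otimes\Gamma(\mathrm{Vol}^{\frac{1}{n+1}})$ are themselves linear conditions. Thus the solution set of \eqref{eq:pcequivglob} is a vector subspace, and I would simply observe that $\alpha\sigma+\beta\hat\sigma$ lies in it whenever $\sigma,\hat\sigma$ do.

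For the second assertion my plan is as follows. Assuming $\sigma$ and $\hat\sigma$ non‑degenerate, Proposition~\ref{prop:sigma-metric} provides \pk metrics $g,\hat g$ related to them through \eqref{g.sigma}, with $\nabla^g$ and $\nabla^{\hat g}$ both lying in the common pc‑projective class; in particular $g$ and $\hat g$ are pc‑projectively equivalent, so the right‑hand side of \eqref{eq:lin.comb.metrics} is meaningful. Reading the second equality in \eqref{g.sigma} in matrix form gives $\sigma=|\det g|^{\frac{1}{2(n+1)}}g^{-1}$ and $\hat\sigma=|\det\hat g|^{\frac{1}{2(n+1)}}\hat g^{-1}$, so that the weighted $(2,0)$‑tensor $\tau:=\alpha\sigma+\beta\hat\sigma$ is exactly the quantity inside the parentheses in \eqref{eq:lin.comb.metrics}. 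Then, wherever $\tau$ is non‑degenerate — an open condition, hence satisfied for almost every $(\alpha,\beta)$ near almost every point — the first equality in \eqref{g.sigma} applied to $\tau$ gives $\tilde g^{-1}=|\det\tau|^{\frac12}\tau$ for the metric $\tilde g$ associated with $\tau$, that is $\tilde g=|\det\tau|^{-\frac12}\tau^{-1}$, which I would then recognize as precisely \eqref{eq:lin.comb.metrics}.

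To conclude, since $\tau$ is a non‑degenerate solution of the pc‑projectively invariant equation \eqref{eq:pcequivglob} relative to the same class, Proposition~\ref{prop:sigma-metric} applied to the pair $(\tilde g,\tau)$ yields $\nabla^{\tilde g}\in[\nabla^g]=[\nabla^{\hat g}]$, i.e. $\tilde g$ is pc‑projectively equivalent to both $g$ and $\hat g$. There is no genuine obstacle here; the only step deserving a little attention is the bookkeeping with the weight $\tfrac{1}{n+1}$ and with the $2n$‑dimensional determinant factors relating $\sigma$ to $g$, in particular keeping track of signs — which is exactly why the statement is to be read only where the denominator of \eqref{eq:lin.comb.metrics} does not vanish, and why the absolute values in \eqref{g.sigma} are harmless for \pk metrics (whose $\det g$ has a fixed sign).
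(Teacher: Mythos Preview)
Your proposal is correct and follows precisely the reasoning the paper intends: the corollary is stated without a separate proof, relying on the linearity of \eqref{eq:pcequivglob} (which the statement itself mentions) together with the correspondence \eqref{g.sigma} from Proposition~\ref{prop:sigma-metric}. Your write-up simply makes explicit what the paper leaves implicit, including the careful observation that $\tau=\alpha\sigma+\beta\hat\sigma$ is exactly the parenthesized expression in \eqref{eq:lin.comb.metrics} and that applying the first equality of \eqref{g.sigma} to $\tau$ recovers the displayed metric.
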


\subsection{Equation governing pc-projectively equivalent \pk metrics }\label{sec:nablaXA}

Here we specialize the results of previous section in the case that the metrizability problem has a positive answer, hence we start from a \pk manifold $(M,g,\T)$. In what follows $\sigma$ is given by \eqref{g.sigma}.

\begin{prop}\label{prop:A-equation}
Let $(M,g,\T)$ be a \pk manifold. 
Let  $\hat\sigma$ be an arbitrary solution (possibly degenerate)  of the  pc-projectively invariant equation \eqref{eq:pcequivglob} with $\nabla=\nabla^g$. Then, the tensor field 
\begin{equation}\label{eq:Adef}
A = \hat\sigma\, \sigma^{-1}
= |\det g|^{-\frac{1}{2(n+1)}}\,\hat\sigma\,g\,
\end{equation}
is  $g$-symmetric, commutes with $\T$ and satisfies equation \eqref{eq.A}.
Such tensors constitute precisely the set $\mathcal{A}(g,\T)$.
\end{prop}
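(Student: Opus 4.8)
The plan is to derive equation \eqref{eq.A} from the pc-projectively invariant equation \eqref{eq:pcequivglob} by a direct index-lowering computation, starting from the defining relation \eqref{eq:Adef}. First I would record the algebraic properties of $A$: since both $\hat\sigma$ and $\sigma$ are para-Hermitian symmetric $(2,0)$-tensors (the latter coming from the \pk metric $g$ via \eqref{g.sigma}), the endomorphism $A=\hat\sigma\,\sigma^{-1}$ is automatically $g$-symmetric — because $g_{ij}A^j_k = |\det g|^{-\frac{1}{2(n+1)}}\hat\sigma^{jp}g_{pk}g_{ij}$ is symmetric in $i,k$ once one uses $g^{ij}=|\det g|^{-\frac{1}{2(n+1)}}\sigma^{ij}$ — and commutes with $\T$ because, writing the para-Hermitian conditions \eqref{sigma.paraHerm} for $\hat\sigma$ and for $\sigma$, one gets $\T A = \hat\sigma(\T\circ\sigma^{-1}) = \hat\sigma\sigma^{-1}\T = A\T$ (here $\T$ acting on $\sigma^{-1}$ on one side equals $\T$ on the other side with a sign that cancels). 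This part is routine linear algebra.

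The main computation is to transform \eqref{eq:pcequivglob} into \eqref{eq.A}. Here I would specialize \eqref{eq:pcequivglob} to $\nabla=\nabla^g$, so that $\nabla^g\sigma=0$ and $\sigma$ itself is a (non-degenerate) solution with $L=0$; writing \eqref{eq:pcequivglob} for $\hat\sigma$ gives
\[
\nabla^g_X\hat\sigma - X\otimes \hat L - \hat L\otimes X + \T X\otimes\T\hat L + \T\hat L\otimes\T X = 0,
\]
where $\hat L=\tfrac{1}{2n}\nabla^g_l\hat\sigma^{lk}\,\partial_k\otimes(\mathrm{vol})^{\frac{1}{n+1}}$. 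Since $A=\hat\sigma\,\sigma^{-1}$ with $\sigma^{-1}$ parallel (it is a constant multiple of $g$ in the weighted sense), we have $\nabla^g_X A = (\nabla^g_X\hat\sigma)\,\sigma^{-1}$, so applying $\sigma^{-1}$ on the right of the displayed equation converts the $(2,0)$-tensor equation into a $(1,1)$-tensor equation for $A$. Under this operation $\hat L\otimes X$ becomes $\Lambda\otimes X^\flat$-type terms once one lowers the free index of $X$ with $g$ (turning $X$ into $X^\flat$) and identifies the vector $\Lambda := \tfrac14\grad\tr A$; the factor reconciliation requires checking that $\tfrac{1}{2n}\nabla^g_l\hat\sigma^{lk}\,\sigma^{-1}$ equals $\tfrac14 g^{km}\partial_m\tr A$, i.e.\ that the ``divergence of $\hat\sigma$'' is (up to a scalar) the gradient of $\tr A$. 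I expect this identity — tracing $\nabla^g_X A = $ (RHS of \eqref{eq.A}) and solving for $\Lambda$ in terms of $\nabla^g\tr A$ — to be the step requiring the most care, since it is exactly where the weight $\tfrac{1}{n+1}$ and the combinatorial constants must fall into place; it is the para-complex analogue of the corresponding verification in the \K case (cf.\ \cite[equation (2.4)]{bmmr}), and the computation there can guide the bookkeeping here.

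For the converse inclusion — that every $g$-symmetric, $\T$-commuting solution $A$ of \eqref{eq.A} arises as $\hat\sigma\,\sigma^{-1}$ for some solution $\hat\sigma$ of \eqref{eq:pcequivglob} — I would simply define $\hat\sigma := A\,\sigma$, check that the algebraic hypotheses on $A$ make $\hat\sigma$ para-Hermitian symmetric (reversing the linear-algebra argument above), and then run the index computation backwards: \eqref{eq.A} for $A$ together with $\nabla^g\sigma=0$ yields \eqref{eq:pcequivglob} for $\hat\sigma$, with $\hat L$ automatically matching $\Lambda$ via the trace identity already established. This shows the set of all such $A$ is precisely $\{\hat\sigma\,\sigma^{-1} : \hat\sigma \text{ solves \eqref{eq:pcequivglob}}\}$, which by definition \eqref{eq:A.cal} is $\mathcal A(g,\T)$. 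The only subtlety worth flagging is to make sure the correspondence $\hat\sigma\leftrightarrow A$ is genuinely a bijection at the level of equations (not just of algebraic tensors), which follows because $\sigma$ is non-degenerate and parallel, so multiplication by $\sigma^{\pm1}$ commutes with $\nabla^g$ and is invertible pointwise.
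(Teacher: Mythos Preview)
Your proposal is correct and follows essentially the same route as the paper: use $\nabla^g\sigma=0$ to get $\nabla^g_XA=(\nabla^g_X\hat\sigma)\,\sigma^{-1}$, apply $\sigma^{-1}$ to \eqref{eq:pcequivglob} to obtain the form of \eqref{eq.A} with an a priori unknown $\Lambda$, and then identify $\Lambda=\tfrac14\grad\tr A$ by tracing; the converse via $\hat\sigma:=A\sigma$ is exactly what the paper does in the paragraph following the proof. The only stylistic difference is that the paper first writes the equation with an unspecified $\Lambda$ and then traces, whereas you propose to verify the divergence identity directly---these amount to the same computation.
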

\begin{proof}
From the definition \eqref{eq:Adef}, it is immediate to see that $A$ is $g$-symmetric and commutes with $\T$.  

\noindent
Now, we prove that $A$ satisfies \eqref{eq.A}.
Since $g$, and hence $\sigma$, are parallel for the Levi-Civita connection $\nabla^g$, we have $\nabla^g\sigma^{-1}=0$.  
Therefore,  for any vector field  $X$, we have
$$\nabla^g_X A=(\nabla^g_X\hat\sigma)\,\sigma^{-1}.$$
By assumption, $\hat\sigma$ satisfies the pc-projectively invariant equation \eqref{eq:pcequivglob}. By inserting that expression into the previous equality, we obtain
$$
\nabla^g_X A
= \bigl(X^\flat\otimes L + L^\flat\otimes X
-(\mathcal{T}X)^\flat\otimes\mathcal{T}L
-(\mathcal{T}L)^\flat\otimes\mathcal{T}X\bigr)\,\sigma^{-1}\,.
$$ 
Hence, there exists a vector field $\Lambda$ (depending on $L$ and $\sigma^{-1}$) such that the above equality can be written as
$$
\nabla^g_X A
= X^\flat\otimes\Lambda+\Lambda^\flat\otimes X
-(\mathcal{T}X)^\flat\otimes\mathcal{T}\Lambda
-(\mathcal{T}\Lambda)^\flat\otimes\mathcal{T}X.
$$
It remains to identify $\Lambda$.  Taking the trace of the last equality, yields
$$
X(\tr A)=4\,g(\Lambda,X)\quad\text{for any }X.
$$
Therefore, $\Lambda=\tfrac14\grad\tr A$, and the proposition follows.
\end{proof}
To sum up, if $\hat\sigma$ is a para-Hermitian solution of the pc-projectively invariant equation \eqref{eq:pcequivglob}, then the tensor $A$ defined by \eqref{eq:Adef} satisfies equation \eqref{eq.A}. Hence, any tensor obtained in this way belongs to $\mathcal{A}(g,\T)$, as proved by Proposition \ref{prop:A-equation}.  Conversely, every $A\in\mathcal{A}(g,\T)$ can be written as $A=\hat\sigma\,\sigma^{-1}$ for some para-Hermitian solution $\hat\sigma$ of the pc-projectively invariant equation \eqref{eq:pcequivglob}.  Indeed, given $A\in\mathcal{A}(g,\T)$, setting $\hat\sigma:=A\sigma$, we get a para-Hermitian tensor that satisfies this equation since $\nabla^g\sigma=0$ and the defining relation \eqref{eq.A} for $A$ directly implies the required form of $\nabla^g\hat\sigma$.
In addition, Proposition \ref{prop:sigma-metric} further relates invertible solutions of the pc-projectively invariant equation \eqref{eq:pcequivglob} to \pk metrics whose Levi-Civita connection lies in a given pc-projective class.  
Since the ultimate goal of this work is the study of \pk metrics that are pc-projectively equivalent to a fixed metric $g$, we shall hence restrict our attention to the non-degenerate solutions of the pc-projectively invariant equation \eqref{eq:pcequivglob}, that is, to the non-degenerate elements of $\mathcal{A}(g,\T)$, i.e., Benenti tensors.

\begin{cor}\label{cor:A-nondeg}
Benenti tensors 
are precisely those of the form \eqref{A.def},
where $\hat g$ is a \pk metric pc-projectively equivalent to $g$.
\end{cor}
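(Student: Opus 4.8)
The plan is to show a biconditional: a $(1,1)$-tensor $A$ is a Benenti tensor of $(g,\T)$ (i.e.\ a non-degenerate element of $\mathcal A(g,\T)$) if and only if it has the form \eqref{A.def} for some \pk metric $\hat g$ pc-projectively equivalent to $g$. The two implications are already essentially isolated in the preceding discussion, so the corollary is really a matter of assembling them.

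For the ``if'' direction, suppose $\hat g$ is a \pk metric pc-projectively equivalent to $g$ and set $A$ as in \eqref{A.def}. Writing $\hat\sigma:=|\det\hat g|^{\frac{1}{2(n+1)}}\hat g^{-1}$, Proposition~\ref{prop:sigma-metric} (necessity part) applied to $\hat g$ gives that $\hat\sigma$ is a non-degenerate para-Hermitian solution of \eqref{eq:pcequivglob} for $\nabla=\nabla^g$ — here one uses that $\nabla^{\hat g}\in[\nabla^g]$. Then $A=\hat\sigma\,\sigma^{-1}$ with $\sigma$ as in \eqref{g.sigma}, and Proposition~\ref{prop:A-equation} yields $A\in\mathcal A(g,\T)$. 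Non-degeneracy of $A$ is immediate since both $\hat\sigma$ and $\sigma$ are non-degenerate; equivalently one checks directly from \eqref{A.def} that $\det A>0$ because \pk metrics have neutral signature, so $\det\hat g/\det g>0$.

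For the ``only if'' direction, let $A\in\mathcal A(g,\T)$ be non-degenerate. Set $\hat\sigma:=A\sigma$; by the remark following Proposition~\ref{prop:A-equation}, $\hat\sigma$ is a para-Hermitian solution of \eqref{eq:pcequivglob}, and it is non-degenerate since $A$ and $\sigma$ are. By the sufficiency part of Proposition~\ref{prop:sigma-metric}, there is a \pk metric $\hat g$ with $\nabla^{\hat g}\in[\nabla^g]$ and $\hat g^{ij}=|\det\hat\sigma|^{\frac12}\hat\sigma^{ij}$, i.e.\ $\hat\sigma=|\det\hat g|^{\frac{1}{2(n+1)}}\hat g^{-1}$. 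Then $A=\hat\sigma\,\sigma^{-1}=|\det\hat g|^{\frac{1}{2(n+1)}}\hat g^{-1}\,|\det g|^{-\frac{1}{2(n+1)}}g$, which is precisely \eqref{A.def} (using again that both determinants are positive so the absolute values may be dropped and combined into the single ratio). Finally one records that this $\hat g$ is, by construction, pc-projectively equivalent to $g$.

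The steps here are all bookkeeping: the only mildly delicate point is keeping the weights and determinant factors consistent between the $\sigma$-formulation \eqref{g.sigma}, the definition \eqref{eq:Adef} of $A$ inside Proposition~\ref{prop:A-equation}, and the definition \eqref{A.def} in the introduction — in particular verifying that $A=\hat\sigma\sigma^{-1}$ really does simplify to $\left(\det\hat g/\det g\right)^{\frac{1}{2(n+1)}}\hat g^{-1}g$ rather than to some other power. I expect no genuine obstacle; the corollary is a direct corollary of the two propositions together with the elementary sign observation about neutral signature.
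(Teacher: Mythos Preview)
Your proof is correct and follows essentially the same approach as the paper: both directions are deduced from Proposition~\ref{prop:sigma-metric} (the correspondence between non-degenerate solutions $\hat\sigma$ of \eqref{eq:pcequivglob} and \pk metrics in the pc-projective class) together with Proposition~\ref{prop:A-equation} and the remark following it (the correspondence $A\leftrightarrow\hat\sigma=A\sigma$). The paper's own proof is a one-line reference to these same ingredients; you have simply written out the bookkeeping in full.
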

\begin{proof}
The claim follows directly from the definition of $A$ in \eqref{eq:Adef} and from Proposition \ref{prop:sigma-metric}, in particular from the second equality of \eqref{g.sigma}.
\end{proof}
\begin{rem}\label{rem:sqrtA}
If we fix a local system of null coordinates, i.e., coordinates adapted to the eigendistributions $T^{\pm}$ of $\T$ (see Section \ref{intro}), then, as a consequence of Corollary \ref{cor:A-nondeg}, the matrix of components of $A$ takes a block-diagonal form,
$$A = 
\begin{pmatrix}
A' & 0\\
0 & A'
\end{pmatrix},$$
where $A'$ is a $n\times n$ matrix. Indeed, the \pk structure $\T$ is as in \eqref{eq:matrix.T.adapted} and \pk metrics, being para-Hermitian, in these coordinates have an off-diagonal block form. In particular, one has $\det A>0$, and
\begin{equation*}
\det A' =:\sqrt{\det A}
= \left( \frac{\det \hat g}{\det g} \right)^{-\tfrac{1}{2(n+1)}},
\end{equation*}
where $\hat g$ is a \pk metric pc-projectively equivalent to $g$.  The last equality follows directly from the definition \eqref{A.def} of $A$.
Moreover, it is interesting to note that the function $\det A'$ is directly related to the 1-form $\Psi$ appearing in \eqref{eq.nabladifference}. Indeed, whenever that relation holds, taking its trace, we obtain
\begin{equation*}
\Psi_i = \frac{1}{2(n+1)}\bigl(\widehat\Gamma_{ij}^j - \Gamma_{ij}^j\bigr).
\end{equation*}
Hence, $\Psi$ is exact and it can be written as
\begin{equation}\label{eq:Psi.exact}
\Psi = d\psi, \qquad \text{with}\quad \psi = \frac{1}{4(n+1)}\log\left(\frac{\det \hat g}{\det g}\right)=-\frac14\log \det A\,.
\end{equation}
Since the Levi-Civita connections $\nabla^{\hat g}$ and $\nabla^g$ satisfy \eqref{eq.nabladifference}, then
it  follows  that
\begin{equation}\label{eq:sqrtdetA.exp}
\det A' = e^{-2\psi}. 
\end{equation}
\end{rem}

\section{Canonical Killing vector fields and Ricci tensors of pc-projectively equivalent \pk metrics} \label{sec:killing}
In this section, we study vector fields naturally associated with  scalar invariants of $A\in\mathcal{A}(g,\T)$, which are the coefficients of its characteristic polynomial.  
Their geometric significance will become particularly clear in Section \ref{sec:proof}, where they serve as building blocks of the coordinate systems used to express the local normal forms contained in Theorem \ref{thm:main}. In analysing these vector fields, we shall also obtain a general identity for the difference of the Ricci tensors of pc-projectively equivalent \pk metrics.  This formula, stated in \eqref{eq:main}, will play a central role in the proof of Theorem \ref{thm:main.einstein}.

\begin{prop}\label{prop:det.killing}
Let $(M,g,\T)$ be a \pk manifold and let $A \in \mathcal{A}(g,\T)$.
Then the vector field
$$\T\grad\sqrt{\det A}$$
is a Killing vector field of $g$.
\end{prop}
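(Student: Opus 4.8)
The plan is to show that the $1$-form $\xi^\flat$ dual to $\xi := \T\grad\sqrt{\det A}$ satisfies the Killing equation $\nabla^g_X \xi^\flat(Y) + \nabla^g_Y \xi^\flat(X) = 0$, i.e. that the symmetric part of $\nabla^g \xi^\flat$ vanishes. First I would record the algebraic facts already available: $A\in\mathcal A(g,\T)$ is $g$-symmetric, commutes with $\T$, and satisfies \eqref{eq.A} with $\Lambda = \tfrac14\grad\tr A$. From these, $g(\T\Lambda,\cdot) = -\,d(\tfrac14\tr A)\circ\T$, and one can rewrite \eqref{eq.A} in the purely $1$-form language that is convenient for a symmetrization argument.

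The key computational step is to express $\grad\sqrt{\det A}$ in terms of $A$ and $\Lambda$. Writing $f:=\sqrt{\det A}$ and using $\nabla^g_X(\det A) = \det A\,\tr(A^{-1}\nabla^g_X A)$, together with the trace of the defining equation \eqref{eq.A}, I expect a clean formula of the shape $\grad f = f\,A^{-1}(\text{linear in }\Lambda)$; more precisely, contracting \eqref{eq.A} against $A^{-1}$ should give $X(\tr A) \cdot(\text{something}) $ reorganizing into $\grad f = f\,A^{-1}\bigl(2\Lambda - \tr(\cdot)\cdots\bigr)$ — in any case, a first-order expression with no derivatives of $A$ beyond those packaged in $\Lambda$. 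Applying $\T$ then gives $\xi = \T\grad f$ as an explicit algebraic expression in $A$, $\T$, $f$, and $\Lambda$.

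Next I would differentiate $\xi^\flat$ covariantly. Every occurrence of $\nabla^g A$ gets replaced by the right-hand side of \eqref{eq.A}, every $\nabla^g f$ by the formula just obtained, and $\nabla^g\T = 0$, $\nabla^g g = 0$ are used freely. The resulting expression for $\nabla^g_X \xi^\flat(Y)$ is a sum of terms each built from $g$, $\T$, $A$, $X$, $Y$, $\Lambda$, and (from the second derivative of $f$, equivalently the derivative of $\Lambda$) possibly $\nabla^g\Lambda$; crucially, since $\Lambda$ enters $\xi$ only through $\T\Lambda = \tfrac14\T\grad\tr A$, and one can check that the terms involving $\nabla^g\Lambda$ are themselves $g$-symmetric in $X,Y$ in a way that cancels — or alternatively, invoke that $\T\grad\tr A$ is itself Killing by the analogous (and presumably already proved or simpler) statement and subtract. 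Then I symmetrize in $X$ and $Y$: the terms of the form $X^\flat(Y)\otimes(\cdots) + (\cdots)\otimes$ are already symmetric, so they must be shown to cancel against the para-Hermitian $\T$-twisted terms using $[A,\T]=0$, $A=A^*$, $g(\T\cdot,\T\cdot)=-g$, and the identity $\T^2=\Id$; this is where the para-Kähler signature relations do the work.

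The main obstacle I anticipate is bookkeeping: there are several structurally similar terms (those with $A$, those with $\T A$, those with $A^{-1}$ coming from $\grad f$), and making the symmetrized sum visibly vanish requires using the symmetry of $A$ and the anti-compatibility $g(\T X,\T Y)=-g(X,Y)$ in just the right combinations, plus the fact that $\grad\tr A$ and $\grad\sqrt{\det A}$ are gradients (so their covariant derivatives are Hessians, automatically symmetric). A cleaner route that avoids the worst of this is to argue via the symplectic reformulation \eqref{eq:formula.Apos}: the $2$-form $\phi = g(A\T\cdot,\cdot)$ satisfies \eqref{eq:formula.Apos}, and $\sqrt{\det A}$ is (up to normalization) $\tr_\omega\phi$ or a Pfaffian-type invariant of $\phi$; contracting \eqref{eq:formula.Apos} appropriately and using $d\omega = 0$ should exhibit $\mathcal L_{\T\grad\sqrt{\det A}}\,g = 0$ directly, since Hamiltonian-type flows preserve the relevant structure. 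I would present whichever of the two computations is shorter, but in either case the essential input is \eqref{eq.A} (equivalently \eqref{eq:formula.Apos}) together with $\nabla^g\T = 0$ and the para-Hermitian identities.
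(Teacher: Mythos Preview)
Your proposal has a genuine gap at the step you yourself flag as ``the main obstacle''. After writing $\grad\sqrt{\det A}$ in terms of $A^{-1}\Lambda$ (this part is fine: one gets $\Psi:=-\tfrac14\,d\log\det A=-(A^{-1}\Lambda)^\flat$), the symmetrized covariant derivative of $\xi^\flat$ reduces---as the paper also observes---to asking that the tensor $S:=2\,\Psi\otimes\Psi-\nabla^g\Psi$ be para-Hermitian. The term $\nabla^g\Psi$ splits into a piece coming from $\nabla^g A^{-1}$ (which \eqref{eq.A} controls) and a piece $-g(A^{-1}\nabla^g_{\!X}\Lambda,\,Y)$ involving the Hessian of $\tr A$. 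That Hessian is \emph{not} determined by \eqref{eq.A} together with the pointwise para-Hermitian identities; your assertion that ``the terms involving $\nabla^g\Lambda$ are themselves $g$-symmetric in $X,Y$ in a way that cancels'' is not correct without further input. Likewise, your fallback ``invoke that $\T\grad\tr A$ is itself Killing'' is circular here: that statement is exactly the para-Hermitianity of $\mathrm{Hess}(\tr A)$, which is not simpler---in the paper it is the Corollary deduced \emph{from} this Proposition, not the other way around.

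The missing idea, which the paper supplies, is a curvature-level input: one introduces a second \pk metric $\hat g$ pc-projectively equivalent to $g$ (so that $\Psi$ is the $1$-form in \eqref{eq.nabladifference}), computes the difference $\widehat\Ric-\Ric$ in terms of $\nabla^g\Psi$ and $\Psi\otimes\Psi$, and then uses the known fact that the Ricci tensor of any \pk metric is para-Hermitian. Comparing the para-Hermitian and anti-para-Hermitian parts of the resulting identity forces $2\,\Psi\otimes\Psi-\nabla^g\Psi$ to be para-Hermitian, which is exactly the Killing condition. Your symplectic alternative via \eqref{eq:formula.Apos} is not obviously a shortcut either: $\tr_\omega\phi$ is $\tr A$, not $\sqrt{\det A}$, and in the K\"ahler Hamiltonian-$2$-form literature the analogous Killing statements also rest on Ricci-type identities. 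If you want to avoid invoking $\widehat\Ric$, you would need to extract the same constraint by differentiating \eqref{eq.A} once more and contracting against curvature---effectively rederiving the para-Hermitianity of $\Ric$---rather than hoping for a first-order algebraic cancellation.
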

\begin{proof}
To prove the proposition, it suffices to check locally that the vector field $\T \grad \sqrt{\det A}$ is Killing, that is,
\begin{equation}\label{eq:killing.local}
\nabla_i^g (\T \grad \sqrt{\det A})_j + \nabla_j^g (\T \grad \sqrt{\det A})_i = 0
\quad \text{for all } i,j.
\end{equation}
Let us recall that the symplectic form $\omega$ of the \pk manifold is given by \eqref{eq:omega}. Since, by definition, $\T \grad \sqrt{\det A} = \omega^{-1}\, d(\sqrt{\det A})$,  we have, by Remark \ref{rem:sqrtA} and in particular by formulas \eqref{eq:Psi.exact} and \eqref{eq:sqrtdetA.exp}, that
\begin{equation*}
\T \grad \sqrt{\det A} = -2\, e^{-2\psi}\, \omega^{-1}(\Psi).
\end{equation*}
Recalling that $\nabla^g$  preserves $\T$, i.e. $\nabla^g\T=0$, a direct computation then yields
\begin{equation*}
\nabla_i^g (\T \grad \sqrt{\det A})_j + \nabla_j^g (\T \grad \sqrt{\det A})_i
=-2 \bigl(-2 \Psi_i \Psi_k + \nabla_i^g \Psi_k\bigr)\, \T^k_j
-2 \bigl(-2 \Psi_j \Psi_k + \nabla_j^g \Psi_k\bigr)\, \T^k_i.
\end{equation*}
Hence, the Killing condition \eqref{eq:killing.local} is equivalent to requiring that the $(0,2)$-tensor
\begin{equation*}
S := 2\, \Psi \otimes \Psi - \nabla^g \Psi
\end{equation*}
is para-Hermitian, i.e., 
$$
S(\T \cdot, \T \cdot) = -S(\cdot, \cdot)\,.
$$
Below we shall prove it.

\noindent
To this aim, we now consider a \pk metric $\hat g$ on $M$ pc-projectively equivalent to $g$.
In particular, we take into account the relation between the Ricci tensors of $g$ and $\hat g$  by exploiting formula \eqref{eq.nabladifference} and \eqref{eq:tildeG-G}.  
Recalling that the components of the curvature tensor $R$ of $g$ are given by
$$
R^k_{l ij} = \partial_i \Gamma^k_{l j} - \partial_j \Gamma^k_{l i} 
+ \Gamma^k_{i r}\Gamma^r_{l j} - \Gamma^k_{j r}\Gamma^r_{l i}
$$
(and analogously the components of the curvature tensor $\widehat R$ of $\hat{g}$), it follows that
\begin{equation*}
\widehat R^k_{l ij}- R^k_{l ij} =
 \nabla_i^g  \bigl(\widehat\Gamma^k_{l j} - \Gamma^k_{l j}\bigr) 
- \nabla_j^g  \bigl(\widehat\Gamma^k_{l i} - \Gamma^k_{l i}\bigr) 
+  \bigl(\widehat\Gamma^k_{i r}  - \Gamma^k_{i r} \bigr) 
   \bigl(\widehat\Gamma^r_{l j} - \Gamma^r_{l j}\bigr) 
-  \bigl(\widehat\Gamma^k_{j r} - \Gamma^k_{j r}\bigr)
   \bigl(\widehat\Gamma^r_{l i} - \Gamma^r_{l i}\bigr).
\end{equation*}
Taking the trace with respect to $k$ and $i$, using the fact that $\nabla^g\T=0$ and considering \eqref{eq:tildeG-G}, a tedious computation yields the following relation between the Ricci tensors of $g$ and $\hat g$:
\begin{equation*}
\widehat\Ric_{l j}-\Ric_{l j}
=  \nabla_l^g \Psi_j - (2n+1)\nabla_j^g \Psi_l
+ \T^r_l \T^k_j \nabla_k^g \Psi_r
+ \T^r_j \T^k_l \nabla_k^g \Psi_r
+ 2(n-1)\Psi_l\Psi_j
+ 2(n-1)\Psi_r \T^r_l\, \Psi_s \T^s_j.
\end{equation*}
Since $\Psi$ is an exact one-form, its covariant derivative $\nabla \Psi$ is symmetric.  Therefore, the previous relation can be conveniently rewritten as
\begin{equation}\label{eq:Ric.difference}
\widehat\Ric-\Ric
=-2n\bigl( \nabla^g\Psi-\Psi \otimes \Psi-(\Psi \circ \T) \otimes (\Psi \circ \T) \bigr)
+2\bigl(\nabla^g \Psi(\T\cdot, \T\cdot) -\Psi \otimes \Psi-(\Psi \circ \T) \otimes (\Psi \circ \T)\bigr).
\end{equation}
It is well known that the Ricci tensor of any \pk metric is para-Hermitian (see, for instance, \cite[Proposition 5.4]{amt}); consequently, the right-hand side of \eqref{eq:Ric.difference} must also be para-Hermitian.  Applying the defining property of para-Hermitian $(0,2)$ tensors, we obtain
\begin{multline*}
-2n\bigl( \nabla^g\Psi-\Psi \otimes \Psi-(\Psi \circ \T) \otimes (\Psi \circ \T) \bigr)
+2\bigl(\nabla^g \Psi(\T\cdot, \T\cdot) -\Psi \otimes \Psi-(\Psi \circ \T) \otimes (\Psi \circ \T)\bigr)
\\
=2\bigl( \nabla^g\Psi-\Psi \otimes \Psi-(\Psi \circ \T) \otimes (\Psi \circ \T) \bigr)
-2n\bigl(\nabla^g \Psi(\T\cdot, \T\cdot) -\Psi \otimes \Psi-(\Psi \circ \T) \otimes (\Psi \circ \T)\bigr),
\end{multline*}
which implies that
\begin{equation*}
\nabla^g\Psi-\Psi \otimes \Psi-(\Psi \circ \T) \otimes (\Psi \circ \T)
=\nabla^g \Psi(\T\cdot, \T\cdot) -\Psi \otimes \Psi-(\Psi \circ \T) \otimes (\Psi \circ \T).
\end{equation*}
Hence, the difference of the Ricci tensors \eqref{eq:Ric.difference} can be expressed in the more compact form:
\begin{equation}\label{eq:main}
\widehat\Ric-\Ric
=-2(n-1)\bigl( \nabla^g\Psi-\Psi \otimes \Psi-(\Psi \circ \T) \otimes (\Psi \circ \T) \bigr).
\end{equation}
Finally, observing that the left-hand side of
\begin{equation*}
\widehat\Ric-\Ric
+2(n-1)\bigl(\Psi \otimes \Psi-(\Psi \circ \T) \otimes (\Psi \circ \T)\bigr)
=2 (n-1)\bigl( 2\,\Psi \otimes \Psi- \nabla^g \Psi\bigr),
\end{equation*}
 is para-Hermitian, we conclude that 
$$
2\,\Psi \otimes \Psi- \nabla^g \Psi
$$
is para-Hermitian as well.  
This completes the proof.
\end{proof}

\smallskip
The previous proposition identifies a Killing vector field associated with any tensor $A\in\mathcal{A}(g,\T)$.  
We now extend this construction to associate with $A$ a set of Killing vector fields.

\smallskip
As a direct consequence of the linearity of equation \eqref{eq.A}, for any $A\in\mathcal{A}(g,\T)$ and any $t\in\R$, the tensor $ A - t\,\Id_{2n}$ also belongs to $\mathcal{A}(g,\T)$.  Moreover, the properties of the determinant of $A$ discussed in Remark \ref{rem:sqrtA} imply that $\det(A - t\,\Id_{2n})$ is a perfect square, so the function $\sqrt{\det(A - t\,\Id_{2n})}$ is a polynomial in $t$.  Expanding it as
\begin{equation}\label{eq:mu}
\sqrt{\det(A-t\,\Id_{2n})}
= \sum_{i=0}^{n} (-1)^{n-i}\,\mu_i\, t^{n-i},
\end{equation}
we obtain a distinguished family of smooth functions $\mu_i\in C^\infty(M)$ naturally associated with $A$. Moreover, we define the following  gradient vector fields
\begin{equation}\label{eq:vi}
V_i:=\grad \mu_i,\qquad\text{for } i=1,\mathellipsis,n.
\end{equation}

\begin{cor}\label{cor:mui.killing}
Let $(M,g,\T)$ be a \pk manifold and let $A\in\mathcal{A}(g,\T)$. Then, for each $i$, the vector field $\T V_i$ is Killing with respect to $g$.
\end{cor}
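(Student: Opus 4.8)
The plan is to reduce Corollary \ref{cor:mui.killing} to Proposition \ref{prop:det.killing} by observing that each $\mu_i$ arises, up to a combinatorial factor, as a coefficient in the polynomial expansion of $\sqrt{\det(A-t\,\Id_{2n})}$, and that $A-t\,\Id_{2n}$ is itself an element of $\mathcal{A}(g,\T)$ for every $t$. First I would invoke the linearity of equation \eqref{eq.A} — explicitly noted in the excerpt — together with the fact that $\Id_{2n}$ trivially satisfies \eqref{eq.A} (its covariant derivative vanishes and $\tr\Id_{2n}$ is constant, so the right-hand side vanishes), to conclude $A_t:=A-t\,\Id_{2n}\in\mathcal{A}(g,\T)$ for all $t\in\R$. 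Note that $A_t$ is automatically a Benenti tensor (i.e. non-degenerate) for all but finitely many $t$, but in fact non-degeneracy is not needed here: Proposition \ref{prop:det.killing} is stated for arbitrary $A\in\mathcal{A}(g,\T)$, so it applies directly to $A_t$.

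Next I would apply Proposition \ref{prop:det.killing} to $A_t$: for each fixed $t$, the vector field $\T\grad\sqrt{\det A_t}$ is Killing for $g$. By the expansion \eqref{eq:mu}, $\sqrt{\det(A-t\,\Id_{2n})}=\sum_{i=0}^n(-1)^{n-i}\mu_i\,t^{n-i}$, so
\begin{equation*}
\T\grad\sqrt{\det A_t}=\sum_{i=0}^n(-1)^{n-i}t^{n-i}\,\T\grad\mu_i=\sum_{i=0}^n(-1)^{n-i}t^{n-i}\,\T V_i\,,
\end{equation*}
using that $\grad$ and $\T$ are $\R$-linear and $t$-independent. Thus the Killing condition $\nabla^g_X(\T\grad\sqrt{\det A_t})^{\flat}$ being skew is a polynomial identity in $t$ whose coefficients are precisely (signed) the symmetrized covariant derivatives of $(\T V_i)^{\flat}$. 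Since this polynomial in $t$ vanishes identically — being the symmetrization of $\nabla^g$ of a Killing covector for every $t$ — each coefficient vanishes separately, i.e. $\nabla^g_i(\T V_j)_k+\nabla^g_k(\T V_j)_i=0$ for every $j=0,\dots,n$. In particular this holds for $j=1,\dots,n$, which is the assertion of the corollary.

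The only mild subtlety — and the closest thing to an obstacle — is making the "equate coefficients in $t$" step rigorous: one must be sure that the Killing equation for $\T\grad\sqrt{\det A_t}$ holds for all real $t$ simultaneously (which it does, since Proposition \ref{prop:det.killing} has no restriction on $t$), so that the associated polynomial in $t$ with tensor-valued coefficients vanishes identically, forcing each coefficient to vanish. This is immediate once one notices that $\sqrt{\det A_t}$ is genuinely polynomial in $t$ (guaranteed by the block structure in Remark \ref{rem:sqrtA}, since $\det A_t=(\det A'_t)^2$ and $\det A'_t$ is a degree-$n$ polynomial in $t$), so that its gradient, and hence $\T\grad\sqrt{\det A_t}$, depends polynomially on $t$ with coefficients $(-1)^{n-i}\T V_i$. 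I would also remark that the case $i=0$ gives $\mu_0$ constant (the leading coefficient), so $\T V_0=0$ trivially, and the content is entirely in $i=1,\dots,n$; no separate argument is needed there.
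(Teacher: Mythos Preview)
Your proof is correct and follows essentially the same route as the paper: both arguments apply Proposition \ref{prop:det.killing} to $A-t\,\Id_{2n}\in\mathcal{A}(g,\T)$ and then extract the coefficients of the resulting polynomial in $t$; the paper does this by differentiating in $t$ and evaluating at $t=0$, while you equate coefficients directly, which is an equivalent formulation.
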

\begin{proof}
Since $A-t\,\Id_{2n}\in\mathcal{A}(g,\T)$ for all $t\in\R$, Proposition \ref{prop:det.killing} implies that
$$X(t):=\T\grad \sqrt{\det(A-t\,\Id_{2n})}$$
is a Killing vector field of $g$ for every $t$. Hence $\mathcal{L}_{X(t)}g=0$.  

From \eqref{eq:mu}, each coefficient $\mu_i$ can be written as
$$\mu_i=\frac{1}{(n-i)!}\,\frac{d^{\,n-i}}{dt^{\,n-i}}\Big|_{t=0}\sqrt{\det(A-t\,\Id_{2n})}.$$
Since the map $F:\R\times M\rightarrow\R$, defined as  $F(t,p)=\sqrt{\det(A(p)-t\,\Id_{2n})}$,
is smooth, differentiation with respect to $t$ commutes with all differential operators acting on $p$, such as the gradient and the Lie derivative.  Therefore, being $\T$ independent of t, we have
$$
\T V_i=\T\grad \mu_i=\frac{1}{(n-i)!}\,\frac{d^{\,n-i}}{dt^{\,n-i}}\Big|_{t=0}X(t),
$$
and consequently
$$
\mathcal{L}_{\frac{d^{\,n-i}}{dt^{\,n-i}}\big|_{t=0}X(t)}g
=\tfrac{d^{\,n-i}}{dt^{\,n-i}}\big|_{t=0}\,\mathcal{L}_{X(t)}g
=0.$$
It follows that $\T V_i$ is a Killing vector field.
\end{proof}
In particular, to each Benenti tensor of $(g,\T)$ one can canonically associate a family of Killing vector fields of $g$.  As shown in the following proposition, these vector fields admit a symplectic interpretation and turn out to be para-holomorphic. This latter property will be repeatedly exploited in constructing the coordinate vector fields for the degenerate cases of Theorem \ref{thm:main}, see Sections \ref{sec.rank.3}-\ref{sec.rank.1}.

\begin{prop}\label{prop:hamilt}
The vector fields $\T V_i$ and $V_i$, where $V_i$ given by \eqref{eq:vi}, are para-holomorphic for any $i$.
\end{prop}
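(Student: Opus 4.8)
The statement to prove is that, for each $i$, both $V_i=\grad\mu_i$ and $\T V_i$ are para-holomorphic. The natural strategy is to recall that a vector field $X$ on a para-complex manifold $(M,\T)$ is para-holomorphic precisely when its local flow preserves the para-complex structure, equivalently when $\mathcal{L}_X\T=0$, i.e. $[\T,X]:=\T\circ(\mathcal{L}_X)-\mathcal{L}_X\circ\T$ (viewed as an endomorphism-valued derivation) vanishes; in index form this reads $X^p\de_p\T^j_k-\T^p_k\de_p X^j+\T^j_p\de_k X^p=0$. Since $\nabla^g\T=0$, partial derivatives of $\T$ can be rewritten via Christoffel symbols, and the para-holomorphy condition becomes $\T^j_p\nabla_k^g X^p=\nabla^g_p X^j\,\T^p_k$, namely that the $(1,1)$-tensor $\nabla^g X$ commutes with $\T$.

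\textbf{Key steps.} First I would observe that by Corollary \ref{cor:mui.killing} the vector field $\T V_i$ is Killing, so its covariant differential $\nabla^g(\T V_i)$ is skew-symmetric with respect to $g$; equivalently, lowering an index, $\nabla^g\bigl((\T V_i)^\flat\bigr)$ is a $2$-form. Second, I would use $\nabla^g\T=0$ to transfer the para-complex structure across the covariant derivative: $\nabla^g_X(\T V_i)=\T(\nabla^g_X V_i)$ for every $X$, so the endomorphism $X\mapsto\nabla^g_X(\T V_i)$ equals $\T\circ(X\mapsto\nabla^g_X V_i)$. Third, since $V_i=\grad\mu_i$ and $\mu_i\in C^\infty(M)$, the Hessian $\nabla^g(V_i^\flat)=\nabla^g d\mu_i$ is a \emph{symmetric} $(0,2)$-tensor, i.e. the endomorphism $B:=X\mapsto\nabla^g_X V_i$ is $g$-symmetric. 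Then the composite $\T\circ B$ has, as its associated $(0,2)$-tensor, precisely $g((\T B)\cdot,\cdot)=\omega(B\cdot,\cdot)$ up to sign, which is skew by the Killing property just recalled; combining with $B$ symmetric this forces $B$ to commute with $\T$ (the standard splitting argument: an endomorphism that is simultaneously $g$-symmetric and whose $\T$-twist is $g$-skew must be para-Hermitian-commuting). Concretely, $B$ $g$-symmetric plus $\T B$ $g$-skew gives $g(\T B X,Y)=-g(\T B Y,X)=-g(BY,\T X)=-g(Y,B\T X)=-g(B\T X,Y)$, so $\T B=-B\T$... let me instead phrase it as: writing $B^\pm=\tfrac12(B\pm\T B\T)$ for the parts commuting/anticommuting with $\T$, symmetry of $B$ and skewness of $\T B$ together kill $B^-$, hence $[\T,B]=0$. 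This is exactly the condition $\T^j_p\nabla^g_k V_i^p=\nabla^g_p V_i^j\,\T^p_k$, so $V_i$ is para-holomorphic; and then $\nabla^g_X(\T V_i)=\T\nabla^g_X V_i$ together with $[B,\T]=0$ yields that $\nabla^g(\T V_i)$ also commutes with $\T$, so $\T V_i$ is para-holomorphic as well.

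\textbf{Main obstacle.} The conceptually delicate point is the purely linear-algebraic implication ``$g$-symmetric endomorphism whose $\T$-conjugate is $g$-skew $\Longrightarrow$ commutes with $\T$'', and making sure the signs coming from the para-Hermitian identity $g(\T\cdot,\T\cdot)=-g(\cdot,\cdot)$ are handled correctly — this is where the para-complex case genuinely differs in bookkeeping from the K\"ahler one (where $g(J\cdot,J\cdot)=+g(\cdot,\cdot)$), even though the final statement is formally analogous. A secondary care point is the precise equivalence between para-holomorphy of $X$ and $\mathcal{L}_X\T=0$, together with the rewriting of $\mathcal{L}_X\T$ in terms of $\nabla^g X$ using $\nabla^g\T=0$; once these two ingredients are in place the proof is short. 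I would also note that exactly the same argument applies verbatim with $V_i$ replaced by $\grad\sqrt{\det A}$ (the $i$-th being the leading coefficient), so Proposition \ref{prop:det.killing} is the $i=0$ instance.
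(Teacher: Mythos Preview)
Your argument is correct and constitutes a genuinely different route from the paper's.  The paper argues symplectically: it observes that $\T V_i$ is the Hamiltonian vector field of $\mu_i$ with respect to $\omega$, hence $\mathcal{L}_{\T V_i}\omega=0$; together with $\mathcal{L}_{\T V_i}g=0$ (Corollary~\ref{cor:mui.killing}) this forces $\mathcal{L}_{\T V_i}\T=0$, and then the Nijenhuis identity $\mathcal{L}_{V_i}\T=\T\,\mathcal{L}_{\T V_i}\T$ (valid because $N_\T=0$) yields para-holomorphy of $V_i$.  You instead work tensorially: using $\nabla^g\T=0$ to rewrite $\mathcal{L}_X\T=0$ as commutation of $\nabla^gX$ with $\T$, and then combining the $g$-symmetry of the Hessian $B=\nabla^gV_i$ with the $g$-skewness of $\T B=\nabla^g(\T V_i)$.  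Both approaches are standard; yours has the advantage of being entirely self-contained in Riemannian linear algebra (no symplectic formalism, no Nijenhuis tensor), while the paper's makes the Hamiltonian nature of the $\T V_i$ explicit, which is used again later (e.g.\ in the proof of Proposition~\ref{prop:para-holomorphic}).

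Two minor remarks.  First, the direct computation you abandoned actually works: since $\T$ is $g$-skew and $B$ is $g$-symmetric, the $g$-adjoint of $\T B$ is $-B\T$; the Killing condition says this adjoint equals $-\T B$, so $\T B=B\T$ immediately --- your sign slip was in the step $-g(\T BY,X)=-g(BY,\T X)$, which should read $+g(BY,\T X)$.  The $B^\pm$ detour is fine but unnecessary.  Second, $\sqrt{\det A}=\mu_n$, not $\mu_0$, so Proposition~\ref{prop:det.killing} is the $i=n$ instance, not $i=0$.
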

\begin{proof}
The vector fields $\T V_i$ considered in Corollary \ref{cor:mui.killing} are in fact the Hamiltonian vector fields, with respect to the \pk form
$\omega$ given by \eqref{eq:omega}, associated with the functions $\mu_i$,
that is,
$$
\omega(\T V_i,\cdot)=d\mu_i.$$
Hamiltonian vector fields preserve $\omega$, namely
$$\mathcal{L}_{\T V_i}\omega=0,$$
so they are \emph{symplectic}.
By Corollary \ref{cor:mui.killing}, the same vector fields are also Killing with respect to $g$, that is, $\mathcal{L}_{\T V_i}g=0$.
Since $g$ and $\omega$ determine the endomorphism $\T$ by the relation \eqref{eq:omega}, it follows that
$$
g\bigl((\mathcal{L}_{\T V_i}\T)(\cdot),\cdot\bigr) = (\mathcal{L}_{\T V_i}\omega)(\cdot,\cdot)
= 0,$$
hence $\mathcal{L}_{\T V_i}\T=0$.
In other words, the vector fields  $\T V_i$ are \emph{para-holomorphic}. Moreover, the vanishing of the Nijenhuis tensor of $\T$ implies
 $$
\mathcal{L}_{V_i}\T = \T\,\mathcal{L}_{\T V_i}\T,
 $$
that is  each $V_i$ is also para-holomorphic.  
\end{proof}

\section{Proof of Theorem \ref{thm:main.einstein}}\label{sec:pcEinstein}

In order to prove Theorem \ref{thm:main.einstein}, we need some ancillary technical lemmas and a corollary, that we establish below. More precisely, starting from equation \eqref{eq:main}, our first target is to prove, by using Lemma \ref{lemma.Filippo.Einstein.1} and Corollary \ref{cor.Filippo.Einstein.1},  that the Ricci tensor of a metric $\tilde{g}$ of the form \eqref{eq:lin.comb.metrics} is proportional to $\tilde{g}$ by a factor $\tilde{\lambda}$ (Lemma \ref{lemma:tilde.lambda}), provided that both $g$ and $\hat{g}$ are Einstein metrics . Finally, we prove that such factor $\tilde{\lambda}$ is actually a real constant.

\medskip 
In view of \eqref{eq:Psi.exact}, we have that $\Psi=-\tfrac14\, d\,\log\det A$. Also, we recall that $\Lambda=\tfrac{1}{4}\grad\tr A$, see \eqref{tildeL}.
Since, for any vector field $X$, we have that
\begin{equation}\label{eq:dtrA.dlogdetA}
(d \tr A)(X)= \tr (\nabla_X^g A)\overset{\eqref{eq.A}}{=}4\,g(\Lambda,X)\,, \quad
(d\log\det A)(X)= \tr  \big(A^{-1}\nabla_X^g A\big)\overset{\eqref{eq.A}}{=}4\,g(A^{-1}\Lambda,X),
\end{equation}
we obtain the following equality, that will be employed several times in this section:
\begin{equation}\label{eq:PsiLambda}
\Psi(X)=-g(\Lambda, A^{-1}X).
\end{equation}
\begin{lemma}\label{lemma.Filippo.Einstein.1}
Let $g$ and $\hat{g}$ be pc-projectively equivalent metrics and $A$ given by \eqref{A.def}.
Then, from \eqref{eq:main}, we obtain
\begin{equation}\label{main.new}
\tfrac{1}{2(n-1)}\Big(\widehat\Ric-\Ric\Big)(X,Y)
=
 g(A^{-1}Y, \nabla^g_X \Lambda)- g(A^{-1} \Lambda,  \Lambda )\, g(Y,   A^{-1}X).
\end{equation}
\end{lemma}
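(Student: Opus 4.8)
The plan is to start from equation \eqref{eq:main}, namely
$$
\widehat\Ric-\Ric
=-2(n-1)\bigl( \nabla^g\Psi-\Psi \otimes \Psi-(\Psi \circ \T) \otimes (\Psi \circ \T) \bigr),
$$
and to rewrite each of the three terms on the right-hand side using the relation \eqref{eq:PsiLambda}, i.e.\ $\Psi(X)=-g(\Lambda, A^{-1}X)$, together with the defining equation \eqref{eq.A} for $A$ and the commutation $[A,\T]=0$ together with the para-Hermitian identity $g(\T\cdot,\T\cdot)=-g(\cdot,\cdot)$. The target identity \eqref{main.new} has only two terms, so the third term $(\Psi\circ\T)\otimes(\Psi\circ\T)$ must, after differentiation, recombine with part of $\nabla^g\Psi$; this is exactly the kind of cancellation one expects given that the Ricci tensor is para-Hermitian and \eqref{eq:main} already encodes such a simplification.

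First I would compute $(\nabla^g_X\Psi)(Y)$. Since $\nabla^g$ is the Levi-Civita connection of $g$, we have $(\nabla^g_X\Psi)(Y)=X(\Psi(Y))-\Psi(\nabla^g_X Y)$; substituting $\Psi(Y)=-g(\Lambda,A^{-1}Y)$ and using $\nabla^g g=0$ gives
$$
(\nabla^g_X\Psi)(Y)=-g(\nabla^g_X\Lambda, A^{-1}Y)-g(\Lambda,(\nabla^g_X A^{-1})Y).
$$
For the second summand I would use $\nabla^g_X A^{-1}=-A^{-1}(\nabla^g_X A)A^{-1}$ and plug in \eqref{eq.A} for $\nabla^g_X A$; each of the four rank-one pieces of \eqref{eq.A}, sandwiched between $A^{-1}$'s and contracted against $\Lambda$ and $A^{-1}Y$, produces an explicit scalar expression. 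The two pieces involving $\T$ contribute terms of the form $g(\T\Lambda,A^{-1}\cdot)$, and these are precisely what will be matched, after analogous manipulation, by the $(\Psi\circ\T)\otimes(\Psi\circ\T)$ term: indeed $(\Psi\circ\T)(X)=\Psi(\T X)=-g(\Lambda,A^{-1}\T X)=-g(\Lambda,\T A^{-1}X)=g(\T\Lambda,A^{-1}X)$ using $[A,\T]=0$ and para-Hermitianity. Likewise $\Psi(X)\Psi(Y)=g(\Lambda,A^{-1}X)g(\Lambda,A^{-1}Y)$ is already in the desired shape, matching the term $g(A^{-1}\Lambda,\Lambda)g(Y,A^{-1}X)$ once one also accounts for the contribution $-g(\Lambda,A^{-1}(\Lambda^\flat\otimes X)A^{-1}Y)=-g(\Lambda,A^{-1}\Lambda)g(X,A^{-1}Y)$ coming from the $\Lambda^\flat\otimes X$ piece of \eqref{eq.A}. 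The claim is that, after expanding everything and using symmetry of $g$ and of $\nabla^g A$ in the appropriate slots, all $\T$-dependent terms cancel and what survives is exactly $-\tfrac{1}{2(n-1)}(\widehat\Ric-\Ric)(X,Y)=g(A^{-1}Y,\nabla^g_X\Lambda)-g(A^{-1}\Lambda,\Lambda)g(Y,A^{-1}X)$.

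The main obstacle I anticipate is purely bookkeeping: keeping track of which of the $g$-contractions are symmetric in $X,Y$ (the left side $\widehat\Ric-\Ric$ is symmetric, and $\nabla^g\Psi$ is symmetric since $\Psi=d\psi$ is exact, cf.\ \eqref{eq:Psi.exact}), so that the manifestly non-symmetric-looking single term $g(A^{-1}Y,\nabla^g_X\Lambda)$ on the right is in fact consistent. One should check that $g(A^{-1}Y,\nabla^g_X\Lambda)$ is symmetric under $X\leftrightarrow Y$ as a consequence of $\Lambda=\tfrac14\grad\tr A$ being a gradient and $A$ being $g$-symmetric and $\nabla^g$-related to it through \eqref{eq.A}; alternatively, one simply verifies that the two sides of \eqref{main.new} agree after the symmetrization already built into \eqref{eq:main}. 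Beyond this, the computation is a direct substitution with no conceptual difficulty, so I would carry out the expansion of $(\nabla^g_X\Psi)(Y)$ as above, then of $\Psi(X)\Psi(Y)$ and $(\Psi\circ\T)(X)(\Psi\circ\T)(Y)$, collect the coefficient of each monomial in $\Lambda$, $A^{-1}$, $\T$, and observe the cancellations yielding \eqref{main.new}.
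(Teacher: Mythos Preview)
Your approach is essentially the same as the paper's: compute $(\nabla^g_X\Psi)(Y)=-g(\nabla^g_X\Lambda,A^{-1}Y)-g(\Lambda,(\nabla^g_X A^{-1})Y)$, expand $\nabla^g_X A^{-1}=-A^{-1}(\nabla^g_X A)A^{-1}$ via \eqref{eq.A}, and combine with $\Psi\otimes\Psi$ and $(\Psi\circ\T)\otimes(\Psi\circ\T)$ written through \eqref{eq:PsiLambda}. One small sharpening: after the cancellations you describe, not \emph{both} $\T$-terms coming from $\nabla^g_X A^{-1}$ are absorbed by $(\Psi\circ\T)\otimes(\Psi\circ\T)$; one of them survives with coefficient $g(\T A^{-1}\Lambda,\Lambda)$, and this coefficient vanishes identically because $g(\T A^{-1}\Lambda,\Lambda)=-g(A^{-1}\Lambda,\T\Lambda)=-g(\Lambda,\T A^{-1}\Lambda)=-g(\T A^{-1}\Lambda,\Lambda)$ (using $\T$ $g$-skew, $A^{-1}$ $g$-symmetric, $[A^{-1},\T]=0$). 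The paper singles this identity out as \eqref{gtALambda}; once you make it explicit, your bookkeeping goes through exactly as planned.
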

\begin{proof}
Taking into account that
\begin{multline*}
(\nabla^g_X A^{-1})(Y)=-A^{-1}( \nabla^g_X A) (A^{-1}Y) \overset{\eqref{eq.A}}{=}\\
= -(A^{-1}X)^\flat(Y)\otimes A^{-1} \Lambda -(A^{-1} \Lambda)^\flat(Y) \otimes A^{-1}X
+(\T A^{-1}X)^\flat(Y)\otimes \T A^{-1} \Lambda 
+(\T A^{-1} \Lambda)^\flat (Y)\otimes \T A^{-1}X,
\end{multline*}
we have that
\begin{multline*}
(\nabla^g\Psi)(X,Y)= X\big( \Psi(Y)\big)-\Psi \big(\nabla_X^g Y\big)\overset{\eqref{eq:PsiLambda}}{=}\\
-X\big( g(\Lambda, A^{-1}Y)\big) +g \big(\Lambda, A^{-1}\nabla_X^g Y\big)=
-g\big(\nabla_X^g \Lambda, A^{-1}Y\big)-g\big( \Lambda, (\nabla_X^gA^{-1})Y\big)=\\
-g\big(\nabla_X^g \Lambda, A^{-1}Y\big)+ (A^{-1}X)^\flat(Y)\,g\big( \Lambda, A^{-1} \Lambda\big) +(A^{-1} \Lambda)^\flat(Y)\, g\big( \Lambda,A^{-1}X\big)\\
-(\T A^{-1}X)^\flat(Y)\, g\big( \Lambda,  \T A^{-1} \Lambda \big)-
(\T A^{-1} \Lambda)^\flat (Y)\, g \big( \Lambda, \T A^{-1}X\big)\,,
\end{multline*}
that allows to write \eqref{eq:main} as follows:
\begin{multline}\label{eq:ricci.filippo.quasi}
\tfrac{1}{2(n-1)}\Big(\widehat\Ric-\Ric\Big)(X,Y)
=
 g(A^{-1}Y, \nabla^g_X \Lambda)-g(A^{-1} \Lambda,  \Lambda )\, g(Y,   A^{-1}X)- g(\T A^{-1}  \Lambda, \Lambda )\, g(\T X,  A^{-1}Y).
\end{multline} 
Recalling that $[A^{-1},\T]=0$, that $A$ is $g$-symmetric and that the \pk metrics  are also para-Hermitian, then
\begin{equation}\label{gtALambda}
g(\T A^{-1}  \Lambda, \Lambda )=-g( A^{-1}  \Lambda, \T \Lambda )=-g(  \Lambda,  A^{-1}\T \Lambda )=-g(  \Lambda,  \T A^{-1}\Lambda )\quad \Rightarrow\quad g(\T A^{-1}  \Lambda, \Lambda )=0\,,
\end{equation}
that, substituted in \eqref{eq:ricci.filippo.quasi}, gives \eqref{main.new}.
\end{proof}
\begin{cor}\label{cor.Filippo.Einstein.1}
Let $g$ and $\hat{g}$ be pc-projectively equivalent and $A$ given by \eqref{A.def}. If $g$ and $\hat{g}$ are Einstein metrics with Einstein constants, respectively, equal to $\lambda$ and $\hat{\lambda}$, then
\begin{equation}\label{eq:nabla.Lambda}
\nabla^g\Lambda
=\left(\frac{\hat\lambda}{2(n-1)}(\det A)^{-1/2}+g(A^{-1}\Lambda,\Lambda)\right)\Id
-\frac{\lambda}{2(n-1)}\,A.
\end{equation} 
\end{cor}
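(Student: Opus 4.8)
The plan is to start from the identity \eqref{main.new} of Lemma \ref{lemma.Filippo.Einstein.1} and use the Einstein hypotheses to turn it into an algebraic relation for $\nabla^g\Lambda$. Since $g$ and $\hat g$ are Einstein with constants $\lambda$ and $\hat\lambda$, we have $\Ric = \lambda\, g$ and $\widehat\Ric = \hat\lambda\,\hat g$; moreover, by \eqref{A.def} (or equivalently \eqref{eq: companion.A}), $\hat g$ and $g$ are related by $\hat g = (\det A)^{1/2}\, g\, A^{-1}$ as bilinear forms, so that $\widehat\Ric(X,Y) = \hat\lambda\,(\det A)^{1/2}\, g(A^{-1}X, Y)$. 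Substituting both expressions into the left-hand side of \eqref{main.new} gives
\begin{equation*}
\tfrac{1}{2(n-1)}\Big(\hat\lambda\,(\det A)^{1/2}\, g(A^{-1}X,Y) - \lambda\, g(X,Y)\Big)
= g(A^{-1}Y, \nabla^g_X\Lambda) - g(A^{-1}\Lambda,\Lambda)\, g(Y, A^{-1}X)
\end{equation*}
for all vector fields $X,Y$.

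Next I would isolate $\nabla^g_X\Lambda$. Rewriting $g(X,Y) = g(A\,A^{-1}X, Y) = g(A^{-1}X, AY)$ using the $g$-symmetry of $A$, and collecting the terms that are linear in $A^{-1}Y$ versus those linear in $AY$, one sees that the identity can be rearranged so that $g(A^{-1}Y,\, \cdot\,)$ is paired, on the one side, with $\nabla^g_X\Lambda$ and, on the other side, with an explicit expression in $X$. Concretely, moving the term $-\tfrac{\lambda}{2(n-1)}g(X,Y) = -\tfrac{\lambda}{2(n-1)} g(A^{-1}Y, AX)$ and the term $-g(A^{-1}\Lambda,\Lambda)\, g(Y,A^{-1}X) = -g(A^{-1}\Lambda,\Lambda)\, g(A^{-1}Y, X)$ to the appropriate sides, the equation becomes $g\big(A^{-1}Y,\ \nabla^g_X\Lambda - (\text{explicit vector in }X)\big) = 0$ for all $Y$. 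Since $A^{-1}$ is invertible and $g$ is nondegenerate, $A^{-1}Y$ ranges over all vectors, so the second slot must vanish, yielding
\begin{equation*}
\nabla^g_X\Lambda = \Big(\tfrac{\hat\lambda}{2(n-1)}(\det A)^{-1/2} + g(A^{-1}\Lambda,\Lambda)\Big) X - \tfrac{\lambda}{2(n-1)}\, A X,
\end{equation*}
which is precisely \eqref{eq:nabla.Lambda} written out on an arbitrary $X$. (Here one should be careful with the appearance of $(\det A)^{-1/2}$ versus $(\det A)^{1/2}$: the factor $(\det A)^{1/2}$ multiplying $g(A^{-1}X,Y)$ recombines, after pulling out one power of $A^{-1}$ from the $g(A^{-1}Y,\cdot)$ pairing in the form $g(A^{-1}Y, A^{-1}X)$, into the coefficient $(\det A)^{-1/2}$ of the identity term — this bookkeeping is the one genuinely delicate point.)

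The only real obstacle is this last algebraic bookkeeping: one must carefully match which vector ($A^{-1}X$, $X$, or $AX$) sits in the free slot of $g(A^{-1}Y,\,\cdot\,)$ after each substitution, using repeatedly the $g$-symmetry of $A$ and of $A^{-1}$, and correctly track the power of $\det A$. Once the identity is written in the form "$g(A^{-1}Y, \text{vector})=0$ for all $Y$", the conclusion is immediate by nondegeneracy of $g$ and invertibility of $A$. No additional input beyond Lemma \ref{lemma.Filippo.Einstein.1}, the relation \eqref{A.def}, and the Einstein conditions is needed.
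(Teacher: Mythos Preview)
Your approach is correct and is exactly what the paper does: substitute the Einstein hypotheses into \eqref{main.new} and use the $g$-symmetry of $A$ together with nondegeneracy to read off $\nabla^g_X\Lambda$.

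There is, however, one concrete slip. You write $\hat g = (\det A)^{1/2}\, g\, A^{-1}$, but \eqref{eq: companion.A} gives $\hat g = (\det A)^{-1/2}\, g\, A^{-1}$. With the correct exponent, $\widehat\Ric(X,Y) = \hat\lambda\,(\det A)^{-1/2}\, g(A^{-1}X,Y) = \hat\lambda\,(\det A)^{-1/2}\, g(A^{-1}Y,X)$, and the coefficient $(\det A)^{-1/2}$ of the identity term in \eqref{eq:nabla.Lambda} appears immediately --- no ``recombination'' step is needed. Your parenthetical about pulling out an extra $A^{-1}$ to convert $(\det A)^{1/2}$ into $(\det A)^{-1/2}$ is therefore both unnecessary and incorrect (moving $A^{-1}$ between slots of $g$ does not produce scalar factors of $\det A$). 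Once the sign of the exponent is fixed, the whole argument is a two-line substitution, exactly as in the paper.
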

\begin{proof}
The claim follows directly from formula \eqref{main.new} by considering that both $g$ and $\hat g$ are Einstein metrics.
\end{proof}

\bigskip\noindent
Let $g$ and $\hat{g}$ be pc-projectively equivalent metrics. Let $\sigma$ and $\hat{\sigma}$, respectively,  be the associated weighted tensors via \eqref{g.sigma}. Recalling that $A=\hat \sigma \sigma^{-1}$, see \eqref{eq:Adef}, we have that
\begin{equation}\label{eq:def.A.tilde}
\tilde A_{[\alpha,\beta]}:=(\alpha \hat \sigma + \beta \sigma)\sigma^{-1}=\alpha \, A+\beta\,  \mathrm{Id}\,,\quad \alpha,\beta\in\R\,.
\end{equation}
All the values of $(\alpha,\beta)$ for which $\tilde A_{[\alpha,\beta]}$ is non-degenerate define a Benenti tensor of $g$.  In particular, for such parameters, the corresponding \pk metrics
 (cf. \eqref{eq: companion.A})
\begin{equation*}
\tilde g_{[\alpha,\beta]}=\left(\det \tilde A_{[\alpha,\beta]}\right)^{-\tfrac12}\,g \,\tilde A^{-1}_{ [\alpha,\beta]} 
\end{equation*}
form a smooth $2$-parametric family of metrics pc-projectively equivalent to $g$ (and hence also to $\hat g$).
Taking the above discussion into account, we have the following lemma.
\begin{lemma}\label{lemma:tilde.lambda}
Let $g$ and $\hat{g}$ be pc-projectively equivalent Einstein metrics with Einstein constants equal to $\lambda$ and $\tilde{\lambda}$ respectively.
Let us denote $\tilde{g}=\tilde{g}_{[\alpha,\beta]}$ and $\tilde{A}=\tilde{A}_{[\alpha,\beta]}$. Then we have that
$\widetilde\Ric=\tilde\lambda\,\tilde g$,
where
\begin{equation}\label{eq:def.tilde.lambda}
\tilde \lambda=\tilde{\lambda}_{[\alpha,\beta]}
:=2(n-1)(\det\tilde A)^{1/2}\!
\left(
\frac{\hat\lambda\alpha}{2(n-1)}(\det A)^{-1/2}
+\alpha\,g(A^{-1}\Lambda,\Lambda)
-\alpha^2\,g(\tilde A^{-1}\Lambda,\Lambda)
+\frac{\lambda\beta}{2(n-1)}
\right)\,.
\end{equation}
\end{lemma}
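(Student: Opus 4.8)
The plan is to compute the Ricci tensor $\widetilde\Ric$ of the metric $\tilde g=\tilde g_{[\alpha,\beta]}$ by applying Lemma~\ref{lemma.Filippo.Einstein.1} to the pc-projectively equivalent pair $(g,\tilde g)$, whose associated Benenti tensor is $\tilde A=\tilde A_{[\alpha,\beta]}=\alpha A+\beta\,\Id$ by \eqref{eq:def.A.tilde}. First I would identify the $1$-form $\tilde\Lambda$ attached to this pair: since $\tr\tilde A=\alpha\tr A+2n\beta$, formula \eqref{tildeL} gives $\tilde\Lambda=\tfrac14\grad\tr\tilde A=\alpha\Lambda$. Next, I would apply \eqref{main.new} with the roles $(g,\hat g,A,\Lambda)$ replaced by $(g,\tilde g,\tilde A,\tilde\Lambda)=(g,\tilde g,\tilde A,\alpha\Lambda)$, obtaining
\begin{equation*}
\tfrac{1}{2(n-1)}\bigl(\widetilde\Ric-\Ric\bigr)(X,Y)
=\alpha\,g(\tilde A^{-1}Y,\nabla^g_X\Lambda)-\alpha^2\,g(\tilde A^{-1}\Lambda,\Lambda)\,g(Y,\tilde A^{-1}X)\,.
\end{equation*}
Here I use that $\nabla^g_X\tilde\Lambda=\alpha\nabla^g_X\Lambda$.

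The key substitution is then Corollary~\ref{cor.Filippo.Einstein.1}: since $g$ and $\hat g$ are Einstein with constants $\lambda,\hat\lambda$, equation \eqref{eq:nabla.Lambda} expresses $\nabla^g\Lambda$ as a linear combination of $\Id$ and $A$. Plugging this into the term $g(\tilde A^{-1}Y,\nabla^g_X\Lambda)$, I would write $\nabla^g_X\Lambda=c_1 X-\tfrac{\lambda}{2(n-1)}AX$ with $c_1=\tfrac{\hat\lambda}{2(n-1)}(\det A)^{-1/2}+g(A^{-1}\Lambda,\Lambda)$, so that
\begin{equation*}
g(\tilde A^{-1}Y,\nabla^g_X\Lambda)=c_1\,g(\tilde A^{-1}Y,X)-\tfrac{\lambda}{2(n-1)}\,g(\tilde A^{-1}Y,AX)\,.
\end{equation*}
Now I would use the relation $A=\tfrac1\alpha(\tilde A-\beta\,\Id)$, which gives $\tilde A^{-1}A=\tfrac1\alpha(\Id-\beta\tilde A^{-1})$, hence $g(\tilde A^{-1}Y,AX)=g(A\tilde A^{-1}Y,X)=\tfrac1\alpha\bigl(g(Y,X)-\beta\,g(\tilde A^{-1}Y,X)\bigr)$, using that $A$ and $\tilde A^{-1}$ are $g$-symmetric and commute. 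Collecting all terms, the right-hand side becomes a combination of $g(Y,X)$ and $g(\tilde A^{-1}Y,X)$; since $\Ric=\lambda\,g$ (because $g$ is Einstein), $\widetilde\Ric$ is itself a combination of $g$ and $g(\tilde A^{-1}\cdot,\cdot)$. Finally, recalling from \eqref{eq: companion.A} that $\tilde g=(\det\tilde A)^{-1/2}g\,\tilde A^{-1}$, i.e. $\tilde g(X,Y)=(\det\tilde A)^{-1/2}g(\tilde A^{-1}X,Y)$, one expresses $g(\tilde A^{-1}\cdot,\cdot)=(\det\tilde A)^{1/2}\tilde g$; the coefficient of $g(Y,X)$ must then also be rewritten via $g=(\det\tilde A)^{1/2}\tilde g\,\tilde A=$ (using $\tilde g(X,Y)=(\det\tilde A)^{-1/2}g(\tilde A^{-1}X,Y)$ so $g(\tilde A Z,Y)=(\det\tilde A)^{1/2}\tilde g(Z,Y)$), leading to $g(X,Y)=(\det\tilde A)^{1/2}\tilde g(\tilde A X,Y)=(\det\tilde A)^{1/2}\tilde g(\alpha A X+\beta X,Y)$, which by symmetry of $\tilde g$ w.r.t.\ $\tilde A$ reorganizes cleanly. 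After this bookkeeping, $\widetilde\Ric=\tilde\lambda\,\tilde g$ with $\tilde\lambda$ exactly as in \eqref{eq:def.tilde.lambda}.

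The main obstacle is purely the algebraic bookkeeping: one must carefully keep track of the scalars $g(A^{-1}\Lambda,\Lambda)$, $g(\tilde A^{-1}\Lambda,\Lambda)$, the powers of $\det A$ versus $\det\tilde A$, and repeatedly use $g$-symmetry and commutativity of $A$, $\tilde A$, $\tilde A^{-1}$ to freely move these endomorphisms across the inner products, together with the vanishing identity $g(\T A^{-1}\Lambda,\Lambda)=0$ of \eqref{gtALambda} (and its analogue with $\tilde A$) to ensure no para-Hermitian cross-terms survive. The identification of the coefficient of $g(Y,X)$ with the precise expression inside the large parentheses of \eqref{eq:def.tilde.lambda} — in particular producing the three terms $\tfrac{\hat\lambda\alpha}{2(n-1)}(\det A)^{-1/2}+\alpha g(A^{-1}\Lambda,\Lambda)-\alpha^2 g(\tilde A^{-1}\Lambda,\Lambda)+\tfrac{\lambda\beta}{2(n-1)}$ — is where the computation must be done with care, but it involves no conceptual difficulty beyond the substitutions described above.
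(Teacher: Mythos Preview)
Your proposal is correct and follows essentially the same route as the paper: apply \eqref{main.new} to the pair $(g,\tilde g)$ with Benenti tensor $\tilde A$ and $\tilde\Lambda=\alpha\Lambda$, then substitute \eqref{eq:nabla.Lambda} and use $\alpha A=\tilde A-\beta\,\Id$. One small simplification you overlooked: after the substitution, the coefficient of $g(X,Y)$ in $\tfrac{1}{2(n-1)}(\widetilde\Ric-\Ric)$ is exactly $-\tfrac{\lambda}{2(n-1)}$, which cancels against $\tfrac{1}{2(n-1)}\Ric=\tfrac{\lambda}{2(n-1)}g$, so there is no need to rewrite $g(X,Y)$ in terms of $\tilde g$; only the $g(\tilde A^{-1}\cdot,\cdot)$ term survives and equals $(\det\tilde A)^{1/2}\tilde g$. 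Also, \eqref{gtALambda} is already baked into \eqref{main.new}, so you need not invoke it again.
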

\begin{proof}
Let
$$\tilde\Lambda:=\frac14  \grad_g \tr \tilde A=\frac 14  \grad_g (\alpha \tr A +2 \beta n)=\alpha\Lambda\,.$$ 
Then, we have that
\begin{multline}\label{eq:nablatildeL}
\nabla^g \tilde \Lambda=\alpha\, \nabla^g \Lambda \overset{\eqref{eq:nabla.Lambda}}{=}
\alpha\left(\tfrac{\hat\lambda}{2(n-1)} (\det A)^{-1/2} +g(A^{-1} \Lambda,  \Lambda ) \right) \mathrm{Id}    -\tfrac{\lambda}{2(n-1)} \, \alpha A=\\
 \alpha\left(\tfrac{\hat\lambda}{2(n-1)} (\det A)^{-1/2} +g(A^{-1} \Lambda,  \Lambda ) +\tfrac{\lambda\beta }{2\alpha (n-1)} \right) \mathrm{Id}    -\tfrac{\lambda}{2(n-1)} \, \tilde A.
\end{multline}
Therefore, we obtain the following relation between  $\Ric$ and $\widetilde \Ric$:
\begin{multline*}
\tfrac{2}{n-1}\Big(\widetilde\Ric-\Ric\Big)(X,Y)\overset{\eqref{main.new}}{=}
4\, g(\tilde A^{-1} \nabla^g_X \tilde \Lambda, Y)-4\, g(\tilde A^{-1} \tilde \Lambda,  \tilde \Lambda )\, g( \tilde A^{-1}X, Y)\overset{\eqref{eq:nablatildeL}}{=}\\
4\, g\left( \alpha\left(\tfrac{\hat\lambda}{2(n-1)} (\det A)^{-1/2} +g(A^{-1} \Lambda,  \Lambda ) +\tfrac{\lambda\beta }{2\alpha (n-1)} \right) \tilde A^{-1}X    -\tfrac{\lambda}{2(n-1)} \, X,\,  Y\right)
-4\, g(\tilde A^{-1} \tilde \Lambda,  \tilde \Lambda )\, g( \tilde A^{-1}X, Y)
\end{multline*}
Since we are assuming $\Ric=\lambda g$, we get 
\begin{equation*}
\frac{1}{2(n-1)}\widetilde\Ric
=(\det\tilde A)^{1/2}\!\left(\frac{\hat\lambda\alpha}{2(n-1)}(\det A)^{-1/2}+\alpha\,g(A^{-1}\Lambda-\tilde A^{-1}\tilde\Lambda,\Lambda)+\frac{\lambda\beta}{2(n-1)}\right)\tilde g,
\end{equation*}
proving the claim.
\end{proof}

\subsection*{Proof of Theorem \ref{thm:main.einstein}}

In view of Lemma \ref{lemma:tilde.lambda}, the theorem is proved if we show that $\tilde{\lambda}_{[\alpha,\beta]}$ defined in \eqref{eq:def.tilde.lambda} are actually constants for each $\alpha$ and $\beta$, implying that $\tilde{g}_{[\alpha,\beta]}$ are Einstein metrics with Einstein constants equal to $\tilde{\lambda}_{[\alpha,\beta]}$. To do this, it will be sufficient to show that $d\tilde{\lambda}=0$.
In order to prove it, we need some equalities, established below:
\begin{multline}\label{i}
\bullet \Big(d\,(\det\tilde A)^{1/2}\Big)(X)=\tfrac12(\det\tilde A)^{1/2}d(\log\det\tilde A)\,(X)= \tfrac12(\det\tilde A)^{1/2}\,\tr(\tilde A^{-1}\,\nabla^g_X\tilde A)\overset{\eqref{eq:def.A.tilde}}{=}\\
\tfrac12(\det\tilde A)^{1/2}\alpha\,\tr(\tilde A^{-1}\,\nabla^g_X A)\overset{\eqref{eq.A}}{=}
2\alpha\,(\det\tilde A)^{1/2}\,g(\tilde A^{-1}\Lambda,X)\,,
\end{multline}
\begin{multline}
\bullet \Big(d\,(\det A)^{-1/2}\Big)(X)=-\tfrac12(\det A)^{-1/2}d(\log\det A)\,(X)
\overset{\eqref{eq:dtrA.dlogdetA}}{=}-2(\det A)^{-1/2}\,g(A^{-1}\Lambda,X)\,, \hspace{2cm} 
\end{multline}
\begin{multline}
\bullet \Big(d\,\big(g(A^{-1}\Lambda,\Lambda)\big)\Big)\, (X)
\overset{\substack{\mathrm{A^{-1} is} \\ \mathrm{g-symmetric}}}{=} g\big((\nabla^g_X A^{-1})\Lambda,\Lambda\big)+2\,g\big(A^{-1}\Lambda,\nabla^g_X\Lambda\big)\overset{\eqref{eq:nabla.Lambda}}{=}\\
-g\big((\nabla^g_XA)A^{-1}\Lambda,\,A^{-1}\Lambda\big)+2\,\big(\tfrac{\hat\lambda}{2(n-1)}(\det A)^{-1/2}+g(A^{-1}\Lambda,\Lambda)\big)\,g(A^{-1}\Lambda,X)-\tfrac{\lambda}{n-1}\,g(\Lambda,X)\overset{\eqref{eq.A}+\eqref{gtALambda}}{=}\\
\frac{\hat\lambda}{\,n-1\,}(\det A)^{-1/2}\,g(A^{-1}\Lambda,X)-\frac{\lambda}{\,n-1\,}\,g(\Lambda,X)\,,
\end{multline}
\begin{multline}\label{iv}
\bullet \Big(d\,\big(g(\tilde A^{-1}\Lambda,\Lambda)\big)\Big)\, (X)
\overset{\substack{\mathrm{\tilde A^{-1} is}  \\ \mathrm{g-symmetric}}}{=} g\big((\nabla^g_X \tilde A^{-1})\Lambda,\Lambda\big)+2\,g\big(\tilde A^{-1}\Lambda,\nabla^g_X\Lambda\big)\overset{(*)}{=}\\
-2\alpha\,g(\Lambda,\tilde A^{-1}\Lambda)\,g(\tilde A^{-1}\Lambda,X)
+2\Big(\tfrac{\hat\lambda}{2(n-1)}(\det A)^{-1/2}+g(A^{-1}\Lambda,\Lambda)\Big)\,g(\tilde A^{-1}\Lambda,X)
-\frac{\lambda}{\,n-1\,}\,g(A\tilde A^{-1}\Lambda,X)\,.
\end{multline}
$(*)$ In this step we used $\nabla^g\tilde A^{-1}=-\alpha\, \tilde A^{-1}(\nabla^gA)\tilde A^{-1}$.

\bigskip\noindent
Finally, on account of \eqref{i}-\eqref{iv}, in view of \eqref{eq:def.A.tilde},
we can easily compute
\begin{multline*}
\tfrac{1}{2(n-1)}d \tilde \lambda\,(X)=
\frac{\lambda\alpha}{n-1}\, (\det\tilde A)^{1/2}
\left(\beta
\,g(\tilde A^{-1}\Lambda,X)-g(\Lambda,X)
+\alpha\,g(A\tilde A^{-1}\Lambda,X)  \right)=\\
\frac{\lambda\alpha}{n-1}\, (\det\tilde A)^{1/2}
\,g\left(\big((\alpha\,  A+  \beta\,  \Id) \tilde A^{-1}-\Id\big)\Lambda, X\right)=0.
\end{multline*}
implying $d\tilde\lambda=0$, as we wanted.

\section{Preliminary results needed for the proof of Theorem \ref{thm:main}}\label{sec:preliminary}

In the previous sections we obtained results valid in arbitrary dimension. From now on, we restrict our attention to \emph{\pk surfaces}, i.e., \pk manifolds of para-complex dimension $n$ equal to $2$.

\smallskip
As stated in Theorem \ref{thm:main}, our local classification holds  ``in a neighborhood of almost every point of $M$''; concretely, this means ``on the dense open subset $\widehat M^{\circ}\subseteq M$'' constructed in Section \ref{sec:gradfields}. In Section \ref{sec:partition}, we prepare this construction by defining the preliminary dense open set $M^{\circ}\subseteq M$. Furthermore, Section \ref{sec:gradfields}  clarifies the geometric criteria underlying the distinction among the various normal forms appearing in Theorem \ref{thm:main} (cf. also Table \ref{tab:ranks}).

\smallskip
While Theorem \ref{thm:main} concerns  Benenti tensors, the arguments developed below apply to any element of $\mathcal{A}(g,\T)$. 

\subsection{Eigenvalue partition of \pk surfaces: construction of an open dense subset}\label{sec:partition}

Let $(M,g,\T)$ be a \pk surface and let $A\in \mathcal{A}(g,\T)$.  In view of Remark \ref{rem:sqrtA}, the $(1,1)$ tensor field $A$ has at most two distinct eigenvalues. 
%
\begin{center}
\textbf{\underline{Notation}:} From now on,
we denote the eigenvalues of $A$ by $\rho$ and $\sigma$, unless otherwise specified\,.
\end{center}
%
The eigenvalue structure of $A$ induces a natural partition of $M$ according to the spectral type of $A$ at each point $p\in M$. More precisely, we define the following  three subsets of $M$: 
$$
\begin{aligned}
\bullet\quad p &\in M^{\mathrm{r}}  &&\text{if the eigenvalues $\rho(p)$ and $\sigma(p)$ of $A(p)$ are real and distinct, each of algebraic multiplicity $2$,} \\
\bullet\quad p &\in M^{\mathrm{c}}  &&\text{if the eigenvalues $\rho(p)$ and $\sigma(p)$ of $A(p)$ are complex-conjugate, each of algebraic multiplicity $2$,} \\
\bullet\quad p &\in M^{\mathrm{sing}} &&\text{if $A(p)$ has a single real eigenvalue of algebraic multiplicity $4$.}
\end{aligned}
$$

\smallskip\noindent
It follows that $M^{\mathrm{r}}$ and $M^{\mathrm{c}}$ are open subsets, whereas $M^{\mathrm{sing}}$ is closed.  
Indeed,
$$
M^{\mathrm{r}} = \Delta^{-1}\bigl((0,\infty)\bigr), \qquad 
M^{\mathrm{c}} = \Delta^{-1}\bigl((-\infty,0)\bigr), \qquad
M^{\mathrm{sing}} = \Delta^{-1}(0),
$$
where 
$$
\Delta := \mu_1^2 - 4\mu_2
$$
is the discriminant of the characteristic polynomial of $A$, and the functions $\mu_i$ are defined by \eqref{eq:mu}.
Thus, the eigenvalues $\rho$ and $\sigma$ are given by the functions
$$
\frac{-\mu_1\pm\sqrt{\Delta}}{2},
$$
that turn out to be continuous on $M$, but smooth only on $M^{\mathrm{r}}\cup M^{\mathrm{c}}$. 
Then, if neither $\rho$ nor $\sigma$ is constant on the whole of $M$, we introduce the open subset
$$
M^\circ := \left\{ p \in M \;\middle|\; \rho(p) \neq \sigma(p),\ d\rho(p) \neq 0,\ d\sigma(p) \neq 0 \right\}.
$$
In the special case in which $\sigma$ is constant on $M$, we instead set 
$$
M^\circ := \left\{ p \in M \;\middle|\; \rho(p) \neq \sigma,\ d\rho(p) \neq 0 \right\}.
$$
In Section \ref{sec:gradfields} we shall further restrict this set to  a (possibly disconnected) open dense subset 
$\widehat M^{\circ}\subseteq M^{\circ}$, 
where the distribution $D$ defined by \eqref{eq:D} has constant rank on each connected component.
Our aim in this section is to show that $M^{\circ}$ is dense in $M$.
\begin{lemma}\label{prop:Msing}
Let $(M,g,\T)$ be a connected \pk surface and let $A\in\mathcal{A}(g,\T)$ be 
non-parallel. Then the open set $M^{\mathrm{r}}\cup M^{\mathrm{c}}$ is dense in $M$.
\end{lemma}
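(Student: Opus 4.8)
The plan is to show that the complement $M^{\mathrm{sing}}$ of $M^{\mathrm{r}}\cup M^{\mathrm{c}}$ has empty interior, i.e., that $A$ cannot have a single eigenvalue of multiplicity four on a nonempty open set without being parallel there; since $M^{\mathrm{r}}\cup M^{\mathrm{c}}$ is open, density then follows. So I would argue by contradiction: suppose there is a nonempty open connected $U\subseteq M^{\mathrm{sing}}$, on which $A$ has a single real eigenvalue $\rho$ of multiplicity four. On $U$ the function $\rho$ is smooth, being a root of a polynomial with no branching (the discriminant vanishes identically on $U$, so $\rho=-\mu_1/2$, hence $\rho\in C^\infty(U)$). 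Then $\tr A=4\rho$, so by \eqref{tildeL} we have $\Lambda=\grad\rho$, and $\det A=\rho^4$, so $\sqrt{\det A}=\rho^2$.

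Next I would exploit equation \eqref{eq.A} together with the block structure of $A$ from Remark \ref{rem:sqrtA}. In null coordinates $A$ is block-diagonal with two identical $2\times 2$ blocks $A'$; having a single eigenvalue of multiplicity four means each $A'$ has the single eigenvalue $\rho$. The key dichotomy is whether $A'$ is diagonalizable (hence $A'=\rho\,\Id_2$, so $A=\rho\,\Id_4$) or not. If $A=\rho\,\Id_4$ on $U$, I substitute into \eqref{eq.A}: the left side is $\nabla^g_X(\rho\,\Id)=(d\rho(X))\,\Id=(X^\flat(\Lambda^{\sharp\,\prime}))\cdots$—more precisely, $\nabla^g_X A=X(\rho)\,\Id$, and the right-hand side must equal $X^\flat\otimes\Lambda+\Lambda^\flat\otimes X-(\T X)^\flat\otimes\T\Lambda-(\T\Lambda)^\flat\otimes\T X$ with $\Lambda=\grad\rho$. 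Evaluating this tensor identity on a generic pair of vectors and choosing $X,Y$ appropriately (e.g. $X$ null in $T^+$, $Y$ null in $T^-$, or $X=Y$) forces $d\rho=0$ on $U$; but then $\nabla^g_X A=0$, i.e. $A$ is parallel on $U$, contradicting the hypothesis (by unique continuation / connectedness of $M$, a parallel-on-an-open-set tensor satisfying the linear equation \eqref{eq.A} with $\Lambda=0$ would be parallel on all of $M$, but actually it suffices that $A$ is non-parallel somewhere to already get the contradiction with the assumption that $U$ is nonempty and $A|_U$ parallel—one should phrase the statement as: $A$ non-parallel means $\nabla^g A\not\equiv 0$, and we would need the stronger local conclusion; I'd instead directly conclude $\nabla^g A=0$ on $U$ and note this is what we rule out, being careful to state it correctly).

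The remaining, and I expect main, case is when $A'$ is a nontrivial Jordan block with eigenvalue $\rho$, i.e. $A-\rho\,\Id$ is nilpotent of rank two but nonzero. Here I would plug $A-\rho\,\Id$ (which also lies in $\mathcal{A}(g,\T)$ by linearity, and whose trace is zero, so the corresponding $\Lambda$ vanishes... wait, no: $\tr(A-\rho\Id)$ is not constant) — actually the clean move is to work with $N:=A-\rho\,\Id$ directly using \eqref{eq.A} for $A$. Differentiating $N^2=0$ (since $N$ is nilpotent of square zero in each $2\times 2$ block) gives $(\nabla^g_X N)N+N(\nabla^g_X N)=0$, and since $\nabla^g_X N=\nabla^g_X A-(d\rho(X))\Id$, combining with \eqref{eq.A} yields an algebraic constraint on $\Lambda$, $X$, and $N$. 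Contracting this identity with $N$, taking traces, and using $g$-symmetry of $A$ and $[A,\T]=0$ together with $\det N=0$ should force $\Lambda$ into the image of $N$ and eventually $\Lambda=0$, whence again $\nabla^g A=(d\rho)\otimes\Id$ with $d\rho=0$ from \eqref{eq.A}, contradicting $A$ non-parallel. The delicate point—the main obstacle—is handling the nilpotent (non-semisimple) case cleanly: the tensorial algebra with $N$, $\T$, $g$ must be organized so that one genuinely extracts $\Lambda=0$ rather than a weaker conclusion, and one must be careful that the argument is local (valid on $U$) yet the hypothesis "$A$ non-parallel" is global—so I would phrase the conclusion of the by-contradiction argument as "$\nabla^g A=0$ on $U$", and then separately note that, since $\mathcal{A}(g,\T)$-tensors satisfy the linear PDE \eqref{eq.A} which is of finite type, $\nabla^g A\equiv 0$ on $U$ forces $\nabla^g A\equiv 0$ on all of the connected $M$ (prolongation/analyticity of solutions), contradicting the hypothesis that $A$ is non-parallel. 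With $M^{\mathrm{sing}}$ having empty interior, its complement $M^{\mathrm{r}}\cup M^{\mathrm{c}}$ is dense, completing the proof.
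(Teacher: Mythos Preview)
Your overall contradiction strategy is correct, and your diagonalizable sub-case can be made rigorous. The genuine gap is in the nilpotent case: differentiating $N^2=0$ and combining with \eqref{eq.A} does \emph{not} force $\Lambda=0$. Concretely, writing $B$ for the right-hand side of \eqref{eq.A}, the identity $(\nabla^g_X N)N+N(\nabla^g_X N)=0$ becomes $BN+NB=2g(\Lambda,X)N$; taking the trace yields only $g(N\Lambda,X)=0$ for all $X$, i.e.\ $N\Lambda=0$, so $\Lambda\in\ker N=\operatorname{im}N$. But $\operatorname{im}N$ is a $2$-dimensional totally isotropic, $\T$-invariant subspace (since $g(NX,NY)=g(X,N^2Y)=0$), and one checks in a Jordan basis that the remaining pointwise constraints are satisfied by any $\Lambda\in\operatorname{im}N$. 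Hence the algebra you outline cannot close without invoking higher-order differential consequences, which you do not supply.

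The paper sidesteps the diagonalizable/nilpotent dichotomy entirely. Since $\mu_1=\tr A'$ and $\mu_2=\det A'$ are globally defined, Corollary~\ref{cor:mui.killing} makes $\T V_1$ and $\T V_2$ global Killing fields; on $U$ one has $\mu_1=2\rho$, $\mu_2=\rho^2$ regardless of the Jordan type of $A'$. The conserved quantities $g(\T V_i,\dot\gamma)$ along geodesics then give that $d\rho(\T\dot\gamma)$ and $\rho\,d\rho(\T\dot\gamma)$ are both constant along every geodesic in $U$; whenever the first is nonzero, $\rho$ is constant along that geodesic, and a density/continuity argument yields $\rho$ constant on $U$. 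Then $\T\grad\tr A=0$ on $U$, and Killing unique continuation (not a general finite-type argument for $A$ itself, as you suggest) propagates this to all of $M$, whence $\Lambda\equiv 0$ and $A$ is parallel by \eqref{eq.A}. This is both shorter and avoids the algebraic obstruction you run into.
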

\begin{proof}
Suppose, by contradiction, that $M^{\mathrm{sing}}$ contains a nonempty open subset $U\subseteq M$.
Then, on $U$, the tensor $A$ admits a single real eigenvalue $\rho$ of algebraic multiplicity 4.  
By Proposition \ref{prop:hamilt} and Corollary \ref{cor:mui.killing}, both $\rho$ and $\rho^2$ are Hamiltonian functions for Killing vector fields on $U$.  
In particular, a direct computation shows that $d\rho\circ\T$ and $d(\rho^2)\circ\T$ are constant along any geodesics. This implies that $\rho$ itself is constant on $U$.
Consequently, using the block form of $A$ introduced in Remark \ref{rem:sqrtA},  the Killing vector field $\T\grad \tr A'$ (which coincides with $\tfrac12 \T\grad \tr A$) vanishes on $U$, then it  vanishes on the whole of $M$, being $U$ an open set, by the unique continuation property of Killing fields. 
Equation \eqref{eq.A} then shows that $A$ is parallel, contradicting the hypothesis.  
Therefore, $M^{\mathrm{sing}}$ has empty interior, and $M^{\mathrm{r}}\cup M^{\mathrm{c}}$ is dense in $M$.
\end{proof}

If $A$ is non-parallel, there cannot exist a non-empty open subset on which both eigenvalues of $A$ are constant.  
Indeed, on such an open set, the Killing vector field $\T\grad \tr A$ would vanish identically and, as we said above, it would then vanish on whole of $M$, so that equation \eqref{eq.A} would imply that $A$ is parallel: a contradiction. Therefore, in the remaining part of the paper, we will adopt the following
\begin{center}
\textbf{\underline{Assumption}:} w.l.o.g. we assume that $\rho$ is an eigenvalue of $A$ non-constant on the whole of $M$.  
\end{center}

\smallskip
\begin{prop}\label{prop:Mdense}
Let $(M,g,\T)$ be a para-Kähler surface and let $A\in\mathcal{A}(g,\T)$ be non-parallel.  
Then $M^\circ$ is dense in $M$.
\end{prop}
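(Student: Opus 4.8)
The plan is to show that the complement $M\setminus M^\circ$ has empty interior, arguing by contradiction: suppose $U\subseteq M\setminus M^\circ$ is a non-empty open set. By Lemma \ref{prop:Msing}, the set $M^{\mathrm r}\cup M^{\mathrm c}$ is dense, so we may shrink $U$ and assume $U\subseteq M^{\mathrm r}\cup M^{\mathrm c}$; on such a $U$ both eigenvalues $\rho,\sigma$ are smooth. Recall that the standing Assumption is that $\rho$ is non-constant on all of $M$. On $U$ the failure of membership in $M^\circ$ must come from one of the defining conditions being violated at every point of $U$: either $\rho=\sigma$ identically on $U$, or $d\rho=0$ identically on $U$, or (in the non-constant-$\sigma$ case) $d\sigma=0$ identically on $U$. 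I will rule out each possibility in turn, in each case deducing that $A$ is parallel and contradicting the hypothesis.

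First I would dispose of the case $d\rho\equiv 0$ on $U$. Then $\rho$ is constant on $U$, so by Corollary \ref{cor:mui.killing} and the relation $\tfrac12\tr A = \tfrac12(\tr A')$ together with the block form of Remark \ref{rem:sqrtA}, the Killing vector field $\T\grad\tr A$ — which is essentially $\T V_1$ up to a multiplicative constant — vanishes on $U$ (here one uses that on $M^{\mathrm r}$ one has $\tr A' = \rho+\sigma$ and on $M^{\mathrm c}$ one has $\tr A' = \rho+\bar\rho$, so if $\rho$ is constant so is $\sigma$, hence $\tr A$ is constant on $U$; the argument that $d\rho\circ\T$ constant along geodesics forces $\sigma$ constant is exactly the one used in Lemma \ref{prop:Msing}). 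A Killing field vanishing on a non-empty open subset of a connected manifold vanishes identically by the unique continuation property, so $\T\grad\tr A\equiv 0$ on $M$; then $\Lambda=\tfrac14\grad\tr A=0$ and equation \eqref{eq.A} gives $\nabla^g A=0$, i.e. $A$ is parallel, a contradiction. The case $d\sigma\equiv 0$ on $U$ (in the non-constant-$\sigma$ setting) is symmetric: one argues $\rho$ must then also be constant on $U$ via the same Killing/geodesic argument applied to the symmetric functions of the eigenvalues, again forcing $\tr A$ constant on $U$ and hence $A$ parallel.

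Next the case $\rho\equiv\sigma$ on $U$. Then on $U$ the tensor $A$ has a single eigenvalue of multiplicity $4$, so $U\subseteq M^{\mathrm{sing}}$, directly contradicting $U\subseteq M^{\mathrm r}\cup M^{\mathrm c}$; alternatively, without invoking the shrinking step, one runs the argument of Lemma \ref{prop:Msing} verbatim on $U$ to conclude $\rho$ is constant on $U$, reducing to the previous paragraph. Combining the three cases, no non-empty open $U$ can lie in the complement of $M^\circ$, so $M^\circ$ is dense in $M$.

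The main obstacle is organizational rather than deep: one must be careful that "not in $M^\circ$" is a disjunction of conditions, so a priori different points of $U$ could fail for different reasons — strictly speaking one should first pass to an open subset of $U$ on which a single one of the conditions fails everywhere, which is possible because the three "bad" loci $\{\rho=\sigma\}$, $\{d\rho=0\}$, $\{d\sigma=0\}$ are closed in $M^{\mathrm r}\cup M^{\mathrm c}$ and their union contains $U$, so by a Baire-category argument one of them has non-empty interior inside $U$. The only genuinely substantive input is the observation, borrowed from the proof of Lemma \ref{prop:Msing}, that constancy of $\rho$ on an open set forces constancy of the other symmetric invariant (hence of $\sigma$), which rests on the fact that $d\rho\circ\T$ and $d(\rho^2)\circ\T$ are first integrals of the geodesic flow because $\T\grad\rho$ and $\T\grad\rho^2$ are Killing (Corollary \ref{cor:mui.killing}, Proposition \ref{prop:hamilt}); granting that, everything reduces to the unique continuation property for Killing fields together with equation \eqref{eq.A}.
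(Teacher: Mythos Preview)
Your overall architecture is sound: reduce to an open $U$ on which a single defining condition of $M^\circ$ fails, then derive a contradiction via unique continuation for a Killing field. The cases $\rho\equiv\sigma$ on $U$ and $U\subseteq M^{\mathrm c}$ are handled correctly. However, there is a genuine gap in the case $d\rho\equiv 0$ on an open $U\subseteq M^{\mathrm r}$.

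You assert that constancy of $\rho$ on $U$ forces constancy of $\sigma$ on $U$, invoking ``exactly the argument used in Lemma~\ref{prop:Msing}''. But that argument crucially used that on $M^{\mathrm{sing}}$ one has $\mu_1=2\rho$ and $\mu_2=\rho^2$, so that \emph{both} $\rho$ and $\rho^2$ are Hamiltonians of Killing fields. On $M^{\mathrm r}$ one has instead $\mu_1=\rho+\sigma$ and $\mu_2=\rho\sigma$; if $\rho=\rho_0$ is constant on $U$, then on $U$ the Killing Hamiltonians $\mu_1,\mu_2$ reduce to $\rho_0+\sigma$ and $\rho_0\sigma$, which carry the \emph{same} information about $\sigma$. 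There is no second independent constraint forcing $\sigma$ constant, so your Killing field $\T\grad\tr A=2\T\grad\sigma$ need not vanish on $U$, and the unique continuation step does not fire.

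The paper repairs exactly this point by choosing a different Killing field. With $\rho_0$ the constant value of $\rho$ on $U$, set $\mu(\rho_0)=\rho_0^2-\mu_1\rho_0+\mu_2=(\rho-\rho_0)(\sigma-\rho_0)$. Then $\T\grad\mu(\rho_0)=-\rho_0\,\T V_1+\T V_2$ is a constant-coefficient combination of Killing fields, hence Killing; and on $U$ both $\rho-\rho_0=0$ and $d\rho=0$, so $\grad\mu(\rho_0)=0$ there regardless of what $\sigma$ does. Unique continuation now gives $\mu(\rho_0)\equiv 0$ on $M$, so $\rho_0$ is an eigenvalue everywhere, contradicting the standing Assumption that $\rho$ is globally non-constant. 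Replacing your $\T V_1$ by $\T V_2-\rho_0\,\T V_1$ is the missing idea.
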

\begin{proof}
Let $U \subseteq M$ be  an  open set where the differential $d\rho$ vanishes.  

\smallskip
Let firstly assume that $U \cap M^{\mathrm{r}} \neq \emptyset$.  
Consider the Killing vector fields (for each $t$)
$$
\T\grad \mu(t)\,,\qquad \text{where} \quad \mu(t) := t^2 - \mu_1 t + \mu_2.
$$
Since $\rho$ is constant on $U$, then $\mu(\rho)$ vanishes on it. Hence, the corresponding vector field $\T\grad\mu(\rho)$ vanishes identically on $U$.  
Since it is a Killing vector field, it must then vanish everywhere on $M$, as we already said above, in view of the unique continuation property. 
Hence $\rho$ is constant on the whole of $M$, contradicting the above Assumption.

\smallskip

Let now assume $U \cap M^{\mathrm{c}} \neq \emptyset$.  
Being $\rho$  constant on $U$, its complex conjugate eigenvalue $\sigma = \bar{\rho}$ is also constant there. It contradicts the fact that both eigenvalues of a non-parallel tensor field $A\in\mathcal{A}(g,\T)$ cannot be constant on the same open subset of $M$, as we underline after the proof of Lemma \ref{prop:Msing}. 

\smallskip
Since  $M^{\mathrm{r}}\cup M^{\mathrm{c}}$ is dense in $M$ by Lemma \ref{prop:Msing}, then $U=\emptyset$.
\end{proof}
\begin{rem}\label{rem:eigen.cases}
The previous proposition yields the following global picture of the eigenvalues of a non-parallel $A\in\mathcal A(g,\T)$.  
Exactly one of the following alternatives holds:
\begin{enumerate}
\item both eigenvalues are non-constant on $M$

or 

\item exactly one of them is constant on $M$, while the other is non-constant.
\end{enumerate}
In particular, it is impossible for an eigenvalue to be constant merely on a nonempty proper open subset of $M$.
\end{rem}
This completes the construction of the dense open subset $M^{\circ}$.  

\subsection{Para-holomorphic gradient fields and the distribution $D$}\label{sec:gradfields}

In this section we develop the geometric framework needed for the local description contained in Theorem \ref{thm:main}.  Besides the Killing vector fields introduced in Section \ref{sec:killing}, we identify additional vector fields naturally associated with tensor fields in $\mathcal{A}(g,\T)$, which will play a key role in Section \ref{sec:proof} for the construction of convenient coordinates.

\smallskip
With a slight abuse of notation, we use the same symbol $A$ to denote the $\C$-linear extension of a tensor field $A \in \mathcal{A}(g,\T)$ to the complexified tangent bundle $T^\C M$.  
Likewise, the \pk metric $g$, the Levi-Civita connection $\nabla^g$ and the para-complex structure $\T$ will always be understood as $\C$-linearly extended to $T^\C M$ whenever necessary. 

\begin{lemma}\label{lem:grad.eigenvectors}
Let $(M,g,\T)$ be a \pk surface and let $A \in \mathcal{A}(g,\T)$ be non-parallel.  
Then the gradients of  the eigenvalue functions $\rho$ and $\sigma$ of $A$ satisfy 
$$
A\grad \rho = \rho\, \grad \rho, \qquad 
A\grad \sigma = \sigma\, \grad \sigma.
$$
Furthermore, the eigenspaces of $A$ relative to $\rho$ and $\sigma$ are $g$-orthogonal.
\end{lemma}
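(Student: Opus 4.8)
The proof rests on the fundamental equation \eqref{eq.A}. The strategy is to differentiate the eigenvalue relation and exploit the very specific rank-one structure of the right-hand side of \eqref{eq.A}. Concretely, I would start from the identity $A\,\grad\rho=\rho\,\grad\rho$ that I want to prove and instead prove it by contracting \eqref{eq.A} appropriately; the key observation is that, on $M^{\mathrm r}\cup M^{\mathrm c}$, $\rho$ and $\sigma$ are smooth and the characteristic polynomial of $A$ is $(t-\rho)^2(t-\sigma)^2$, so $\tr A=2(\rho+\sigma)$ and $\sqrt{\det A}=\rho\sigma$ (in the block notation of Remark \ref{rem:sqrtA}, with $\det A'=\rho\sigma$ and $\tr A'=\rho+\sigma$). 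Hence $\Lambda=\tfrac14\grad\tr A=\tfrac12(\grad\rho+\grad\sigma)$.

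\textbf{Key steps.} First, I would apply $A$ to both sides of \eqref{eq.A}; since $[A,\T]=0$ and $A$ is $g$-symmetric, and using $\nabla^g_X(A^2)=(\nabla^g_XA)A+A(\nabla^g_XA)$, one gets an analogous equation for $\nabla^g_X(A^2)$ whose right-hand side again has the rank-one-in-$X$ structure, now with $\Lambda$ replaced by $A\Lambda$ (using the symmetry $X^\flat\otimes A\Lambda+(A\Lambda)^\flat\otimes X$, etc.). Comparing with what one gets by differentiating directly the Cayley–Hamilton-type relation $A^2=\tfrac12(\tr A)\,A-\sqrt{\det A}\,\mathrm{Id}$ (valid in dimension four by Remark \ref{rem:sqrtA}, where on each $2\times2$ block $A'^2=(\tr A')A'-(\det A')\mathrm{Id}$), one obtains a linear relation among $\grad\rho$, $\grad\sigma$ and their images under $A$. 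More directly: differentiate $A'^2-(\rho+\sigma)A'+\rho\sigma\,\mathrm{Id}=0$ (equivalently work with the full $A$), substitute \eqref{eq.A} for $\nabla^g_XA$ and the derived equation for $\nabla^g_X(A^2)$, and collect terms. The $\mathrm{Id}$-component of the resulting identity yields, after tracing or pairing with a suitable vector, the scalar relations $d\rho(A\,\grad\rho-\rho\grad\rho)=0$ type identities; the cleanest route is to observe that \eqref{eq.A} forces $(A-\rho\,\mathrm{Id})\grad\rho$ and $(A-\sigma\,\mathrm{Id})\grad\sigma$ to vanish by projecting \eqref{eq.A} onto the $\rho$- and $\sigma$-eigenbundles of $A$ (which are smooth subbundles on $M^{\mathrm r}\cup M^{\mathrm c}$). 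Once $A\grad\rho=\rho\grad\rho$ and $A\grad\sigma=\sigma\grad\sigma$ are established, $g$-orthogonality of the two eigenspaces is automatic: if $A u=\rho u$ and $A v=\sigma v$ with $\rho\neq\sigma$, then $\rho\,g(u,v)=g(Au,v)=g(u,Av)=\sigma\,g(u,v)$ since $A$ is $g$-symmetric, forcing $g(u,v)=0$; applying this with $u=\grad\rho$, $v=\grad\sigma$ (and more generally to the full eigenbundles) gives the claim.

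\textbf{Main obstacle.} The delicate point is not the algebra but making precise the projection of \eqref{eq.A} onto the eigenbundles of $A$: one must check that the eigenbundles are genuine smooth distributions on $M^{\mathrm r}\cup M^{\mathrm c}$ (true since the eigenvalues are distinct and smooth there, so the spectral projectors are smooth) and that no degeneracy of $\grad\rho$ or $\grad\sigma$ on a dense subset causes trouble — but on $M^\circ$ we have $d\rho\neq0$ and $d\sigma\neq0$ by construction, and $M^\circ$ is dense by Proposition \ref{prop:Mdense}, so the identity, once proved on $M^\circ$, extends by continuity. A secondary subtlety is the complex-conjugate case $M^{\mathrm c}$: there one works with the $\C$-linear extensions (as announced just before the lemma), so $\rho$, $\sigma=\bar\rho$, $\grad\rho$, $\grad\sigma$ are complex and one argues identically over $T^\C M$, the $g$-symmetry of $A$ and the pairing argument going through verbatim for the $\C$-bilinear extension of $g$.
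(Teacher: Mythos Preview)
Your proposal has the right ingredients but leaves the decisive step unproved. The Cayley--Hamilton route you sketch, once you trace against $A$, yields only the combined relation $A\grad\rho + A\grad\sigma = \rho\,\grad\rho + \sigma\,\grad\sigma$ (equivalently $2A\Lambda=\rho\grad\rho+\sigma\grad\sigma$); tracing against higher powers of $A$ returns nothing new precisely because of Cayley--Hamilton. Your ``$d\rho(A\grad\rho-\rho\grad\rho)=0$ type identities'' are therefore too weak to force $(A-\rho)\grad\rho=0$, and the clause ``by projecting \eqref{eq.A} onto the $\rho$- and $\sigma$-eigenbundles'' is exactly where the content of the lemma is hiding: you state the conclusion there without indicating which vector to plug in or which pairing extracts it. The orthogonality of eigenspaces via $g$-symmetry, the density argument, and the passage to $T^{\C}M$ on $M^{\mathrm c}$ are all fine and match the paper.

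The paper's argument is more direct and avoids Cayley--Hamilton entirely. It first records orthogonality (as you do): for $X\in E_\rho$, $Y\in E_\sigma$ one has $g(X,Y)=g(\T X,Y)=0$ since $A$ is $g$-symmetric and $[A,\T]=0$. Then it differentiates the eigenvector relation $AY=\sigma Y$ along an arbitrary $X$ and substitutes \eqref{eq.A}, obtaining
\[
(A-\sigma)\nabla^g_X Y \;=\; X(\sigma)\,Y - g(X,Y)\Lambda - g(\Lambda,Y)X + g(\T X,Y)\T\Lambda + g(\T\Lambda,Y)\T X.
\]
Taking $X\in E_\rho$ kills the $g(X,Y)$ and $g(\T X,Y)$ terms by orthogonality; pairing the remaining identity with $Y$ and using $g$-symmetry of $A$ annihilates the left-hand side, giving $X(\sigma)\,g(Y,Y)=0$. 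Since $g$ restricted to each eigenspace has signature $(1,1)$ (Remark~\ref{rem:sqrtA}), one can choose $Y\in E_\sigma$ non-isotropic, whence $X(\sigma)=0$ for all $X\in E_\rho$. Thus $\grad\sigma\perp E_\rho$, and by the orthogonal splitting $TM=E_\rho\oplus E_\sigma$ with $g|_{E_\sigma}$ non-degenerate, $\grad\sigma\in E_\sigma$. The idea you are missing is precisely this pairing against a non-isotropic eigenvector; once you see it, the Cayley--Hamilton scaffolding is unnecessary.
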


\begin{proof}
The computation can be carried out on the dense open subset $M^{\mathrm r} \cup M^{\mathrm c}$, where the eigenvalues are smooth and distinct (see Lemma \ref{prop:Msing}).

\smallskip
Let $Y \in \Gamma(T^\C M)$ be an eigenvector of $A$ corresponding to the eigenvalue $\sigma$, i.e., 
\begin{equation}\label{eq:sigma.eigenv.A}
A Y = \sigma Y\,.
\end{equation}
Then,  for any $X \in \Gamma(T^\C M)$, by considering the covariant derivative of  \eqref{eq:sigma.eigenv.A} along $X$, equation \eqref{eq.A} yields
\begin{equation}\label{eq:Asigmanabla}
(A - \sigma\,\mathrm{Id})\, \nabla_X^g Y
= X(\sigma)\, Y - g(Y,X)\,\Lambda - g(Y,\Lambda)\,X
+ g(Y, \T X)\, \T \Lambda + g(Y, \T \Lambda)\, \T X.
\end{equation}
Now, take $X$ to be an eigenvector of $A$ with eigenvalue $\rho$.  
Since $A$ is $g$-symmetric and commutes with $\T$, we  obtain
\begin{equation}\label{eq:gothogonal}
g(X,Y)=0, \qquad g(\T X, Y)=0 \quad \text{on } M^{\mathrm r} \cup M^{\mathrm c}.
\end{equation}
Being $M^{\mathrm r} \cup M^{\mathrm c}$ dense in $M$ (see Lemma \ref{prop:Msing}), identity \eqref{eq:gothogonal} extends to the whole of $M$ by continuity, so that we obtain that the eigenspaces relative to $\rho$ and $\sigma$ are $g$-orthogonal on the whole of $M$.

\smallskip
Furthermore, in view of \eqref{eq:gothogonal}, equation \eqref{eq:Asigmanabla} simplifies to
$$
(A - \sigma\, \mathrm{Id})\, \nabla_X^g Y 
= X(\sigma)\, Y - g(Y,\Lambda)\, X + g(Y, \T\Lambda)\, \T X.
$$
Taking the inner product with $Y$, we obtain
$$
g\bigl((A-\sigma\,\mathrm{Id}) \nabla_X^g Y,\, Y\bigr) = X(\sigma)\, g(Y,Y).
$$
If $Y$ is non-isotropic, the left-hand side vanishes by the $g$-symmetry of $A$, and thus $X(\sigma)=0$.  
Equivalently,
\begin{equation}\label{eq:grad.condition}
g(\grad \sigma, X)=0 \quad \text{for all $X$ in the $\rho$-eigenspace of $A$}.
\end{equation}
From the block form of $A$ described in Remark \ref{rem:sqrtA}, it is clear that each eigenspace of $A$ is generated by one vector in $T^+$ and one in $T^-$.  
Since a \pk metric is non-degenerate on $T^+\times T^-$, its restriction to each eigenspace of $A$ is also non-degenerate.  
Moreover, as the two eigendistributions of $A$ are $g$-orthogonal and complementary, it follows from \eqref{eq:grad.condition} that
$$
A\grad \sigma = \sigma\, \grad \sigma \quad \text{on } M^{\mathrm r} \cup M^{\mathrm c}.
$$

Exchanging the roles of $\rho$ and $\sigma$ yields $A\grad \rho= \rho\, \grad \rho$.  
By the density of $M^{\mathrm r} \cup M^{\mathrm c}$ in $M$, these identities extend to the whole of $M$, completing the proof.

\end{proof}

Taking into account Lemma \ref{lem:grad.eigenvectors}, we now examine the gradient vector fields $V_i$, $i=1,2$, defined by \eqref{eq:vi}. The scalar invariants  $\mu_1$ and $\mu_2$ of $A$  can be expressed in terms of its eigenvalues, that essentially are the trace and the determinant of the $2\times 2$ matrix $A'$ appearing in Remark \ref{rem:sqrtA}:
\begin{equation}\label{eq:mu1.mu2}
\mu_1 = \rho + \sigma, \qquad\qquad \mu_2 = \rho\, \sigma\,.
\end{equation}
Hence, by Remark \ref{rem:eigen.cases} and Lemma \ref{lem:grad.eigenvectors}, the distribution
\begin{equation}\label{eq:mathcal.D}
\mathcal{D} := \operatorname{span}\{V_1,\, V_2\}=
\operatorname{span}\{\mathrm{grad}(\rho+\sigma),\, \mathrm{grad}(\rho\sigma)\}
\end{equation}
has constant rank on the open dense subset $M^\circ\subseteq M^{\mathrm r} \cup M^{\mathrm c}$, defined in Section \ref{sec:partition}.  Therefore, the distributions $\mathcal{D}$ and $\T\mathcal{D}$ have constant and equal rank on $M^\circ$, equal either to $2$ or $1$, depending on whether one of the eigenvalues of $A$ is constant. Moreover, the distribution $D$ defined by \eqref{eq:D} is given by
\begin{equation}\label{eq:D.2}
D=\mathcal{D}+\T\mathcal{D}\,.
\end{equation}
These distributions will be fundamental in describing and distinguishing local forms of $(g,\T,A)$, see Table \ref{tab:ranks}.
\begin{cor}\label{cor:D-orth}
As a direct consequence of Lemma \ref{lem:grad.eigenvectors}, the distributions 
$\mathcal{D}$ and $\T\mathcal{D}$ are $g$-orthogonal.  
\end{cor}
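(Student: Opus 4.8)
The statement to prove is Corollary~\ref{cor:D-orth}: the distributions $\mathcal{D}$ and $\T\mathcal{D}$ are $g$-orthogonal. The plan is to deduce this directly from Lemma~\ref{lem:grad.eigenvectors} together with the para-Hermitian property of $g$. First I would recall that, by \eqref{eq:mu1.mu2} and \eqref{eq:mathcal.D}, the distribution $\mathcal{D}$ is spanned by $\grad(\rho+\sigma)$ and $\grad(\rho\sigma)$; using $\grad(\rho\sigma)=\sigma\grad\rho+\rho\grad\sigma$ and $\grad(\rho+\sigma)=\grad\rho+\grad\sigma$, and the fact that $\rho\neq\sigma$ on $M^{\circ}$, one sees that $\mathcal{D}$ is equivalently spanned by the eigengradients $\grad\rho$ and $\grad\sigma$. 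Hence it suffices to show that $g(\grad\rho,\T\grad\rho)=g(\grad\rho,\T\grad\sigma)=g(\grad\sigma,\T\grad\sigma)=0$.

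The diagonal terms are immediate: for any vector field $X$, the para-Hermitian identity $g(\T X,\T X)=-g(X,X)$ applied with $\T X$ in place of $X$ gives $g(\T X,X)=-g(X,\T X)$, whence $g(X,\T X)=0$; in particular $g(\grad\rho,\T\grad\rho)=0$ and $g(\grad\sigma,\T\grad\sigma)=0$. For the cross term, I would invoke Lemma~\ref{lem:grad.eigenvectors}: $A\grad\rho=\rho\grad\rho$ and $A\grad\sigma=\sigma\grad\sigma$, so $\grad\rho$ and $\grad\sigma$ are eigenvectors of $A$ for the distinct eigenvalues $\rho\neq\sigma$. Since $[A,\T]=0$, the field $\T\grad\sigma$ is again an eigenvector of $A$ with eigenvalue $\sigma$: indeed $A(\T\grad\sigma)=\T(A\grad\sigma)=\sigma\,\T\grad\sigma$. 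By the $g$-symmetry of $A$, eigenvectors corresponding to distinct eigenvalues are $g$-orthogonal, exactly as already recorded in Lemma~\ref{lem:grad.eigenvectors} (cf.\ \eqref{eq:gothogonal}); applying this to $\grad\rho$ (eigenvalue $\rho$) and $\T\grad\sigma$ (eigenvalue $\sigma$) gives $g(\grad\rho,\T\grad\sigma)=0$ on $M^{\mathrm r}\cup M^{\mathrm c}$, hence on all of $M$ by density and continuity. By symmetry in $\rho$ and $\sigma$ one also gets $g(\grad\sigma,\T\grad\rho)=0$.

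Combining these three vanishings, every generator of $\mathcal{D}$ is $g$-orthogonal to every generator of $\T\mathcal{D}$, so $g(\mathcal{D},\T\mathcal{D})=0$, which is the claim. There is no real obstacle here; the only point requiring a word of care is that the eigengradients are defined and smooth a priori only on $M^{\mathrm r}\cup M^{\mathrm c}$, so the orthogonality relations are first established there and then extended to $M$ by continuity using the density asserted in Lemma~\ref{prop:Msing} — but this is precisely the same density argument already used inside the proof of Lemma~\ref{lem:grad.eigenvectors}, so nothing new is needed.
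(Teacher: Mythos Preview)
Your argument is correct and is precisely the natural unpacking of what the paper leaves implicit (the corollary is stated without proof, as a ``direct consequence'' of Lemma~\ref{lem:grad.eigenvectors}). One minor stylistic remark: the diagonal vanishing $g(X,\T X)=0$ is more cleanly obtained either from the skew-symmetry of $\omega=g(\T\cdot,\cdot)$ or by setting $(U,V)=(X,\T X)$ in the bilinear para-Hermitian identity $g(\T U,\T V)=-g(U,V)$; your phrasing ``applied with $\T X$ in place of $X$'' to the quadratic form $g(\T X,\T X)=-g(X,X)$ does not literally yield $g(\T X,X)=-g(X,\T X)$, though the intended polarization is clear.
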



\begin{prop}\label{prop:para-holomorphic}
Let $(M,g,\T)$ be a \pk surface and let $A \in \mathcal{A}(g,\T)$ be non-parallel.  
Then the vector fields
\begin{equation}\label{eq:Vi}
V_1\,, \quad V_2\,, \quad \T V_1\,, \quad \T V_2
\end{equation}
mutually commute. In particular, both distributions $\mathcal{D}$ and $\T\mathcal{D}$ are integrable.
\end{prop}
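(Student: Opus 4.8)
The plan is to prove the four pairwise commutation relations among $V_1,V_2,\T V_1,\T V_2$ by organizing them into three groups, exploiting the structural facts already established. First, recall from Proposition \ref{prop:hamilt} that each $\T V_i$ is Killing and, being the Hamiltonian vector field of $\mu_i$, is symplectic; moreover each $V_i=\T(\T V_i)$ is para-holomorphic as well. From Corollary \ref{cor:mui.killing} the $\T V_i$ are Killing vector fields, so the Lie brackets $[\T V_i,\T V_j]$ are again Killing. The key observation is that $[\T V_1,\T V_2]$ is, on the one hand, the Hamiltonian vector field of the Poisson bracket $\{\mu_1,\mu_2\}$ with respect to $\omega$, and on the other hand one checks $\{\mu_1,\mu_2\}=0$: indeed $d\mu_1=d(\rho+\sigma)$ and $d\mu_2=d(\rho\sigma)$, and by Corollary \ref{cor:D-orth} together with Lemma \ref{lem:grad.eigenvectors} the gradients $\grad\mu_1,\grad\mu_2$ lie in $\dd$ while $\T\grad\mu_1,\T\grad\mu_2$ lie in $\T\dd$, which is $g$-orthogonal to $\dd$; hence $\omega(\T V_1,\T V_2)=g(\T(\T V_1),\T V_2)=g(V_1,\T V_2)=0$, so $\{\mu_1,\mu_2\}=d\mu_1(\T V_2)=\omega(\T V_1,\T V_2)=0$. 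Therefore $[\T V_1,\T V_2]=0$.

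Next I would handle the mixed brackets $[V_i,\T V_j]$. Since $\T V_j$ is para-holomorphic, its flow preserves $\T$; combined with the fact that it is Killing (hence its flow preserves $g$), its flow preserves the triple $(g,\T)$ and therefore every canonically associated object — in particular $A$ (which satisfies the $\T V_j$-invariant equation \eqref{eq.A}; more directly, $A=\hat\sigma\sigma^{-1}$ and $\sigma$ is built from $g$, while the invariance of $A$ along a pc-projective symmetry which is also an isometry can be argued from uniqueness), hence the eigenvalues $\rho,\sigma$ and the invariants $\mu_i$. Thus $\mathcal L_{\T V_j}\mu_i=0$ for all $i,j$; equivalently $d\mu_i(\T V_j)=0$. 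But $\mathcal L_{\T V_j}(d\mu_i)=d(\mathcal L_{\T V_j}\mu_i)=0$, and since the flow of $\T V_j$ preserves $g$ it sends $\grad\mu_i=V_i$ to $V_i$, i.e. $[\T V_j,V_i]=0$. This disposes of all four mixed brackets.

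Finally, for $[V_1,V_2]$ I would use that $\T V_1,\T V_2$ are para-holomorphic, so their flows preserve $\T$; since $V_i=\T(\T V_i)$, the flow of $\T V_j$ also preserves $V_i$, which re-derives $[\T V_j,V_i]=0$, but more to the point: applying $\T$ to the vanishing bracket $[\T V_1,\T V_2]=0$ and using the Nijenhuis-tensor-vanishing identity $\mathcal L_{V_i}\T=\T\,\mathcal L_{\T V_i}\T=0$ from the proof of Proposition \ref{prop:hamilt}, one gets $[V_1,V_2]=[\T(\T V_1),\T(\T V_2)]=\T[\T V_1,\T(\T V_2)]=\T[\T V_1,V_2]=\T\cdot 0=0$, where the middle equalities use integrability of $\T$ (vanishing Nijenhuis tensor) to pull $\T$ through the bracket. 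Once all six pairwise brackets vanish, integrability of $\dd=\mathrm{span}\{V_1,V_2\}$ and of $\T\dd=\mathrm{span}\{\T V_1,\T V_2\}$ follows immediately from the Frobenius theorem. I expect the main obstacle to be the careful justification that the flows of the $\T V_i$ preserve the eigenvalue functions $\rho,\sigma$ (equivalently, that $A$ is invariant under these isometric para-holomorphic flows); this is where one must invoke either the uniqueness in Proposition \ref{prop:A-equation}/\ref{prop:sigma-metric} or a direct computation using that $\mathcal L_{\T V_i}$ annihilates $g$, $\T$ and $\nabla^g$, hence commutes with the operator defining $\mathcal A(g,\T)$.
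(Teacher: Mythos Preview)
Your first step (showing $[\T V_1,\T V_2]=0$ via the vanishing Poisson bracket $\{\mu_1,\mu_2\}=\omega(\T V_1,\T V_2)=g(V_1,\T V_2)=0$) and your last step (deducing $[V_1,V_2]=0$ from para-holomorphicity plus the mixed brackets) match the paper's proof.

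The gap is in the mixed brackets. You argue that the flow of $\T V_j$ preserves $A$ because it preserves $(g,\T)$, and you flag this yourself as the ``main obstacle''. Your proposed fixes do not work: $A$ is \emph{not} canonically determined by $(g,\T)$, since $\mathcal A(g,\T)$ is a vector space of dimension possibly $>1$ (this is the whole point of the degree of mobility), so no ``uniqueness'' argument applies; and showing that $\mathcal L_{\T V_j}$ commutes with the operator defining $\mathcal A(g,\T)$ only gives $\mathcal L_{\T V_j}A\in\mathcal A(g,\T)$, not that it vanishes.

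The repair is immediate and you already have it in hand. What you actually need is $\mathcal L_{\T V_j}\mu_i=d\mu_i(\T V_j)=g(V_i,\T V_j)=0$, and this is precisely the orthogonality of $\mathcal D$ and $\T\mathcal D$ from Corollary~\ref{cor:D-orth}, the very same fact you invoked in step~1. With that, your argument ``$\T V_j$ Killing and $\mu_i$ invariant $\Rightarrow$ $\grad\mu_i=V_i$ invariant'' goes through. The paper's route is even shorter and avoids this detour entirely: since $\T V_i$ is para-holomorphic,
\[
[\T V_i,V_j]=\mathcal L_{\T V_i}(\T^2 V_j)=(\mathcal L_{\T V_i}\T)(\T V_j)+\T[\T V_i,\T V_j]=0+0,
\]
using only para-holomorphicity and step~1, with no appeal to $A$-invariance or the Killing property.
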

\begin{proof}
Recalling the symplectic form \eqref{eq:omega},  Corollary \ref{cor:D-orth} and \eqref{eq:mu1.mu2}-\eqref{eq:mathcal.D},
the Poisson bracket of $\mu_1$ and $\mu_2$ satisfies
 $$
\{\mu_1,\mu_2\} = \omega(\T V_1,\T V_2) 
= g(\T^2 V_1,\T V_2) = 0.
 $$
Taking into account that $\T V_1$ and $\T V_2$ are Hamiltonian (see Proposition \ref{prop:hamilt}), they commute since $[\T V_1,\T V_2]$ coincides with the Hamiltonian vector field 
associated with $\{\mu_1,\mu_2\}$.
 
\smallskip
By considering again Proposition \ref{prop:hamilt}, each vector field $\T V_i$ is also para-holomorphic.  
Therefore,
 $$
[\T V_i, V_j]
= \mathcal{L}_{\T V_i}(V_j)
= \mathcal{L}_{\T V_i}(\T^2 V_j)
= (\mathcal{L}_{\T V_i}\T)(\T V_j)
+ \T(\mathcal{L}_{\T V_i}\T V_j)
= 0,
 $$
which shows that $\T V_i$ commutes with $V_j$ for all $i,j$.

\smallskip
Also, being each $V_i$   para-holomorphic, it follows that
 $$
[V_i,V_j]= \mathcal{L}_{V_i} V_j
= \mathcal{L}_{V_i}(\T^2 V_j)
= (\mathcal{L}_{V_i}\T)(\T V_j)
+ \T(\mathcal{L}_{V_i}\T V_j)
= 0.
 $$
Therefore, all the vector fields \eqref{eq:Vi}
mutually commute.
\end{proof}
Considering the explicit expressions of the generators of $\mathcal{D}$, one can readily check, by a straightforward computation, 
that the following proposition holds true.
\begin{prop}\label{prop:intersection}
The dimension of
$\mathcal{D} \cap \T\mathcal{D}$ is greater than $0$ if and only if one of the gradients among
$\grad \rho$ and $\grad \sigma$ is isotropic with respect to $g$.  
\end{prop}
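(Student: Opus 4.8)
The plan is to work on the dense open set $M^{\mathrm r}\cup M^{\mathrm c}$ where both eigenvalues $\rho,\sigma$ are smooth, and to compute explicitly the generators of $\mathcal D$ and $\T\mathcal D$ in terms of $\grad\rho$ and $\grad\sigma$. From \eqref{eq:mu1.mu2} we have $\mathcal D=\operatorname{span}\{\grad\rho+\grad\sigma,\ \sigma\,\grad\rho+\rho\,\grad\sigma\}$, and since $\rho\neq\sigma$ on this set, an elementary linear change of basis shows $\mathcal D=\operatorname{span}\{\grad\rho,\ \grad\sigma\}$; hence $\T\mathcal D=\operatorname{span}\{\T\grad\rho,\ \T\grad\sigma\}$. (Where one eigenvalue, say $\sigma$, is constant, $\mathcal D$ is one-dimensional, spanned by $\grad\rho$, and the argument below simplifies accordingly.)

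The next step is to exploit the orthogonality relations coming from Lemma \ref{lem:grad.eigenvectors}: $\grad\rho$ and $\grad\sigma$ lie in $g$-orthogonal eigendistributions of $A$, and the same is true of $\T\grad\rho$ and $\T\grad\sigma$ since $[A,\T]=0$; moreover $g(\T\grad\rho,\grad\rho)=\omega(\grad\rho,\grad\rho)=0$ by antisymmetry of $\omega$, and likewise for $\sigma$. Thus, relative to the $g$-orthogonal splitting $TM = E_\rho\oplus E_\sigma$ into eigendistributions of $A$, we have $\grad\rho,\T\grad\rho\in E_\rho$ and $\grad\sigma,\T\grad\sigma\in E_\sigma$, while each of these $2$-planes carries a nondegenerate restriction of $g$ (as noted in the proof of Lemma \ref{lem:grad.eigenvectors}). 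Consequently a nonzero vector of $\mathcal D\cap\T\mathcal D$ can only be (a multiple of) a vector lying entirely in $E_\rho$ or entirely in $E_\sigma$: the $E_\rho$-component of an element of $\T\mathcal D$ is a combination of $\grad\rho$ and $\T\grad\rho$, and for it to coincide with a multiple of $\grad\rho$ (the only option inside $\mathcal D\cap E_\rho$) forces the coefficient of $\T\grad\rho$ to vanish or $\T\grad\rho\parallel\grad\rho$.

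The heart of the argument is then the following equivalence, to be checked by direct computation: within the $2$-plane $E_\rho$ equipped with the nondegenerate $g$ and the endomorphism $\T$ (which on $E_\rho$ satisfies $\T^2=\Id$ and is $g$-anti-Hermitian), the vectors $\grad\rho$ and $\T\grad\rho$ are linearly dependent if and only if $\grad\rho$ is $g$-isotropic. Indeed, if $\T\grad\rho=c\,\grad\rho$ then $\grad\rho=\T^2\grad\rho=c^2\grad\rho$, so $c=\pm1$ and $\grad\rho\in T^\pm$, whence $g(\grad\rho,\grad\rho)=0$ by isotropy of $T^\pm$; conversely, if $g(\grad\rho,\grad\rho)=0$ but $\grad\rho\neq0$, nondegeneracy of $g$ on $E_\rho$ forces $\grad\rho$ to span one of the two null lines $E_\rho\cap T^\pm$, on which $\T$ acts by $\pm1$, so $\T\grad\rho\parallel\grad\rho$. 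Assembling the pieces: $\dim(\mathcal D\cap\T\mathcal D)>0$ on $M^\circ$ precisely when $\grad\rho$ and $\T\grad\rho$ are dependent or $\grad\sigma$ and $\T\grad\sigma$ are dependent, i.e.\ precisely when one of $\grad\rho$, $\grad\sigma$ is isotropic; and since $D=\mathcal D+\T\mathcal D$ and all these fields are smooth on the dense set $M^{\mathrm r}\cup M^{\mathrm c}$, the conclusion propagates to $M$ by continuity. The main obstacle is purely bookkeeping: one must rule out the mixed possibility of a vector in $\mathcal D\cap\T\mathcal D$ that has nonzero components in \emph{both} $E_\rho$ and $E_\sigma$, which is handled by projecting onto the two $A$-eigendistributions and using that $\mathcal D$ and $\T\mathcal D$ respect this splitting componentwise (each of $\grad\rho,\T\grad\rho$ in $E_\rho$, each of $\grad\sigma,\T\grad\sigma$ in $E_\sigma$), so the intersection splits as $(\mathcal D\cap\T\mathcal D\cap E_\rho)\oplus(\mathcal D\cap\T\mathcal D\cap E_\sigma)$ and each summand is analyzed by the two-dimensional computation above.
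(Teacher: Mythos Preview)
Your argument is correct and fills in precisely the ``straightforward computation'' the paper leaves to the reader. The key steps --- rewriting $\mathcal D$ as $\operatorname{span}\{\grad\rho,\grad\sigma\}$ on $M^\circ$, using the $A$-eigendecomposition $TM=E_\rho\oplus E_\sigma$ (which both $\mathcal D$ and $\T\mathcal D$ respect componentwise), and then reducing to the two-dimensional fact that $\T\grad\rho\parallel\grad\rho$ iff $\grad\rho\in T^\pm$ iff $\grad\rho$ is isotropic --- are all sound and match the ambient setup of the paper (Remark~\ref{rem:sqrtA} and Lemma~\ref{lem:grad.eigenvectors}). One small remark: the final continuity extension ``to $M$'' is unnecessary, since the proposition is asserted on (connected components of) the open set $\widehat M^\circ\subseteq M^\circ$, where all quantities are already smooth and the rank of $D$ is constant; you can simply drop that sentence.
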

According to Proposition \ref{prop:intersection}, we must further restrict the open dense set $M^{\circ}$.  More precisely, we shall consider the following open subset of $M^\circ$:
%
\begin{equation*}
\widehat M^{\circ}=\{\text{The biggest open subset of $M^{\circ}$ such that on each connected component the rank of \eqref{eq:D.2} is constant} \}\,.
\end{equation*}
It turns out to be dense by a standard semicontinuity argument.

\smallskip\noindent
From now on, we shall replace $M^{\circ}$ by $\widehat M^{\circ}$.

\smallskip\noindent
The possible configurations of the gradients of $\rho$ and $\sigma$ and the corresponding rank of \eqref{eq:D.2}, 
up to exchanging $\rho$ and $\sigma$, are listed in Table  \ref{tab:ranks}.  
Each of these cases will be analyzed separately in Section \ref{sec:proof}.

\begin{table}[h!]
\centering
\renewcommand{\arraystretch}{1.2}
\begin{tabular}{|c|c|c|}
\hline
$\mathrm{rank}\,(D=\mathcal{D}+\T\mathcal{D})$ 
& $\grad \rho$ & $\grad \sigma$ \\ 
\hline
$4$ & non-isotropic  with $\rho$ real  & non-isotropic  with $\sigma$ real  \\
 & non-isotropic  with $\rho$ complex & non-isotropic with $\sigma=\bar \rho$ \\
\hline
$3$ 
& $\in \Gamma(T^+)\setminus\{0\}$& non-isotropic  \\
& $\in \Gamma(T^-)\setminus\{0\}$ & non-isotropic  \\
\hline
$2$ 
& non-isotropic & $0$ (i.e.\ $\sigma$ constant)  \\
& $\in \Gamma(T^+)\setminus\{0\}$ & $\in \Gamma(T^+)\setminus\{0\}$  \\
& $\in \Gamma(T^-)\setminus\{0\}$ & $\in \Gamma(T^-)\setminus\{0\}$  \\
& $\in \Gamma(T^+)\setminus\{0\}$ & $\in \Gamma(T^-)\setminus\{0\}$  \\
\hline
$1$ 
& $\in \Gamma(T^+)\setminus\{0\}$ & $0$ (i.e.\ $\sigma$ constant)  \\
& $\in \Gamma(T^-)\setminus\{0\}$ & $0$ (i.e.\ $\sigma$ constant)  \\
\hline
\end{tabular}
\caption{Possible configurations of $\grad\rho$ and $\grad\sigma$, where $\rho$ and $\sigma$ are the eigenvalues of $A$,
and the corresponding rank of $D=\mathcal{D}+\T\mathcal{D}$ on a connected component of the dense subset $\widehat M^{\circ}\subseteq M$.}
\label{tab:ranks}
\end{table}

\begin{rem}
A natural question, which we shall not address here,  concerns the possible coexistence of different local configurations  within the same \pk surface, namely one may ask which of the cases listed in  Table \ref{tab:ranks} can occur simultaneously on distinct regions of $M$.  Remark \ref{rem:eigen.cases} already shows that certain combinations are excluded; for instance, the rank of $D$ cannot drop  from $4$ to $1$ on the same manifold. 
\end{rem}
\begin{rem}\label{rem:kahler.contrast}
It is worth emphasizing that most of the situations described above are peculiar of the \pk setting and have no analogue in the classical Kähler case (cf.  \cite{bmmr}).  
Indeed, for a \K surface $(M,g,J)$ and for each non-parallel $A \in \mathcal{A}(g,J)$, two structural features prevent such phenomena from occurring.

\smallskip\noindent
Firstly, since $J^2=-\mathrm{Id}$, the distribution $\mathcal{D}$ is always such that
$$
\mathrm{rank}(\mathcal{D} \cap J\mathcal{D}) = 0
\quad\text{and}\quad
\mathrm{rank}(\mathcal{D} + J\mathcal{D}) = 2\,\mathrm{rank}(\mathcal{D})$$
everywhere on $M$, which in particular excludes the occurrence of odd-dimensional ranks.

\smallskip\noindent
Secondly, in the \K setting, the metric $g$ restricts to each eigenspace of $A$ as a definite form (either positive or negative), so the gradients of the eigenvalues of $A$ can never be isotropic.  
By contrast, in the \pk case, as a direct consequence of the block form of $A$ described in Remark \ref{rem:sqrtA}, the restriction of $g$ to each eigenspace of $A$ has signature $(1,1)$, which allows the gradients of the eigenvalues to become isotropic, giving rise to the additional degeneracies discussed above and listed in Table \ref{tab:ranks}.
\end{rem}

\section{Proof of Theorem \ref{thm:main}}\label{sec:proof}
In view of Proposition \ref{prop:para-holomorphic}, on each connected component of $\widehat M^{\circ}$ where $D=\mathcal{D}+\T\mathcal{D}$ (cf. \eqref{eq:mathcal.D}--\eqref{eq:D.2}) has maximal rank, that is, in the non-degenerate case of Theorem  \ref{thm:main}, the four para-holomorphic vector fields \eqref{eq:Vi} form a local frame. 
We shall treat this situation in Section \ref{sec.proof.non.deg}.
In all the other (degenerate) cases, treated in Sections \ref{sec.rank.3}--\ref{sec.rank.1},  the construction of a local distinguished coordinate frame on  a (sufficiently small) neighborhood of any point of the connected component of  $\widehat M^{\circ}$ under consideration, will be carried out separately, according to the rank and to the isotropy properties of $\mathcal{D}$ and $\T\mathcal{D}$. 

\smallskip
Indeed, each subsection corresponds to a fixed value of  $\mathrm{rank}(D)$, whereas each subsubsection treats one of the specific configurations listed in Table \ref{tab:ranks}.


\smallskip
Some of the algebraic computations required in the proof are rather involved.   
For this reason, only the essential geometric steps and the resulting expressions are reported here.

From now on, since Theorem \ref{thm:main} involves Benenti tensors, we shall assume, unless otherwise specificed, that $A$ is a Benenti tensor, even if part of the results of the present section holds also for degenerate tensors.

\subsection{Case $\dim D=4$: the non-degenerate situation}\label{sec.proof.non.deg}

In this situation, as we said a bit above, the vector fields \eqref{eq:Vi} form a local frame, and
$$
D=\operatorname{span}\{V_1,V_2,\T V_1, \T V_2\}\,.
$$
In this case, the normal forms contained in Theorem \ref{thm:main} can be obtained by considering the normal forms of the pair of real non-proportional projectively equivalent metrics on surfaces, described   in \cite[Appendix]{m_alone}. 
Below  we briefly explain how it can be done.

\smallskip\noindent
The leaves of the distribution $\mathcal{D}$ (recall that such distribution is integrable, see Proposition \ref{prop:para-holomorphic}) are totally geodesic
submanifolds of $(M,g)$, as we establish in the next lemma. Then, if two \pk metrics are pc-projectively equivalent, their restrictions to each leaf of $\mathcal{D}$ are (real) projectively equivalent. 
Thus, the normal forms of such restrictions can be easily obtained by those contained   in \cite[Appendix]{m_alone}. 

\begin{lemma}\label{lem:D-tot-geod}
Let $(M,g,\T)$ be a \pk surface and let $A\in\mathcal{A}(g,\T)$ be a non-parallel Benenti tensor. On any open subset of $M$ where $\mathrm{rank}(\mathcal{D}\oplus\T\mathcal{D})=4$, 
the leaves of $\mathcal{D}$ are totally geodesic submanifolds of $(M,g)$.
\end{lemma}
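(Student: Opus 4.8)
The goal is to show that $\mathcal{D} = \operatorname{span}\{V_1, V_2\}$ is totally geodesic, i.e.\ $\nabla^g_X Y \in \Gamma(\mathcal{D})$ whenever $X, Y \in \Gamma(\mathcal{D})$, on the region where $\operatorname{rank}(D) = 4$. Since $V_1, V_2$ commute (Proposition \ref{prop:para-holomorphic}) and $\mathcal{D}$ is spanned by them, it suffices to check $\nabla^g_{V_i} V_j \in \Gamma(\mathcal{D})$, and by torsion-freeness plus $[V_i,V_j]=0$ it is enough to control the symmetric part, so in fact I would show $g(\nabla^g_{V_i} V_j, \T V_k) = 0$ for all $i,j,k$; indeed, on the region $\operatorname{rank}(D)=4$ the four fields in \eqref{eq:Vi} form a frame, $\mathcal{D}$ and $\T\mathcal{D}$ are $g$-orthogonal and complementary (Corollary \ref{cor:D-orth}), so $\nabla^g_{V_i}V_j \in \Gamma(\mathcal{D})$ is equivalent to its $g$-pairing with $\mathcal{D}^\perp = \T\mathcal{D}$ vanishing.

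\textbf{Key steps.} First I would pass to the eigenvalue coordinates: since $\mu_1 = \rho+\sigma$, $\mu_2 = \rho\sigma$ and, in the non-degenerate case, $\grad\rho$ and $\grad\sigma$ are non-isotropic eigenvectors of $A$ (Lemma \ref{lem:grad.eigenvectors}), the span $\mathcal{D}$ is also $\operatorname{span}\{\grad\rho, \grad\sigma\}$. So equivalently I must show $\operatorname{span}\{\grad\rho,\grad\sigma\}$ is totally geodesic. The crucial input is equation \eqref{eq.A}: computing $\nabla^g_X(\grad\rho)$ for $X$ an eigenvector of $A$ is exactly what was partially done in the proof of Lemma \ref{lem:grad.eigenvectors} (see \eqref{eq:Asigmanabla}), and differentiating the eigenvector relation $A\grad\rho = \rho\grad\rho$ along a vector $Z$ and projecting onto the $\sigma$- and $\T$-eigenspaces lets one read off the components of $\nabla^g_Z \grad\rho$ transverse to $\mathcal{D}$. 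Concretely: take $Z \in \Gamma(\mathcal{D})$, apply \eqref{eq.A} to $\nabla^g_Z A$, use $A(\grad\rho)=\rho\grad\rho$, $\Lambda = \tfrac14\grad\tr A = \tfrac14(\grad\rho+\grad\sigma) \in \Gamma(\mathcal{D})$, and the $g$-orthogonality relations $g(\grad\rho,\T\grad\rho) = g(\grad\rho,\T\grad\sigma) = 0$ (which follow because $A$ is $g$-symmetric, commutes with $\T$, and para-Hermiticity of $g$, as in \eqref{gtALambda}); the terms of \eqref{eq.A} involving $\T X$ and $\T\Lambda$ then only contribute along $\T\mathcal{D}$ multiplied by coefficients $g(\grad\rho,\cdot)$ that vanish. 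Pairing the resulting expression for $(A-\rho\,\Id)\nabla^g_Z\grad\rho$ with a vector in $\T\mathcal{D}$ and using that $(A-\rho\,\Id)$ is invertible on $\mathcal{D}^\perp$-directions transverse to the $\rho$-eigenspace, one concludes the $\T\mathcal{D}$-component of $\nabla^g_Z\grad\rho$ is zero.

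\textbf{Main obstacle.} The delicate point is bookkeeping the right-hand side of \eqref{eq.A} carefully: $\Lambda$ lies in $\mathcal{D}$ but $\T\Lambda$ lies in $\T\mathcal{D}$, so a priori the derivative $\nabla^g_Z A$ pushes things into the $\T\mathcal{D}$-directions, and one must verify that all such contributions are weighted by inner products of the form $g(\grad\rho, \T(\cdot))$ or $g(\grad\rho,\grad\sigma)$ that vanish by Corollary \ref{cor:D-orth} and the eigenspace orthogonality; the $\T$-linearity $[A,\T]=0$ together with $\nabla^g\T = 0$ is what makes the $\T$-terms organize cleanly. A secondary subtlety is that the argument is naturally carried out on $M^{\mathrm r}\cup M^{\mathrm c}$ where the eigenvalues are smooth, and then extended by density/continuity to the relevant region of $\widehat M^\circ$, exactly as in the proof of Lemma \ref{lem:grad.eigenvectors}. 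Once $g(\nabla^g_{V_i}V_j, \T V_k)=0$ is established for all indices, total geodesy of $\mathcal{D}$ follows immediately.
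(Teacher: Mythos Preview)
Your overall strategy --- reduce to showing $g(\nabla^g_{V_i}V_j,\T V_k)=0$ using that $\{V_1,V_2,\T V_1,\T V_2\}$ is a frame and $\mathcal{D}^\perp=\T\mathcal{D}$ --- is exactly right and matches the paper. However, the mechanism you propose for proving this identity has a real gap.

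Differentiating $A\grad\rho=\rho\grad\rho$ along $Z\in\Gamma(\mathcal{D})$ and using \eqref{eq.A} does give $(A-\rho\,\Id)\nabla^g_Z\grad\rho\in\Gamma(\mathcal{D})$, as you say. Since $A$ preserves both $\mathcal{D}$ and $\T\mathcal{D}$, this forces the $\T\mathcal{D}$-part of $\nabla^g_Z\grad\rho$ to lie in $\ker(A-\rho\,\Id)\cap\T\mathcal{D}=\operatorname{span}\{\T\grad\rho\}$. But you then assert ``one concludes the $\T\mathcal{D}$-component of $\nabla^g_Z\grad\rho$ is zero,'' and this does not follow: you have only killed the $\T\grad\sigma$-component. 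You explicitly acknowledge $(A-\rho\,\Id)$ is singular on the $\rho$-eigendirection in $\T\mathcal{D}$, yet give no argument for why the $\T\grad\rho$-component of, say, $\nabla^g_{\grad\rho}\grad\rho$ must vanish. That component is not controlled by \eqref{eq.A} alone.

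The paper's proof avoids this issue entirely by never invoking \eqref{eq.A}. It uses only three ingredients: the orthogonality $g(V_j,\T V_h)=0$ (Corollary \ref{cor:D-orth}), the Killing equation for $\T V_h$ (Corollary \ref{cor:mui.killing}), and $[V_i,V_j]=0$ (Proposition \ref{prop:para-holomorphic}). Differentiating $g(V_j,\T V_h)=0$ along $V_i$, applying the Killing identity $g(V_j,\nabla^g_{V_i}\T V_h)+g(V_i,\nabla^g_{V_j}\T V_h)=0$, swapping $i\leftrightarrow j$ via commutativity, and adding the two resulting relations yields $g(\nabla^g_{V_i}V_j,\T V_h)=0$ directly in four lines. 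Your eigenvector-differentiation route can be completed, but only by bringing in the Killing property at the end to kill the residual $\T\grad\rho$-component --- at which point the detour through \eqref{eq.A} is redundant, since the Killing argument alone already gives everything.
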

\begin{proof}
By Corollary \ref{cor:D-orth}, we have 
$$
g(V_j,\T V_h)=0\,.  
$$
Covariantly differentiating the above equation along $V_i$ gives
\begin{equation}\label{eq:KD1}
g(\nabla^g_{V_i} V_j, \T V_h) + g(V_j, \nabla^g_{V_i} \T V_h)=0,
\end{equation}
Since each $\T V_h$ is a Killing vector field (see Corollary \ref{cor:mui.killing}), we have
$$
g(V_j, \nabla^g_{V_i} \T V_h) + g(V_i, \nabla^g_{V_j} \T V_h)=0,
$$
that, substituted into \eqref{eq:KD1}, yields
$$g(\nabla^g_{V_i} V_j, \T V_h) - g(V_i, \nabla^g_{V_j} \T V_h)=0\,.
$$
Since the vector fields $V_i$ commute each other (see Proposition \ref{prop:para-holomorphic}),
we may interchange $i$ and $j$, obtaining
\begin{equation}\label{eq:KD2}
g(\nabla^g_{V_j} V_i, \T V_h) - g(V_i, \nabla^g_{V_j} \T V_h)=0\,.
\end{equation}
On the other hand, by differentiating once more the orthogonality relation 
$g(V_i,\T V_h)=0$ along $V_j$, one obtains
\begin{equation}\label{eq:KD3}
g(\nabla^g_{V_j} V_i, \T V_h) + g(V_i, \nabla^g_{V_j} \T V_h)=0\,.
\end{equation}
By summing \eqref{eq:KD2} and \eqref{eq:KD3}, we get
\begin{equation}\label{eq:Dgeodesic}
g(\nabla^g_{V_j} V_i, \T V_h)=0\,. 
\end{equation}
Since the distribution $\T\mathcal{D}$ is the $g$-orthogonal complement of $\mathcal{D}$ (see Corollary \ref{cor:D-orth}), \eqref{eq:Dgeodesic} implies that $\nabla^g_{V_j} V_i$ has no component along $\T\mathcal{D}$, i.e.\ $\nabla^g_{V_j} V_i \in \Gamma(\mathcal{D})$ for all $i,j$.  Thus, the leaves of $\mathcal{D}$ are totally geodesic.  
\end{proof}
Let $L$ be an arbitrary leaf of the distribution $\mathcal{D}$.  By Lemma \ref{lem:D-tot-geod}, it is a totally geodesic surface of $(M,g)$. We denote by $g_L$ the metric on $L$ induced by $g$. The Benenti tensor $A$ preserves the tangent bundle of $L$, namely the distribution $\mathcal{D}$.  Indeed, since $\grad\rho$ and $\grad\sigma$ are eigenvectors of $A$ corresponding to the eigenvalues $\rho$ and $\sigma$, respectively (see Lemma \ref{lem:grad.eigenvectors}), one readily computes
\begin{equation}\label{eq:AV1AV2}
A V_1 = (\rho+\sigma)\,V_1 - V_2, \qquad 
A V_2 = \rho\sigma\,V_1.
\end{equation}
Thus, $A$ induces a well-defined Benenti tensor $A_L$ on $TL$. Moreover, since each leaf $L$ is totally geodesic and $\mathcal{D}$ is $g$-orthogonal to $\T\mathcal{D}$ (see Corollary \ref{cor:D-orth}), it follows from \eqref{eq.A} that the restriction $A_L$ of $A$ to $TL$ satisfies
\begin{equation}\label{eq:realProj}
\nabla^{g_L}_X A_L = \tfrac{1}{2}\big(  X^\flat \otimes \grad_{g_L}\tr A_L 
  + d\tr A_L \otimes X  \big),
\qquad \forall\, X \in \Gamma(TL),
\end{equation}
where the musical isomorphism is taken with respect to $g_L$.  
In particular, each solution $A_L$ of \eqref{eq:realProj} is related to a pseudo-Riemannian metric on $L$ sharing with $g_L$ the same unparameterized geodesics\footnote{We can easily see that, in the case of unparameterized geodesics, namely curves satisfying \eqref{tauplanar} with $\T=0$, the equation \eqref{eq.A} reduces to \eqref{eq:realProj}, taking into account we are dealing with surfaces, i.e. $n=2$.}.

\smallskip
From \eqref{eq:realProj} it is clear that $A_L$ cannot be parallel with respect to $\nabla^{g_L}$, since this would imply $d(\tr A_L)=0$  and consequently $\mathrm{rank}(\mathcal{D}\oplus\T\mathcal{D})<4$.
We can thus invoke the local normal forms for pair of non-proportional projectively equivalent $2$-dimensional pseudo-Riemannian metrics  contained in the Appendix of  \cite{m_alone}, which, in this case, provides the local expressions of $(g_L,A_L)$ on the surface $L$.
To extend this description to the whole four-dimensional manifold $(M,g,\T,A)$, we now introduce local coordinates that are adapted to the  frame $\{V_1,V_2,\T V_1,\T V_2\}$.

\medskip
Since the vector fields $V_1,V_2,\T V_1,\T V_2$ mutually commute (see Proposition \ref{prop:para-holomorphic}) and are linearly independent on an open subset of $M$ where $\mathrm{rank}(\mathcal{D}\oplus\T\mathcal{D})=4$, there exist local coordinates $(y^1,y^2,y^3,y^4)$ such that
$$V_1 = \partial_{y^1}, \qquad 
V_2 = \partial_{y^2}, \qquad 
\T V_1 = \partial_{y^3}, \qquad 
\T V_2 = \partial_{y^4}.$$
From the orthogonal splitting $TM=\mathcal{D}\oplus\T\mathcal{D}$ established in Corollary \ref{cor:D-orth},   from the fact that $g$ is para-Hermitian and $A$ commutes with $\T$, it follows directly that $g$ and $A$ take block-diagonal form
\begin{equation*}
g(y) =
\begin{pmatrix}
g_L(y) & 0\\
0 & -g_L(y)
\end{pmatrix},
\qquad
A(y) =
\begin{pmatrix}
A_L(y)& 0\\
0 & A_L(y)
\end{pmatrix}.
\end{equation*}
The notation $g_L(y)$ and $A_L(y)$ is justified by the fact that they are  the matrices of the components of $g_L$ and $A_L$, respectively, in the coordinates $(y^1,y^2,y^3,y^4)$. Indeed,
the components of $g_L(y)$ depend only on the coordinates $y^1$ and $y^2$, since $\partial_{y^3}$ and $\partial_{y^4}$ are Killing vector fields (see Corollary \ref{cor:mui.killing}) and $L$ is totally geodesic.
%
Moreover, by \eqref{eq:AV1AV2}, we have
\begin{equation}\label{eq:A'}
A_L(y) =
\begin{pmatrix}
\rho+\sigma & \rho\sigma\\
-1 & 0
\end{pmatrix}.
\end{equation}
%
By \cite[Appendix]{m_alone}, the pair $(g_L,A_L)$ is locally diffeomorphic to one of the several normal forms listed there, according to the nature of the eigenvalues of $A_L$.  Since we restrict our analysis to the open dense subset 
$\widehat M^{\circ}\subseteq M$ defined in Section \ref{sec:gradfields}, where the eigenvalues are distinct, only the following two cases can occur:
\begin{itemize}
  \item when the eigenvalues $\rho$ and $\sigma$ are real and distinct, 
  $(g_L,A_L)$ corresponds to a \emph{real Liouville metric};
  \item when the eigenvalues are complex conjugate, 
  $(g_L,A_L)$ corresponds to a \emph{complex Liouville metric}.
\end{itemize}
In the following subsections, we analyze these two cases separately, deriving the corresponding local normal forms of the \pk metrics.

\subsubsection{The case where the eigenvalues $\rho$ and $\sigma$ of $A$ are real: the real Liouville metrics}
In \cite[Appendix]{m_alone}, it is proved that, in the case of real and distinct eigenvalues $\rho$ and $\sigma$ of $A_L$, there exists a change of coordinates $(y^1,y^2)\mapsto(x^1,x^2)$, the latter referred to as \emph{Liouville coordinates}, such that 
\begin{equation}\label{eq:gLiouville}
g_L(x) = (\rho - \sigma)
\begin{pmatrix}
1 & 0 \\
0 & \varepsilon
\end{pmatrix},
\qquad
A_L(x) =
\begin{pmatrix}
\rho & 0 \\
0 & \sigma
\end{pmatrix},
\end{equation}
where $\varepsilon=\pm1$, $\rho=\rho(x^1)$ and $\sigma=\sigma(x^2)$.

\smallskip
In these coordinates, the gradient fields $V_1:=\grad_{g_L}\tr A_L$ and $V_2:=\grad_{g_L}\det A_L$ can be written explicitly as 
\begin{multline*}
\partial_{y^1}=V_1=\grad_{g_L}\tr A_L
=\frac{1}{\rho-\sigma}\big(\rho_{x^1}\,\partial_{x^1}
+\varepsilon\,\sigma_{x^2}\,\partial_{x^2}\big), \\
\partial_{y^2}=V_2=\grad_{g_L}\det A_L
=\frac{1}{\rho-\sigma}\big(\sigma\rho_{x^1}\,\partial_{x^1}
+\varepsilon\,\rho\,\sigma_{x^2}\,\partial_{x^2}\big).
\end{multline*}
Hence, the Jacobian matrix of the coordinate transformation 
$(y^1,y^2)\mapsto(x^1,x^2)$ is
$$
J=\frac{1}{\rho-\sigma}
\begin{pmatrix}
\rho_{x^1} & \sigma\rho_{x^1}\\
\varepsilon\,\sigma_{x^2} & \varepsilon\,\rho\,\sigma_{x^2}
\end{pmatrix}.
$$
Using this expression, one computes $g_L(y)$:
\begin{equation}\label{eq:g'.Liouville}
g_L(y) = J^{t}\, g_L(x)\, J
= \frac{1}{\rho - \sigma}
\begin{pmatrix}
\rho_{x^1}^2 + \varepsilon\,\sigma_{x^2}^2 &
\sigma\rho_{x^1}^2 + \varepsilon\,\rho\,\sigma_{x^2}^2 \\[0.2cm]
\sigma\rho_{x^1}^2 + \varepsilon\,\rho\,\sigma_{x^2}^2 &
\sigma^2\rho_{x^1}^2 + \varepsilon\,\rho^2\,\sigma_{x^2}^2
\end{pmatrix}.
\end{equation}

\smallskip
We now extend the Liouville coordinates $(x^1,x^2)$ to a system of coordinates in the ambient four-dimensional manifold by setting 
$$
x^3 := y^3, \qquad x^4 := y^4.
$$

\smallskip
Combining \eqref{eq:A'}, \eqref{eq:gLiouville}, and \eqref{eq:g'.Liouville}, we obtain, in the  coordinates $(x^1,x^2,x^3,x^4)$, the explicit expressions of the metric $g$ and of the Benenti tensor $A$ of the real Liouville case of Theorem \ref{thm:main}.
Concerning the symplectic form, since the matrix of the components of the para-complex structure $\T$ in the  coordinates $(y^1,y^2,y^3,y^4)$ is
\begin{equation}\label{eq:T.y}
\T(y)=
\begin{pmatrix}
0&0&1&0\\
0&0&0&1\\
1&0&0&0\\
0&1&0&0
\end{pmatrix},
\end{equation}
the matrix $\T(x)$ of its components with respect to the  coordinates $(x^1,x^2,x^3,x^4)$ turns out to be
$$
\T(x)=(J\oplus \mathrm{Id})\;\T(y)\;(J\oplus \mathrm{Id})^{-1}
$$
and the shape of the symplectic form $\omega$ follows straightforwardly from definition \eqref{eq:omega}.

\subsubsection{The case where the eigenvalues $\rho$ and $\sigma$ of $A$ are complex-conjugate: the  complex Liouville metrics }
Set
$$
R=\tfrac12(\rho+\sigma), \qquad I=\tfrac{1}{2i}(\rho-\sigma),
$$
so that $\rho=R+i\,I$ and $\sigma=\bar\rho=R-i\,I$.  
In \cite[Appendix]{m_alone}, it is proved that there exists a change of coordinates $(y^1,y^2)\mapsto(x^1,x^2)$, the latter referred to as \emph{complex Liouville coordinates},  such that
\begin{equation}\label{eq:gAcomplex}
g_L(x)=
\begin{pmatrix}
0 & I\\
I & 0
\end{pmatrix},
\qquad
A_L(x)=
\begin{pmatrix}
R & -\,I\\
I & R
\end{pmatrix},
\end{equation}
where $\rho(z)=R(z)+i\,I(z)$ is a holomorphic function of the complex variable $z=x^1+i\,x^2$.

\smallskip
In these coordinates, the gradient fields $V_1:=\grad_{g_L}\tr A_L$ and $V_2:=\grad_{g_L}\det A_L$  are
\begin{multline*}
\partial_{y^1}=V_1=\grad_{g_L}\tr A_L
=\frac{2}{I}\big(R_{x^2}\,\partial_{x^1}+R_{x^1}\,\partial_{x^2}\big), \\
\partial_{y^2}=V_2=\grad_{g_L}\det A_L
=\frac{2}{I}\Big((R R_{x^2}+I \,I_{x^2})\,\partial_{x^1}+(R R_{x^1}+I\, I_{x^1})\,\partial_{x^2}\Big).
\end{multline*}
Hence, the Jacobian matrix of $(y^1,y^2)  \mapsto (x^1,x^2)$ is
$$
J=\frac{2}{I}
\begin{pmatrix}
R_{x^2} & R R_{x^2}+I \, I_{x^2} \\
R_{x^1} & R R_{x^1}+I \, I_{x^1}
\end{pmatrix}.
$$
A direct computation gives the components of $g_L(y)$ in the $(y^1,y^2)$ coordinates:
\begin{equation}\label{eq:g'.cLiouville}
g_L(y) = J^{t}\, g_L(x)\, J
= \frac{4}{I}
\begin{pmatrix}
2R_{x^1} R_{x^2} & I(R_{x^1} I_{x^2} + R_{x^2} I_{x^1})+2R R_{x^1} R_{x^2} \\
I(R_{x^1} I_{x^2} + R_{x^2} I_{x^1})+2R R_{x^1} R_{x^2} & 2(RR_{x^1} + I \, I_{x^1})(RR_{x^2} + I \, I_{x^2})
\end{pmatrix}.
\end{equation}

\smallskip
As in the previous case, we  extend the complex Liouville coordinates $(x^1,x^2)$ to a system of coordinates in the ambient four-dimensional manifold by setting
$$
x^3:=y^3, \qquad x^4:=y^4.
$$

\smallskip
Using the Cauchy-Riemann equations $R_{x^1}=I_{x^2}$ and $R_{x^2}=-I_{x^1}$, and combining \eqref{eq:A'}, \eqref{eq:gAcomplex}, and \eqref{eq:g'.cLiouville}, one obtains, in the  coordinates $(z=x^1+i x^2,\,x^3,\,x^4)$, the explicit expressions of the metric $g$ and of the Benenti tensor $A$ of the complex Liouville metrics of Theorem \ref{thm:main}. Concerning the symplectic form, recalling that the components of the para-complex structure $\T$ in the coordinates $(y^1,y^2,y^3,y^4)$ are given by \eqref{eq:T.y}, the matrix of its components with respect to  $(x^1,x^2,x^3,x^4)$ is
$$
\T(x)=(J\oplus \mathrm{Id})\;\T(y)\;(J\oplus \mathrm{Id})^{-1}
$$
and, as in real Liouville case, the shape of the symplectic form $\omega$ follows straightforwardly from definition \eqref{eq:omega}.

\subsection{Case $\dim D=3$}\label{sec.rank.3}

\subsubsection{The case where $\grad\rho\in\Gamma(T^+)\setminus\{ 0\}$ and $\grad\sigma$ is non-isotropic}\label{sec.dimD3.primo}

In this situation, 
\begin{equation}\label{eq.v.f.dim.3}
D=\operatorname{span}\{\T V_1\,,\,\,\T V_2\,,\,\,V_1-\T V_1\}\,.
\end{equation}
The choice of the vector fields spanning $D$ is motivated by the fact that the first two are Killing vector fields (see Corollary \ref{cor:mui.killing}) and the third belongs to $\Gamma(T^-)\setminus\{0\}$.
%
%
\begin{lemma}\label{campotrasverso}
There exists a local transverse vector field $Y$ to $D$ and belonging to $\in \Gamma(T^-)\setminus\{ 0\}$ which commutes with the generators of $D$ contained in \eqref{eq.v.f.dim.3}.
\end{lemma}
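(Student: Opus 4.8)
The plan is to build a local coordinate frame adapted to $D$ and then obtain $Y$ by projecting a coordinate field transverse to $D$ onto the eigendistribution $T^-$ of $\T$.

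\emph{Step 1: the shape of $D$ and a key inclusion.} In this configuration $\grad\rho\in\Gamma(T^+)\setminus\{0\}$ is $g$-isotropic while $\grad\sigma$ is non-isotropic, so by Lemma \ref{lem:grad.eigenvectors} together with \eqref{eq:mu1.mu2}--\eqref{eq:D.2} one has $\mathcal D=\operatorname{span}\{\grad\rho,\grad\sigma\}$, and $D=\mathcal D+\T\mathcal D$ is a rank-$3$, $\T$-invariant distribution; hence $D=(D\cap T^+)\oplus(D\cap T^-)$. I would first check that $T^+\subseteq D$. Indeed $\grad\rho\in D\cap T^+$, and since $\grad\sigma$ is non-isotropic its $T^+$-component $P^+\grad\sigma=\tfrac12(\grad\sigma+\T\grad\sigma)$ is nonzero and lies in $D$ (as $D$ is $\T$-invariant); moreover $A\,P^+\grad\sigma=P^+A\grad\sigma=\sigma\,P^+\grad\sigma$ because $A$ commutes with $\T$, so $P^+\grad\sigma$ belongs to the $\sigma$-eigenspace of $A$ and is therefore independent of $\grad\rho$. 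Thus $D\cap T^+$ is $2$-dimensional, i.e. $D\cap T^+=T^+$.

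\emph{Step 2: adapted coordinates.} The three fields $X_1:=\T V_1$, $X_2:=\T V_2$, $X_3:=V_1-\T V_1$ of \eqref{eq.v.f.dim.3} span $D$, mutually commute by Proposition \ref{prop:para-holomorphic}, and each is para-holomorphic, being a linear combination of the para-holomorphic fields $V_1,V_2,\T V_1,\T V_2$ of Proposition \ref{prop:hamilt}. By the Frobenius theorem there are local coordinates $(y^1,y^2,y^3,y^4)$ with $X_a=\partial_{y^a}$ for $a=1,2,3$; in particular $\partial_{y^4}\notin D$.

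\emph{Step 3: the field $Y$ and its properties.} I would then set
$$Y:=\tfrac12\bigl(\partial_{y^4}-\T\partial_{y^4}\bigr),$$
the $T^-$-component of $\partial_{y^4}$. By construction $\T Y=-Y$, so $Y\in\Gamma(T^-)$. Since the complementary component $\tfrac12(\partial_{y^4}+\T\partial_{y^4})$ lies in $T^+\subseteq D$ while $\partial_{y^4}\notin D$, writing $Y=\partial_{y^4}-\tfrac12(\partial_{y^4}+\T\partial_{y^4})$ shows that $Y$ is transverse to $D$; in particular $Y\neq 0$. Finally, using $[\partial_{y^a},\partial_{y^4}]=0$ and $\mathcal{L}_{X_a}\T=0$, for $a=1,2,3$,
$$[X_a,Y]=-\tfrac12[\partial_{y^a},\T\partial_{y^4}]=-\tfrac12\bigl((\mathcal{L}_{\partial_{y^a}}\T)(\partial_{y^4})+\T[\partial_{y^a},\partial_{y^4}]\bigr)=0,$$
so $Y$ commutes with all the generators of $D$ listed in \eqref{eq.v.f.dim.3}.

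The step I expect to be the main obstacle is the inclusion $T^+\subseteq D$ of Step 1: this is the structural feature peculiar to this degenerate configuration, and it is exactly what guarantees that the naive candidate $Y=P^-\partial_{y^4}$ is genuinely transverse to $D$ rather than being absorbed into it. Once this inclusion is in place, both the $T^-$-membership and the commutation with $X_1,X_2,X_3$ follow formally from the para-holomorphicity results of Section \ref{sec:killing}.
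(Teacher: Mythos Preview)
Your proof is correct and the underlying mechanism is the same as the paper's --- namely, the three generators of $D$ are commuting and para-holomorphic, so any initially chosen transverse $T^-$-direction can be propagated along their flows while staying in $T^-$ and commuting with them --- but the implementation differs. The paper starts with a curve $\gamma$ inside a leaf of $T^-$ whose tangent at a point is transverse to $D$, and then pushes $\gamma'(t)$ forward along compositions of the flows of $\T V_1,\T V_2,V_1-\T V_1$ to define $Y$; $T^-$-membership follows because para-holomorphic flows preserve $T^-$, and transversality is obtained by shrinking the neighborhood. You instead straighten the three generators to $\partial_{y^1},\partial_{y^2},\partial_{y^3}$ and take the $T^-$-projection of $\partial_{y^4}$. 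Your Step~1, proving $T^+\subseteq D$ (equivalently $\dim(D\cap T^-)=1$), is exactly the fact the paper states at the outset of its proof without justification; it is what makes your transversality argument work cleanly, without any neighborhood-shrinking. Conversely, the paper's construction avoids invoking the flow-box theorem and is perhaps more geometric, but at the cost of a slightly less explicit transversality check.
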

\begin{proof}
Note that, for each $p\in M$, $\dim (D_p\cap T^-_p)=1$. Thus, there exists  $w\in T^-_p$ such that $w\notin D_p$. Let us then consider a curve $\gamma:(-\varepsilon,\varepsilon)\subseteq\R\to M$ adapted to $w$, i.e., with $\gamma(0)=p$ and $\gamma'(0)=w$, such that, for each $t\in (-\varepsilon,\varepsilon)$, $\gamma'(t)\in T^-_{\gamma(t)}$. Let  us consider the vector field $W(t):=\gamma'(t)\in T^-_{\gamma(t)}$ along $\gamma(t)$.

Let us define the neighborhood $U$ of $p$ as follows:
$$
U = \bigcup_{\,|s_i|<\delta,\;|t|<\varepsilon} (\varphi^1_{s_1} \circ \varphi^2_{s_2} \circ   \varphi^3_{s_3})(\gamma(t)),
$$
where $\varphi^1_{s_1},\varphi^2_{s_2},\varphi^3_{s_3}$ are, respectively, the local flows of the generators of \eqref{eq.v.f.dim.3}, with $\delta$ sufficiently small. 

We can define a vector field $Y$ on $U$ as follows.
If $q\in U$, then $q= \varphi_{s}(\gamma(t))$ for some combination $\varphi_s$ of the local flow $\varphi^i_{s_i}$ and some $t\in (-\varepsilon,\varepsilon)$. Then we define
$$
Y(q) = \varphi_{s*}(W(t)).
$$  
Since $W(t)\in T^-$  and any local flow $\varphi_s$ preserves $T^-$ as the generators of \eqref{eq.v.f.dim.3} are para-holomorphic (see Proposition \ref{prop:hamilt}), then $Y(q)\in T^-_q$. 
By possibly restricting the neighborhood $U$, we obtain that
$Y(q)\not\in D_q$, since $w\not\in D_p$.
It turns out that $Y \in \Gamma(T^-)$ is transverse to $D$.
\end{proof}

\begin{prop}\label{prop:coord.D.3}
There exists a local system of coordinates $(x^1,x^2,x^3,x^4)$ against which the vector fields 
$$
\T V_1\,,\,\,\T V_2\,,\,\,V_1-\T V_1\,,\,\,Y
$$
are the coordinate vector fields.
\end{prop}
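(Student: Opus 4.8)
The plan is to show that the four vector fields $\T V_1$, $\T V_2$, $V_1 - \T V_1$, and $Y$ form a local frame of commuting vector fields, and then to invoke the standard fact that a set of pointwise-independent, pairwise-commuting vector fields can be simultaneously straightened into coordinate vector fields. The independence at the relevant points is already available: by construction $\{\T V_1, \T V_2, V_1 - \T V_1\}$ span the rank-$3$ distribution $D$ on the connected component of $\widehat M^{\circ}$ under consideration (this is the hypothesis of the case, see \eqref{eq.v.f.dim.3}), and by Lemma \ref{campotrasverso} the vector field $Y \in \Gamma(T^-)\setminus\{0\}$ is transverse to $D$, hence the four fields are linearly independent at each point of a suitable neighborhood $U$ of any chosen point.

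The core of the proof is therefore the verification that the four fields pairwise commute. For the three generators of $D$ this is essentially Proposition \ref{prop:para-holomorphic}: that proposition gives $[V_i, V_j] = [\T V_i, \T V_j] = [\T V_i, V_j] = 0$ for $i,j \in \{1,2\}$, and the commutator $[V_1 - \T V_1, \T V_2] = [V_1, \T V_2] - [\T V_1, \T V_2] = 0$ follows by bilinearity; similarly $[\T V_1, V_1 - \T V_1] = [\T V_1, V_1] - [\T V_1, \T V_1] = 0$. For the brackets involving $Y$, one appeals directly to Lemma \ref{campotrasverso}, which asserts precisely that $Y$ commutes with each of the generators of $D$ listed in \eqref{eq.v.f.dim.3}; so $[Y, \T V_1] = [Y, \T V_2] = [Y, V_1 - \T V_1] = 0$ by that lemma. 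Once all six pairwise brackets vanish and the four fields are independent on $U$, the simultaneous rectification theorem for commuting vector fields (a standard consequence of the Frobenius theorem, or of the fact that their flows commute and can be used as coordinate curves) yields coordinates $(x^1,x^2,x^3,x^4)$ on a possibly smaller neighborhood with $\partial_{x^1} = \T V_1$, $\partial_{x^2} = \T V_2$, $\partial_{x^3} = V_1 - \T V_1$, $\partial_{x^4} = Y$, which is exactly the assertion.

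The only genuine subtlety — and the step I expect to require the most care — is ensuring that all four fields are simultaneously well-defined and independent on a common neighborhood, and that this neighborhood can be taken inside the region where Lemma \ref{campotrasverso} produces $Y$. This is handled by first invoking Lemma \ref{campotrasverso} to obtain $Y$ on a neighborhood $U$ of the base point on which $Y$ is transverse to $D$, then shrinking $U$ if necessary so that $\T V_1, \T V_2$ and $V_1 - \T V_1$ remain linearly independent there (possible since $D$ has constant rank $3$ on the connected component of $\widehat M^{\circ}$), and finally applying the rectification theorem on this common $U$. No further computation is needed: the commutativity relations are all inherited from Proposition \ref{prop:para-holomorphic} and Lemma \ref{campotrasverso}, and the passage to coordinates is purely a matter of quoting the standard theorem.
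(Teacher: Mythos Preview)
Your proof is correct and follows essentially the same approach as the paper's own argument: independence comes from the rank-$3$ hypothesis on $D$ together with the transversality of $Y$ from Lemma \ref{campotrasverso}, and commutativity is inherited from Proposition \ref{prop:para-holomorphic} for the first three fields and from Lemma \ref{campotrasverso} for the brackets with $Y$. Your write-up is more explicit about the bilinearity of the bracket and the common domain of definition, but the underlying reasoning is identical.
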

\begin{proof}
The vector fields are independent by construction. Also, the first three  mutually commute in view of Proposition \ref{prop:para-holomorphic} whereas $Y$ commutes with the others in view of Lemma \ref{campotrasverso}.
\end{proof}
In the coordinates of Proposition \ref{prop:coord.D.3}, the matrix $(g_{ij})$ of the metric coefficients of $g$ is
\begin{equation}\label{g.components}
g=\left(
\begin{array}{cccc}
 -\|\grad\sigma\|^2 &-\rho\, \|\grad\sigma\|^2  & \|\grad\sigma\|^2 &  \rho_{x^4}+\sigma_{x^4} \\
-\rho\,\|\grad\sigma\|^2 &  -\rho^2\,\|\grad\sigma\|^2  &\rho\, \|\grad\sigma\|^2  &  \rho\, \sigma_{x^4}+\sigma  \rho_{x^4} \\
 \|\grad\sigma\|^2 &\rho\, \|\grad\sigma\|^2  & 0 & 0 \\
\rho_{x^4}+\sigma_{x^4} & \rho\, \sigma_{x^4}+\sigma \rho_{x^4} & 0 & 0 \\
\end{array}
\right).\end{equation}
Recalling that $\partial_i=\T V_i$, $i=1,2$, are Killing vector fields (see Corollary \ref{cor:mui.killing}), the entries of the above matrix are independent of both $x^1$ and $x^2$. It is easy to realize that actually
$$
\rho=\rho(x^3,x^4)\,,\quad \sigma=\sigma(x^3,x^4)\,.
$$
The matrix representing the para-complex structure $\T$, in the above system of local coordinates, is
$$
\T=\left(\begin{array}{cccc}
 1& 0&0&0\\
0& 1&0&0\\
1&\rho&-1&0\\
0&0&0&-1
\end{array}\right)
$$
and a direct computation shows that $\T\grad\rho=\grad\rho$ implies 
$$
\rho=\rho(x^4)\,.
$$ 
By computing $\|\grad\sigma\|^2$ directly from the expression of $g$ in \eqref{g.components}, and taking into account that the same quantity already appears as an entry of the same matrix, consistency requires that
\begin{equation}\label{cases}
\|\grad\sigma\|^2=-\sigma_{x^3}\quad \text{or}\quad \|\grad\sigma\|^2=\sigma_{x^3}.
\end{equation}
Recalling that $A(\grad \rho)=\rho\grad \rho$ and  $A(\grad \sigma)=\sigma\grad \sigma$, the matrix representing $A$ assumes the following form:
$$
A=\left(
\begin{array}{cccc}
\rho+\sigma& \rho\sigma & 0 & 0 \\
 -1 & 0 & 0 & 0 \\
 0 & 0 & \sigma&f\\
 0 & 0 & 0 & \rho \\
\end{array}
\right)\,, \quad f=f(x^1,x^2,x^3,x^4)\,.
$$
Note that $A$ is $\T$-invariant, so that $A(\partial_4)\in \Gamma(T^-)$, then  $A(\partial_4)$ has components only along  $\partial_3$ and $\partial_4$. Moreover,  by taking into account that $A$ is $g$-symmetric, we have that
$$
f=-(\rho-\sigma)\,\frac{\sigma_{x^4}}{\|\grad\sigma\|^2}.
$$
Let us suppose that the first equation of \eqref{cases} holds true. We note that the $(4,1)$-entry of $\nabla^{g}_{\de_4} \T$ vanishes only when $\sigma_{x_3}=0$. However, the first equation  of \eqref{cases} tells us that $\sigma_{x_3}\neq 0$  since we are assuming that $\grad \sigma$  is not isotropic. Therefore  $\nabla^{g} \T\neq 0$, that is not possible for \pk metrics.

\smallskip\noindent
The only possibility is that the second equation of \eqref{cases} holds true.

\smallskip\noindent
Direct computations show that, in this case, equation \eqref{eq.A} is satisfied if and only if 
\begin{equation}\label{rhopde}
\rho_{x^4}=(\rho-\sigma)F\,,\quad F:=-\frac{\left(\sigma_{x^3}\, \sigma_{x^3x^4}-\sigma_{x^4}\, \sigma_{x^3x^3}\right)}{\sigma_{x^3}^2}.
\end{equation}
(Note that $\sigma_{x^3}\neq 0$ and $F\neq 0$ since, on the contrary,  \eqref{g.components} would be degenerate). Then, differentiating \eqref{rhopde} w.r.t. $x^3$ gives
\begin{equation*}
\frac{F_{x^3}}{F} = \frac{\sigma_{x^3}}{\rho-\sigma}, 
\end{equation*}
from which
$$
F= e^{H}\, (\rho-\sigma)\,,\quad H=H(x^4)\,,
$$
that, substituted in \eqref{rhopde}, gives
$$
\rho_{x^4}=e^{H}\,\big(\rho-\sigma\big)^2.
$$
It follows that $\rho-\sigma$, and then $\sigma$, have to be functions only in the variable $x^4$. This is not possible since, as we already said, $\sigma_{x^3}=0$ would imply that the metric \eqref{g.components}  is degenerate.

\medskip\noindent
To conclude, in this case there are no triples $(g,\T,A)$ satisfying the hypotheses of Theorem \ref{thm:main}.

\subsubsection{The case where $\grad\rho\in\Gamma(T^-)$ and $\grad\sigma$ is non-isotropic}

This case can be treated similarly to that of Section \ref{sec.dimD3.primo}. Indeed, in this situation,
\begin{equation}\label{eq.v.f.dim.3.bis}
D=\operatorname{span}\{\T V_1\,,\,\,\T V_2\,,\,\,V_1+\T V_1\}\,,
\end{equation}
the choice of the generators of $D$ being motivated, as in Section \ref{sec.dimD3.primo}, by the fact that the first two are Killing vector fields and the third belongs to $\Gamma(T^+)\setminus\{0\}$. Since the generators of \eqref{eq.v.f.dim.3.bis} are para-holomorphic, we can choose a local vector field $Y\in\Gamma(T^+)\setminus\{ 0\}$ transverse to $D$ which commutes with them (cf. Lemma \ref{campotrasverso}). Once chosen a system of coordinates where $\T V_1\,,\,\,\T V_2\,,\,\,V_1+\T V_1\,,\,\, Y$ are coordinate vector fields, the reasonings and the computations closely follow those of Section \ref{sec.dimD3.primo}; for this reason, for the sake of brevity, we omit them. We arrive to the same conclusion: also in this case, there are no triple $(g,\T,A)$ satisfying the hypotheses of Theorem \ref{thm:main}.

\subsection{Case $\dim D=2$}\label{sec.rank.2}

In what follows we denote by $E_\rho$ and $E_\sigma$, respectively, the eigenspaces of $A$ relative to the eigenvalues $\rho$ and $\sigma$.

\subsubsection{The case where $\grad\rho$ is not isotropic and $\grad\sigma=0$  }

In this case, $\rho$ is a non-constant eigenvalue of $A$ whereas $\sigma=c\in\R\setminus\{0\}$. 

\smallskip\noindent
Therefore, according to 
\eqref{eq:mu1.mu2}--\eqref{eq:D.2}, 
$$
V_1=\grad\rho\,,\quad V_2=c\grad\rho
$$ 
and 
$$
D=\mathrm{span}\{\T V_1,V_1\}\,.
$$
In order to have a more convenient notation, we put
\begin{equation}\label{eq:def.4.v.f}
X_1:=\T V_1\,,\quad X_2:=V_1\,,\quad Y_1\in\Gamma(T^+\cap E_\sigma)\,,\quad Y_2\in\Gamma(T^-\cap E_\sigma)\,
\end{equation}
so that
\begin{equation}\label{eq:splitting}
TM=\mathrm{span}\{X_1\}\oplus \mathrm{span}\{X_2\}\oplus \mathrm{span}\{Y_1\}\oplus \mathrm{span}\{Y_2\}\,.
\end{equation}
at the points of  $\widehat M^\circ$ where the vector fields are defined.
Note that, since $g$ is not degenerate, $g(X_i,X_i)\neq 0$ and $g(Y_1,Y_2)\neq 0$. Furthermore, $g(X_i,Y_j)=g(X_1,X_2)=g(Y_i,Y_i)=0$. Below, starting from \eqref{eq:splitting}, we shall construct a local system of coordinates adapted to the case we are treating in this section. 
As a first step, we analyse the integrability of the two-dimensional distributions defined by vector fields appearing in \eqref{eq:splitting}; the result is summarised in the following proposition.

\begin{prop}\label{prop:distr.integrabili}
Let $X_1,X_2,Y_1,Y_2$ as in \eqref{eq:def.4.v.f}. The distributions  $\mathrm{span}\{X_1,X_2\}$,  $\mathrm{span}\{X_1,Y_1\}$, $\mathrm{span}\{X_1,Y_2\}$, $\mathrm{span}\{X_2,Y_1\}$ and $\mathrm{span}\{X_2,Y_2\}$ are integrable, whereas $\mathrm{span}\{Y_1,Y_2\}$ is not, as it has a non-vanishing component along $X_1$.
\end{prop}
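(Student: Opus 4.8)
The plan is to verify the claim one distribution at a time, using the commutation relations already established together with the algebraic structure of the splitting \eqref{eq:splitting}. First I would treat $\mathrm{span}\{X_1,X_2\}=\mathcal{D}+\T\mathcal{D}=D$: since $X_1=\T V_1$ and $X_2=V_1$, Proposition \ref{prop:para-holomorphic} gives $[X_1,X_2]=[\T V_1,V_1]=0$, so this distribution is (trivially) involutive. Next, for the mixed distributions I would use the fact that both $X_1=\T V_1$ and $X_2=V_1$ are para-holomorphic (Proposition \ref{prop:hamilt}, extended to $V_1$ at the end of its proof), hence their flows preserve the eigendistributions $T^\pm$; moreover $X_1$ is Killing (Corollary \ref{cor:mui.killing}) and, since $A\in\mathcal A(g,\T)$ with $\sigma$ constant, equation \eqref{eq.A} shows that the eigenspace $E_\sigma=E_c$ is a parallel — in particular $\nabla^g$-invariant, hence flow-invariant under Killing fields preserving $\T$ — subbundle. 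Concretely, for $Y_1\in\Gamma(T^+\cap E_\sigma)$ one has that $\mathcal L_{X_i}Y_1$ again lies in $\Gamma(T^+\cap E_\sigma)=\Gamma(\mathrm{span}\{Y_1\})$, so $[X_i,Y_1]$ is proportional to $Y_1$; therefore $\mathrm{span}\{X_i,Y_1\}$ is involutive, and symmetrically for $Y_2$. This disposes of the four mixed two-planes $\mathrm{span}\{X_i,Y_j\}$.

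It remains to analyse $\mathrm{span}\{Y_1,Y_2\}$, i.e. the eigendistribution $E_\sigma$ of $A$ relative to the constant eigenvalue $c$. Here the point is \emph{not} integrability but its failure, and the precise location of the obstruction. I would expand $[Y_1,Y_2]=\nabla^g_{Y_1}Y_2-\nabla^g_{Y_2}Y_1$ and project onto the frame $\{X_1,X_2,Y_1,Y_2\}$ using the orthogonality relations $g(X_i,Y_j)=g(X_1,X_2)=g(Y_i,Y_i)=0$, $g(X_i,X_i)\neq0$, $g(Y_1,Y_2)\neq0$ recorded just before the proposition. The components along $Y_1,Y_2$ are immaterial (they do not obstruct integrability); the component along $X_2=V_1=\grad\rho$ is controlled by $g(\nabla^g_{Y_1}Y_2-\nabla^g_{Y_2}Y_1,\,\grad\rho)$. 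Since $Y_1,Y_2\in E_\sigma$ and $\grad\rho\in E_\rho$ with $E_\rho\perp E_\sigma$ (Lemma \ref{lem:grad.eigenvectors}), differentiating $g(Y_j,\grad\rho)=0$ along $Y_k$ and using \eqref{eq.A} to rewrite $\nabla^g_{Y_k}\grad\rho$ (which involves only $\Lambda=\tfrac14\grad\tr A=\tfrac14\grad\rho$ and its $\T$-image, all lying in $E_\rho$) shows this $X_2$-component vanishes. For the component along $X_1=\T V_1=\T\grad\rho$, the same computation does \emph{not} close up: one gets a multiple of $g(Y_1,\T Y_2)$, which is the pairing of $T^+$ against $\T T^-=T^-$ and is therefore nonzero by non-degeneracy of $g$ on $T^+\times T^-$. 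Hence $[Y_1,Y_2]$ has a genuinely nonzero $X_1$-component, and $\mathrm{span}\{Y_1,Y_2\}$ is not integrable.

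The main obstacle I anticipate is the careful bookkeeping in the last step: one must express all the relevant covariant derivatives of $Y_1,Y_2$ using \eqref{eq.A} and the defining eigenvector relations, keep track of the $\Lambda$-terms and their $\T$-images, and exploit that $\Lambda\in E_\rho$ to kill the $E_\sigma$- and $X_2$-components while isolating the surviving $X_1$-term as a nonzero multiple of $g(Y_1,\T Y_2)$. The preceding cases are comparatively routine once one has recorded that $V_1$ and $\T V_1$ are para-holomorphic and that $E_\sigma$ is $\nabla^g$-parallel (a direct consequence of \eqref{eq.A} when $\sigma$ is constant, since then $\grad\sigma=0$ and the right-hand side of \eqref{eq.A} cannot have a component taking $E_\sigma$ out of itself). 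Throughout, all identities extend from the dense subset $\widehat M^\circ$ where the frame is defined.
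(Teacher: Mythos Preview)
Your argument for the four mixed distributions $\mathrm{span}\{X_i,Y_j\}$ rests on the claim that $E_\sigma$ is $\nabla^g$-parallel. This is false --- and in fact self-defeating: a parallel distribution is automatically integrable, so if $E_\sigma=\mathrm{span}\{Y_1,Y_2\}$ were parallel you would immediately contradict the non-integrability you argue for in the second paragraph. Concretely, for $Y\in\Gamma(E_\sigma)$ and any $Z$, equation \eqref{eq.A} gives $(A-\sigma\,\mathrm{Id})\,\nabla^g_Z Y=-g(Z,Y)\,\Lambda+g(\T Z,Y)\,\T\Lambda$; with $Z=Y_1$, $Y=Y_2$ the right-hand side is $g(Y_1,Y_2)(\T\Lambda-\Lambda)\neq0$, so $\nabla^g_{Y_1}Y_2\notin\Gamma(E_\sigma)$. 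Note also that even if $E_\sigma$ were parallel, this would only constrain $\nabla^g_{X_i}Y_j$, not $\nabla^g_{Y_j}X_i$, so it would not pin down $[X_i,Y_j]$; and the flow-invariance step you invoke uses the Killing property, which $X_2=V_1$ does not enjoy.

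The paper handles the mixed cases by a direct computation: for the representative $\mathrm{span}\{X_1,Y_2\}$ it shows from \eqref{eq.A} that $g(\nabla^g_Z X_i,Y_2)=0$ for every $Z\in\mathrm{span}\{X_1,X_2,Y_2\}$, then uses that $X_1$ is Killing to get $g(\nabla^g_{Y_2}X_1,X_2)=-g(\nabla^g_{X_2}X_1,Y_2)=0$, whence $g([X_1,Y_2],X_2)=g([X_1,Y_2],Y_2)=0$ and $[X_1,Y_2]\in(\mathrm{span}\{X_2,Y_2\})^\perp=\mathrm{span}\{X_1,Y_2\}$. Your sketch for the non-integrability of $\mathrm{span}\{Y_1,Y_2\}$ is close in spirit to the paper's, but be careful: \eqref{eq.A} does not give $\nabla^g_{Y_k}\grad\rho$ directly; the actual computation evaluates $(A-\rho\,\mathrm{Id})\nabla^g_{Y_k}X_i$ via \eqref{eq.A}, extracts the $E_\sigma$-component, and pairs with the remaining $Y_l$ to isolate the surviving $X_1$-term.
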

\begin{proof}
The distribution $\mathrm{span}\{X_1,X_2\}$ is integrable in view of Proposition \ref{prop:para-holomorphic}. Below, we shall prove only the integrability of the distribution  $\mathrm{span}\{X_1,Y_2\}$ as the integrability of the others can be attained by adopting similar computations and reasonings, and the non-integrability of $\mathrm{span}\{Y_1,Y_2\}$.

To prove the integrability of  $\mathrm{span}\{X_1,Y_2\}$ we have to consider both the following relation, where $Z$ is a generic vector field,
\begin{equation*}
g(\nabla^{g}_{Z}Y_i, Y_i)=Z\, g(Y_i,Y_i)- g(\nabla^{g}_{Z}Y_i, Y_i)=-g(\nabla^{g}_{Z}Y_i, Y_i)\ \Rightarrow\  g(\nabla^{g}_{Z}Y_i, Y_i)=0
\end{equation*}
and the following one, where we assume that $Z$ is a section of $\mathrm{span}\{X_1,X_2,Y_2\}$: 
\begin{multline}\label{eq:app.gianni}
\rho\; g\left(\nabla^{g}_{Z}X_i, Y_2\right)= \rho\Big( Z \big(g(X_i,Y_2) \big) -g\left(X_i, \nabla^{g}_{Z}Y_2\right)\Big)=-g\left(\rho X_i, \nabla^{g}_{Z}Y_2\right)=\\
-g\left(A X_i, \nabla^{g}_{Z}Y_2\right)=-g\left(X_i,A( \nabla^{g}_{Z}Y_2)\right)=
g\Big(X_i, \left(\nabla^{g}_{Z}A\right)(Y_2)-\nabla^{g}_{Z}( A Y_2) \Big)=\\
g\Big(X_i, \left(Z^\flat\otimes \Lambda+\Lambda^\flat \otimes Z -(\T Z)^\flat\otimes \T \Lambda-(\T\Lambda)^\flat \otimes \T Z\right)(Y_2)-\nabla^{g}_{Z}( \sigma Y_2) \Big)\overset{(*)}{=}\\
g\Big(X_i, \left(Z^\flat\otimes \Lambda-(\T Z)^\flat\otimes \T \Lambda\right)(Y_2)-\sigma\nabla^{g}_{Z} Y_2 \Big)= -g(\sigma\nabla^{g}_{Z} Y_2,X_i) =\sigma\; g\left(\nabla^{g}_{Z}X_i, Y_2\right)\,,
 \end{multline}
where the step $(*)$ holds true as $\Lambda=\tfrac12\grad\rho=\tfrac12 X_2$, see \eqref{tildeL}. Since $\sigma\neq \rho$, then
\begin{equation}\label{nablaxy1}
 g\left(\nabla^{g}_{Z}X_i, Y_2\right)=0\,,
\end{equation}
from which we obtain
\begin{equation*}
 g\left(X_i, \nabla^{g}_{Z}Y_2\right)=Z\, g\left(X_i, Y_2\right)-g\left(\nabla^{g}_{Z}X_i, Y_2\right)=0\,. 
\end{equation*}
Moreover, since $X_1$ is a Killing vector field,
\begin{equation*}
g\left(\nabla^{g}_{Y_2}X_1, X_2\right)=-g\left(\nabla^{g}_{X_2}X_1, Y_2\right)\overset{\eqref{nablaxy1}}{=}0.
\end{equation*}
Then, we conclude that
$$
g([X_1,Y_2],X_2)=g([X_1,Y_2],Y_2)=0\,,
$$
i.e., $[X_1,Y_2]$ is a section of $ \operatorname{span}\{X_1,Y_2\}$ as  $\mathrm{span}\{X_1,Y_2\}=(\mathrm{span}\{X_2,Y_2\})^\bot$.

\smallskip\noindent
The distribution $\mathrm{span}\{Y_1,Y_2\}=(\mathrm{span}\{X_1,X_2\})^\bot$ is not integrable. Indeed, by assuming that $Z$ is a section of $\mathrm{span}\{Y_1,Y_2\}$, with similar computations as those contained in \eqref{eq:app.gianni}, we arrive to
\begin{equation*}
\sigma\; g\left(\nabla^{g}_{Z}Y_i, X_j\right)=
g\Big(Y_i, \left(\Lambda^\flat \otimes Z -(\T\Lambda)^\flat \otimes \T Z\right)(X_j)-\rho\nabla^{g}_{Z} X_j \Big)
 \end{equation*}
from which we obtain 
$$ 
g\left(\nabla^{g}_{Y_2}Y_1, X_i\right)=\frac{g(Y_1,Y_2)}{2(\sigma-\rho)}(X_1+X_2)^\flat\,(X_i),\qquad g\left(\nabla^{g}_{Y_1}Y_2, X_i\right)=\frac{g(Y_1,Y_2)}{2(\sigma-\rho)}(X_2-X_1)^\flat\,(X_i)\,, 
$$
that imply
$$g\left([Y_1,Y_2], X_i\right)=g\left(\nabla^{g}_{Y_1}Y_2-\nabla^{g}_{Y_2}Y_1, X_i\right)= -\frac{g(Y_1,Y_2)}{\sigma-\rho}X_1^\flat\,(X_i)\,. 
$$
Thus, $[Y_1,Y_2]$ has a non-vanishing component along $X_1$, implying that $\mathrm{span}\{Y_1,Y_2\}$ is not integrable.
\end{proof}

\begin{rem}\label{rem:Y1Y2.along.X1}
Taking into account the last part of the above proof, we can conclude that $[Y_1,Y_2]$ has no components along $X_2$. In particular, for some functions $\mu,\nu,$ and $\gamma\neq 0$,
$$
[Y_1,Y_2]=\mu Y_1 + \nu Y_2 + \gamma X_1 \,.
$$
\end{rem}
In order to construct a distinguished system of coordinates by considering the vector fields of Proposition \ref{prop:distr.integrabili}, taking also into account Remark \ref{rem:Y1Y2.along.X1}, we need the following two technical lemmas.
\begin{lemma}\label{lemma:gianni4}
Let $X_1,X_2,Y_1,Y_2$ be independent vector fields on  a manifold $M$ such that
$$
\mathrm{span}\{X_1,X_2\}\,,\quad \mathrm{span}\{X_2,Y_1\}\,, \quad \mathrm{span}\{X_1,Y_2\}\,, \quad
\mathrm{span}\{X_2,Y_2\}
$$ 
are integrable distributions, with $Y_1$ and $Y_2$ as in Remark \ref{rem:Y1Y2.along.X1}.
Then, there exists a non-zero function $f$ on $M$ such that the following vector fields
\begin{equation}\label{eq.3.campi.c}
X_2\,,\quad fX_1+Y_1\,,\quad Y_2
\end{equation}
mutually span $2$-dimensional integrable distributions.
\end{lemma}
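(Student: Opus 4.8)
The plan is to pin down $f$ by imposing exactly the two missing integrability conditions, which will turn into a linear first-order system for $f$ along an already integrable distribution. Set $Z:=fX_1+Y_1$. Since $\mathrm{span}\{X_2,Y_2\}$ is, by hypothesis, among the integrable distributions, and since $X_2,Z,Y_2$ are linearly independent for every choice of $f$ (the $Y_1$-direction occurs in $Z$ with coefficient $1$), it suffices to find $f\not\equiv 0$ for which both $\mathrm{span}\{X_2,Z\}$ and $\mathrm{span}\{Z,Y_2\}$ are integrable.

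First I would record the brackets dictated by the hypotheses: there exist functions $P,Q,R,S,T,U,V,W$ with
\[
[X_1,X_2]=PX_1+QX_2,\quad [X_2,Y_1]=RX_2+SY_1,\quad [X_1,Y_2]=TX_1+UY_2,\quad [X_2,Y_2]=VX_2+WY_2,
\]
while $[Y_1,Y_2]=\mu Y_1+\nu Y_2+\gamma X_1$ with $\gamma\neq 0$ by Remark \ref{rem:Y1Y2.along.X1}. Expanding $[X_2,Z]$ and $[Z,Y_2]$ in the frame $\{X_1,X_2,Y_1,Y_2\}$ and requiring each to lie in the corresponding $2$-plane, a short computation shows that $\mathrm{span}\{X_2,Z\}$ is integrable if and only if $X_2f=(P+S)f$, and that $\mathrm{span}\{Z,Y_2\}$ is integrable if and only if $Y_2f=(T-\mu)f+\gamma$. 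Writing $\phi:=P+S$ and $\psi:=T-\mu$, the problem is thus reduced to solving the linear inhomogeneous system
\[
X_2f=\phi\,f,\qquad Y_2f=\psi\,f+\gamma
\]
along the integrable distribution $\mathrm{span}\{X_2,Y_2\}$.

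Next I would apply the Frobenius theorem to this system: prescribing $f$ arbitrarily on a submanifold transverse to $\mathrm{span}\{X_2,Y_2\}$ of complementary dimension and propagating it along the leaves yields a solution on a neighbourhood of any given point, provided the compatibility (curvature) conditions
\[
X_2\psi-Y_2\phi=V\phi+W\psi,\qquad X_2\gamma=(\phi+W)\,\gamma
\]
hold identically. Both are consequences of the Jacobi identity for $\{X_1,X_2,Y_1,Y_2\}$: the second is precisely the $X_1$-component of $[X_2,[Y_1,Y_2]]+[Y_1,[Y_2,X_2]]+[Y_2,[X_2,Y_1]]=0$, and the first follows by combining the $X_1$-component of the Jacobi identity on $(X_1,X_2,Y_2)$ with the $Y_1$-component of the one on $(X_2,Y_1,Y_2)$; notably, neither of these involves the (a priori undetermined) bracket $[X_1,Y_1]$, so the stated hypotheses indeed suffice. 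Finally, $f\equiv 0$ cannot solve the system, since the second equation would give $0=\gamma\neq 0$; choosing a nowhere-vanishing datum on the transversal, the resulting $f$ is nonzero near the chosen point, and for this $f$ the three vector fields $X_2,\ fX_1+Y_1,\ Y_2$ pairwise span $2$-dimensional integrable distributions.

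The step I expect to be the main obstacle is the verification of the first compatibility identity $X_2\psi-Y_2\phi=V\phi+W\psi$: unlike the second one it mixes several of the structure functions, and one must organise the relevant Jacobi identity components carefully so that everything cancels (in particular, so that no contribution of $[X_1,Y_1]$ survives). Once this is in place, the remainder of the argument is routine.
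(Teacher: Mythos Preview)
Your proposal is correct and follows essentially the same route as the paper's proof: reduce the two missing integrability conditions to a linear first-order system for $f$ along $\mathrm{span}\{X_2,Y_2\}$, then verify the Frobenius compatibility via the Jacobi identities on $(X_1,X_2,Y_2)$ and $(X_2,Y_1,Y_2)$. The only cosmetic difference is that the paper first rescales $X_2$ and $Y_2$ so that $[X_2,Y_2]=0$ (your $V=W=0$), which slightly simplifies the compatibility check, whereas you carry the extra terms $V,W$ through; both arguments are equivalent.
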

\begin{proof}
Since $\mathrm{span}\{X_2,Y_2\}$ is integrable, up to renaming $X_2\to a X_2$,  $Y_2\to b Y_2$ for suitable functions $a,b$, we can assume 
\begin{equation}\label{eq:X2Y2ugualeazero}
[X_2,Y_2]=0\,.
\end{equation}
By hypothesis, there exist functions $\alpha,\beta,\xi,\eta,\delta,\varphi,\mu,\nu$ and $\gamma\neq 0$ such that
\begin{equation}\label{eq.rel.comm.c}
[X_1,X_2]=\alpha X_1+\beta X_2\,,\quad [X_2,Y_1]=\xi X_2+\eta Y_1\,,\quad [X_1,Y_2]=\delta X_1 + \varphi Y_2\,, \quad [Y_1,Y_2] =\mu Y_1 + \nu Y_2 + \gamma X_1 \,.
\end{equation}
The brackets between the first and second and between the second and the third vector field of \eqref{eq.3.campi.c}, on account of \eqref{eq.rel.comm.c},  are equal, respectively, to
\begin{equation}\label{eq.comm.2.3.c} 
\big(X_2(f) -f \alpha\big) X_1 + \big(- f\beta  + \xi\big) X_2+\eta Y_1\,,\quad  \big( - Y_2(f) + f\delta  + \gamma \big)X_1 + \mu Y_1  + (f \varphi  + \nu ) Y_2\,.
\end{equation}
A direct computation shows that the first vector field of \eqref{eq.comm.2.3.c}  lies into $\mathrm{span}\{X_2,fX_1+Y_1\}$ and the second one of \eqref{eq.comm.2.3.c}  lies into $\mathrm{span}\{fX_1+Y_1,Y_2\}$  if and only if the following system
\begin{equation}
\left\{
\begin{array}{l}
X_2(f)=(\alpha+\eta)f
\\
Y_2(f)=(\delta-\mu)f+\gamma
\end{array}
\right.
\end{equation}
has solution. To prove its existence, it suffices to check that the compatibility conditions of the above system are satisfied. In particular,
$$
Y_2(X_2(f))=  Y_2(\alpha+\eta)f+ (\alpha+\eta)(\delta-\mu)f+ (\alpha+\eta)\gamma\,, \quad X_2(Y_2(f)) =  X_2(\delta-\mu)f + (\delta-\mu)(\alpha+\eta)f + X_2(\gamma) 
$$
and, taking the difference, in view of \eqref{eq:X2Y2ugualeazero}, we obtain the following compatibility condition:
\begin{equation}\label{eq:cond.comp.cc}
\big(X_2(\delta)- Y_2(\alpha) -X_2(\mu) -Y_2(\eta)\big )f + X_2(\gamma)    - (\alpha+\eta)\gamma =0\,.
\end{equation}
Condition \eqref{eq:cond.comp.cc} is actually satisfied in view of the Jacobi identity. Indeed, from 
$$
[X_1,[X_2,Y_2]] + [X_2,[Y_2,X_1]] + [Y_2,[X_1,X_2]]=0
$$
we obtain, by a straightforward computation, in view of  \eqref{eq:X2Y2ugualeazero} and  \eqref{eq.rel.comm.c}, that
$$
-X_2(\delta)X_1+\delta\beta X_2 -X_2(\varphi)Y_2 + Y_2(\alpha)X_1  -\alpha \varphi Y_2
+Y_2(\beta)X_2=0\,.
$$
In particular, by equating to zero the component of $X_1$, we get
\begin{equation}\label{eq:fin.1.c}
X_2(\delta)- Y_2(\alpha)=0\,.
\end{equation}
Moreover, from the identity
$$
[Y_1,[X_2,Y_2]] + [X_2,[Y_2,Y_1]] + [Y_2,[Y_1,X_2]]=0 \,,
$$
by performing similar computations, we obtain
$$
X_2(\mu)Y_1 + \mu \xi X_2+X_2(\nu)Y_2 + X_2(\gamma)X_1-\gamma \alpha X_1 -\gamma \beta X_2 + Y_2(\xi)X_2+Y_2(\eta)Y_1 -\eta \nu Y_2 -\eta \gamma X_1=0
$$
and equating to zero the components of  $X_1$ and $Y_1$ we obtain
\begin{equation}\label{eq:fin.2.c}
 X_2(\gamma)- (\alpha   +\eta) \gamma =0 \, ,\quad X_2(\mu)  +Y_2(\eta)   =0\,.
\end{equation}
Conditions \eqref{eq:fin.2.c} together with \eqref{eq:fin.1.c} give \eqref{eq:cond.comp.cc}.
\end{proof}

\begin{lemma}\label{lem.int.1}
Let $X,Y,Z$ be independent vector fields such that 
$$
\mathrm{span}\{X,Y\}\,, \quad \mathrm{span}\{X,Z\}\,, \quad \mathrm{span}\{Y,Z\}
$$
are integrable distributions. Then there exist three non-zero functions
$\alpha,\beta,\gamma$ such that
\begin{equation}\label{eq.comm.2.2.bis}
[\alpha X, \beta Y]=0\,,\quad  [\alpha X, \gamma Z]=0\,,\quad [\beta Y, \gamma Z]=0\,.
\end{equation}
\end{lemma}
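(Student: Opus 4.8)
The plan is to reduce \eqref{eq.comm.2.2.bis} to the local solvability of three overdetermined first-order linear systems --- one along each of the three integrable $2$-planes --- and then to observe that the required compatibility conditions are precisely the three scalar components of a single Jacobi identity for $X,Y,Z$.

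First I would invoke Frobenius: integrability of $\operatorname{span}\{X,Y\}$, $\operatorname{span}\{X,Z\}$ and $\operatorname{span}\{Y,Z\}$ means that, on a neighbourhood of the given point, there are smooth functions $a_1,a_2,b_1,b_2,c_1,c_2$ with
$$[X,Y]=a_1X+a_2Y,\qquad [X,Z]=b_1X+b_2Z,\qquad [Y,Z]=c_1Y+c_2Z.$$
Looking for $\alpha=e^{u},\beta=e^{w},\gamma=e^{v}$ (which automatically makes the sought functions nowhere zero), a one-line computation based on $[fU,gV]=fg[U,V]+fU(g)V-gV(f)U$ gives $[\alpha X,\beta Y]=\alpha\beta\bigl((a_1-Y(u))X+(a_2+X(w))Y\bigr)$, and likewise $[\alpha X,\gamma Z]=\alpha\gamma\bigl((b_1-Z(u))X+(b_2+X(v))Z\bigr)$ and $[\beta Y,\gamma Z]=\beta\gamma\bigl((c_1-Z(w))Y+(c_2+Y(v))Z\bigr)$. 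Hence \eqref{eq.comm.2.2.bis} is equivalent to the six scalar equations
$$Y(u)=a_1,\ \ Z(u)=b_1;\qquad X(w)=-a_2,\ \ Z(w)=c_1;\qquad X(v)=-b_2,\ \ Y(v)=-c_2.$$
Thus $u$ is prescribed only along $\operatorname{span}\{Y,Z\}$, $w$ only along $\operatorname{span}\{X,Z\}$ and $v$ only along $\operatorname{span}\{X,Y\}$, so the three problems are completely decoupled.

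Next I would solve each of them on the corresponding foliation. For $u$: on a leaf $L$ of $\operatorname{span}\{Y,Z\}$ the pair $Y(u)=a_1$, $Z(u)=b_1$ says that $d(u|_L)$ equals the leafwise $1$-form $\eta$ with $\eta(Y)=a_1$, $\eta(Z)=b_1$; this admits a primitive exactly when $d\eta=0$ on $L$, i.e. when $Y(b_1)-Z(a_1)-\eta([Y,Z])=0$, that is, $Y(b_1)-Z(a_1)=c_1a_1+c_2b_1$. Working in Frobenius coordinates in which the leaves are coordinate planes, one normalises the leafwise primitive along a transversal and patches it into a smooth function $u$ on a neighbourhood in $M$, then sets $\alpha:=e^{u}$. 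The same scheme on $\operatorname{span}\{X,Z\}$ produces $\beta$ provided $X(c_1)+Z(a_2)=b_2c_1-a_2b_1$, and on $\operatorname{span}\{X,Y\}$ produces $\gamma$ provided $Y(b_2)-X(c_2)=-a_1b_2-a_2c_2$.

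The crux is then to check these three identities. Expanding the Jacobi identity $[X,[Y,Z]]+[Y,[Z,X]]+[Z,[X,Y]]=0$ in the frame $\{X,Y,Z\}$ by means of the structure equations above, its $X$-, $Y$- and $Z$-components vanish separately and give exactly
$$Y(b_1)-Z(a_1)=c_1a_1+c_2b_1,\quad X(c_1)+Z(a_2)=b_2c_1-a_2b_1,\quad Y(b_2)-X(c_2)=-a_1b_2-a_2c_2,$$
i.e. the closedness of $\eta$, of the $1$-form governing $w$, and of the $1$-form governing $v$, respectively. Consequently all three systems are solvable, and $\alpha=e^{u},\beta=e^{w},\gamma=e^{v}$ are nowhere-vanishing functions satisfying \eqref{eq.comm.2.2.bis}. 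The only real subtlety is the bookkeeping --- keeping track of which structure function enters which equation and ensuring the leafwise normalisation depends smoothly on the transverse directions --- while the conceptual content is entirely the observation that the three compatibility conditions are the three components of one Jacobi identity.
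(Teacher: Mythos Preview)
Your proof is correct and follows essentially the same approach as the paper: write the three brackets in terms of six structure functions, reduce \eqref{eq.comm.2.2.bis} to three decoupled first-order systems for $\alpha,\beta,\gamma$, and verify that the compatibility conditions of those systems are exactly the three scalar components of the Jacobi identity $[X,[Y,Z]]+[Y,[Z,X]]+[Z,[X,Y]]=0$. Your exponential ansatz $\alpha=e^{u}$, etc., is a cosmetic refinement that makes the non-vanishing automatic, and your phrasing in terms of leafwise closed $1$-forms with smooth transverse patching makes the solvability step slightly more explicit than in the paper, but the substance is identical.
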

\begin{proof}
By hypothesis, there exist functions $a,b,c,d,e,f$ such that
\begin{equation}\label{eq.comm.2.2}
[X,Y]=aX+bY\,,\quad [X,Z]=cX+dZ\,,\quad [Y,Z]=eY+fZ\,.
\end{equation}
By expanding equations \eqref{eq.comm.2.2.bis}, on account of \eqref{eq.comm.2.2}, we obtain the following three systems:
\begin{equation}\label{sistema.1}
\left\{
\begin{array}{l}
Y(\alpha)-a\alpha=0
\\
Z(\alpha)-c\alpha=0
\end{array}
\right. \,,
\quad 
\left\{
\begin{array}{l}
X(\gamma)+d\gamma=0
\\
Y(\gamma)+f\gamma=0
\end{array}
\right.\,,
\quad
\left\{
\begin{array}{l}
X(\beta)+b\beta=0
\\
Z(\beta)-e\beta=0
\end{array}
\right.\,.
\end{equation}
The compatibility conditions of the systems \eqref{sistema.1} are, respectively, 
\begin{equation*}
-ea-fc - Z(a) +Y(c)=0\,, \quad -ad-bf+X(f)- Y(d) =0\,, \quad -cb+de-X(e)-Z(b)=0\,,
\end{equation*}
that are satisfied in view of the Jacobi identity
$$
[Z,[X,Y]] + [X,[Y,Z]] + [Y,[Z,X]]=0\,.
$$
\end{proof}
Proposition \ref{prop:distr.integrabili} and Remark \ref{rem:Y1Y2.along.X1} place us in the position to apply Lemma \ref{lemma:gianni4}  to the vector fields introduced in \eqref{eq:def.4.v.f}.  Hence, taking into account  Lemma \ref{lem.int.1}, up to rescaling $Y_1$ and $Y_2$ by suitable nowhere-vanishing functions and renaming them, there exist two non-zero functions $f$ and $h$ such that the vector fields
$$
U_2:=hX_2,\quad U_3:=Y_1+f X_1,\quad U_4:= Y_2
$$
mutually commute. Finally, in order to obtain a frame of commuting vector fields, we restrict the commuting fields $U_i$ to a leaf $\Sigma$ of the integrable distribution $\mathrm{span}\{U_2, U_3, U_4\} $ and extend them along the transverse Killing field $X_1$, as detailed below.

\smallskip
Let $\phi_t$ be the local flow of $X_1$. Let us then define the extension $U_i^{\mathrm{ext}}$ of $U_i|_\Sigma$, $i\in\{2,3,4\}$, to a neighborhood of $\Sigma$ small enough so that
$$
U_i^{\mathrm{ext}}(\phi_t(p)) := \phi_{t*}\big(U_i(p)\big)\,, \quad  p \in \Sigma\,, \quad i\in\{2,3,4\}\,,
$$
are well defined. This way, we have that $\phi_{t*} \left(U_i^{\mathrm{ext}} \right)= U_i^{\mathrm{ext}}$. Then
 $$
[X_1, U_i^{\mathrm{ext}}] = 0\,. 
 $$
Furthermore,
$$
[U_i^{\mathrm{ext}}, U_j^{\mathrm{ext}}]
  = \phi_{t*}\Big( [U_i , U_j]\big|_\Sigma\Big) = 0\,, 
 $$
so that the vector fields $U_i^{\mathrm{ext}}$ mutually commute and each of them commutes also with $X_1$. 
Since $X_1$ is para-holomorphic (see Proposition \ref{prop:hamilt}), $\phi_{t*}$ commutes with $\T$. Therefore, 
$$  
\T \, \phi_{t*} \big(Y_1\big)= \phi_{t*}\big( Y_1\big)\,,\quad   \T \, \phi_{t*} \big(Y_2\big)= -\phi_{t*} \big(Y_2\big)\,.
$$
Then, considered that
$$
T^+=\mathrm{span}\{Y_1,X_1+X_2\}\,,\quad T^-=\mathrm{span}\{Y_2,X_1-X_2\}\,,
$$
we have the following decompositions:
$$
\phi_{t*} (Y_1)=\big(a_1 Y_1+a_2(X_1+X_2)\big)\circ\phi_t\,,\quad \phi_{t*} (Y_2)=\big(a_3 Y_2+a_4(X_1-X_2)\big)\circ\phi_t\,.
$$
On the other hand, $X_1$ is also a Killing vector field (see Corollary \ref{cor:mui.killing}), implying that $\phi_t$ is a local isometry. Then, we have that
\begin{equation*}
\begin{array}{l}
a_2\|X_1\|^2_{\phi_t(p)}=g_{\phi_t(p)}(\phi_{t*} \big(Y_1(p)\big),X_1(\phi_t(p)))=g_p( Y_1(p),X_1(p))=0\,,
\\[2mm]
a_4\|X_1\|^2_{\phi_t(p)}=g_{\phi_t(p)}(\phi_{t*}\big( Y_2(p)\big),X_1(\phi_t(p)))=g_p( Y_2(p),X_1(p))=0\,.
\end{array}
\end{equation*}
Since we are assuming that $X_1$ is not isotropic, then $a_2=a_4=0$ and, recalling also that $[X_1,X_2]=0$ (see Proposition \ref{prop:para-holomorphic}), we have that
\begin{equation}\label{eq:appoggio.filippo}
X_1\,,\quad U_2^{\mathrm{ext}}=\phi_{t*}(h X_2)=hX_2\,,\quad U_3^{\mathrm{ext}}=\phi_{t*}(Y_1+f X_1)=a_1Y_1+fX_1,\quad U_4^{\mathrm{ext}}= \phi_{t*}(Y_2)=a_3Y_2
\end{equation}
mutually commute. In the remaining part of the present section, we shall denote by
$
(x^1,x^2,x^3,x^4)
$
a local system of coordinates where the vector fields \eqref{eq:appoggio.filippo} are coordinate ones, i.e., $\partial_1=X_1$, $\partial_i=U_i^{\mathrm{ext}}$, $i\in\{2,3,4\}$.

\medskip\noindent
In this system of coordinates we have that
$$
\T=\begin{pmatrix}
0 &h& -f &0\\
\frac{1}{h} & 0 &\frac{f}{h} &0\\
0 &0 & 1 &0\\
0&0&0&-1
\end{pmatrix}\,,
\quad A=\begin{pmatrix}
\rho & 0 &f(\rho-c)  &0\\
0 & \rho &0 &0\\
0 &0 & c &0\\
0& 0 &0&c
\end{pmatrix}.$$
Furthermore, we have also that
\begin{multline*}
g(\partial_1,\partial_1)=-g(\grad\rho,\grad \rho)=-\frac{g(\grad\rho,\partial_2)}{h}=-\frac{\rho_{x^2}}{h} \,, \quad \rho_{x^1}= g(\grad\rho,\partial_1)=g(\grad\rho, \T\grad\rho)=0\,,
\\
 \rho_{x^3}= g(\grad\rho,\partial_3)=g( \grad\rho,Y_1+f\, \T \grad\rho)=0\,, \quad
\rho_{x^4}= g(\grad\rho,\partial_4)=g(\grad\rho, Y_2)=0\,.
\end{multline*}
Thus,
$\rho=\rho(x^2)$.

\medskip\noindent
By performing similar computations, we obtain also the remaining components of $g$:
$$
g(\partial_1,\partial_3)=f\, g(\T\grad\rho,\T\grad \rho)=- \frac{f\rho_{x^2}}{h}\,, \quad g(\partial_2,\partial_2)=h\, g(\grad\rho,\partial_2)=h \rho_{x^2}\,.
$$
Then, we obtain
$$
g=\begin{pmatrix}
-\frac{\rho_{x^2}}{h} & 0 & - \frac{f\rho_{x^2}}{h}  &0\\
0 & h\rho_{x^2} &0  &0\\
- \frac{f\rho_{x^2}}{h}   &0&-\frac{f^2\rho_{x^2}}{h}  &g_{34}\\
0&0&g_{34}&0
\end{pmatrix}\,.
$$
Since $\partial_1$ is a Killing vector field, $h$, $f$ and $g_{34}$ are independent of $x^1$.

\medskip\noindent
In view of the fact that $\det g=g_{34}^2 \rho_{x^2}^2\neq 0$, the  $(4,1)$-entry of $\nabla^{g}_{\partial_1} \T$ vanishes if and only if
$h_{x^3}=-f_{x^2}$, 
so that there exists a function $H(x^2,x^3,x^4)$ such that
$$h=H_{x^2}\,,\quad  f=-H_{ x^3}.$$

\smallskip\noindent
The $(3,1)$-entry of $\nabla^{g}_{\partial_1} \T$ vanishes if and only if $H_{x^2x^4}=0$, that implies
$$
H(x^2,x^3,x^4)= \alpha(x^2,x^3)+\beta(x^3,x^4).
$$

\smallskip\noindent
Taking into account that $h=\alpha_{ x^2}\neq 0$, the  $(3,1)$-entry of $\nabla^{g}_{\partial_3} \T$ vanishes if and only if
$$(g_{34})_{x^2}=-\rho_{x^2}\, \beta_{x^3x^4}.$$

\smallskip\noindent
By studying the $(3,2)$-entry of the equation \eqref{eq.A} with $X=\partial_3$, we obtain that
$$
g_{34}=(c-\rho) \beta_{x^3x^4}.
$$

\smallskip\noindent
Taking into consideration that $\det g=(\rho-c)^2\rho_{x^2}^2\, \beta_{x^3x^4}^2\neq 0$, by studying the $(4,1)$-entry of the equation \eqref{eq.A} with $X=\partial_1$, we get $\alpha_{x^2x^3}=0$, that implies
$$
\alpha(x^2,x^3)=\xi(x^2)+\eta(x^3).
$$

\noindent
In conclusion, the general solution is
$$\T=\begin{pmatrix}
0 &\xi_{x^2}& \eta_{x^3}+\beta_{x^3} &0\\
\frac{1}{\xi_{x^2}} & 0 &-\frac{\eta_{x^3}+\beta_{x^3}}{\xi_{x^2}} &0\\
0 &0 & 1 &0\\
0&0&0&-1
\end{pmatrix},\qquad\quad A=\begin{pmatrix}
\rho & 0 &(c-\rho) (\eta_{x^3}+\beta_{x^3})&0\\
0 & \rho &0 &0\\
0 &0 & c &0\\
0& 0 &0&c
\end{pmatrix}.$$
$$
g=\begin{pmatrix}
-\frac{\rho_{x^2}}{\xi_{x^2}} & 0 &  \frac{(\eta_{x^3}+\beta_{x^3})\rho_{x^2}}{\xi_{x^2}}  &0\\
0 & \xi_{x^2}\rho_{x^2} &0  &0\\
 \frac{(\eta_{x^3}+\beta_{x^3})\rho_{x^2}}{\xi_{x^2}}   &0&-\frac{(\eta_{x^3}+\beta_{x^3})^2\rho_{x^2}}{\xi_{x^2}} &(c-\rho) \beta_{x^3x^4} \\
0&0&(c-\rho) \beta_{x^3x^4}&0
\end{pmatrix}.
$$
where we recall that $\rho(x^2)$ is an arbitrary non-constant function, $\xi(x^2)$, $\eta(x^3)$ and $\beta(x^3,x^4)$ are arbitrary functions such that $\xi_{x^2}\neq 0$ and $\beta_{x^3x^4}\neq 0$. By putting $\mu=-\xi_{x^2}$ and $\nu=\eta_{x^3}+\beta_{x^3}$, we arrive to the normal forms of the first case of $\dim D=2$ of Theorem \ref{thm:main}.

%
%

\subsubsection{The case where $\grad\rho,\grad\sigma\in\Gamma(T^+)\setminus\{0\}$}\label{sec:dim.2.plus.plus}

In this case, recalling \eqref{eq:mathcal.D}, we have that
\begin{equation*}
\T\mathcal{D}=\mathcal{D}=\operatorname{span}\{V_1=\grad\rho +\grad \sigma,\,V_2=\sigma\grad \rho+\rho \grad\sigma\}\,,
\end{equation*}
which is integrable in view of  Proposition \ref{prop:para-holomorphic}. We shall complete $\{V_1,V_2\}$ to a frame 
by adding two vector fields in $\Gamma(T^-)\setminus\{0\}$ with the following properties: they have to commute, have to commute also with $V_1$ and $V_2$ and have to form with these latter a system of independent vector fields, so that the obtained four vector fields lead to a local system of coordinates. The construction of the aforementioned additional two vector fields goes as follows.

\bigskip\noindent
Let $\Sigma$ be a leaf of the integrable distribution $T^-$. We can take, w.l.o.g., two vector fields $X\in \Gamma(E_\rho\cap T^-)$ and $Y\in\Gamma(E_\sigma\cap T^-)$ on $\Sigma$ such that $[X,Y]=0$. 
Let $X^\mathrm{ext}$ and $Y^\mathrm{ext}$ be, respectively, the extension of $X$ and $Y$
along the flows of $V_1$ and $V_2$. 

\smallskip\noindent
Such extensions are well defined, in a neighborhood of $\Sigma$ small enough, as $V_1$ and $V_2$ commutes.

\smallskip\noindent
Also, since $V_1$ and $V_2$ are para-holomorphic (see Proposition \ref{prop:hamilt}), their local flows preserve $T^-$, implying that both $X^\mathrm{ext}$ and $Y^\mathrm{ext}$ are sections of $T^-$. 

\smallskip\noindent
Furthermore, since in this case $V_1=\T V_1$ and $V_2=\T V_2$ are also Killing vector fields (see Corollary \ref{cor:mui.killing}), being their local flows local isometries, and since on $\Sigma$ we have that $g(X,V_2-\sigma V_1)=g(Y,V_2-\rho V_1)=0$, then 
\begin{equation}\label{XYext}
g(X^\mathrm{ext}, V_2-\sigma V_1)=g(Y^\mathrm{ext}, V_2-\rho V_1)=0. 
\end{equation}
Indeed, if we denote by $\phi_t$ a generic combination of the local flow of $V_1$ and $V_2$, taking into account that $[V_1,V_2]=0$, we have that
$$
\phi_{t*}(V_2-\sigma V_1)=(V_2-\sigma V_1)\circ \phi_t,\quad\phi_{t*}(V_2-\rho V_1)=(V_2-\rho V_1)\circ \phi_t\,.
$$
Equation \eqref{XYext} implies that $X^\mathrm{ext}$ is proportional to $X$ and that $Y^\mathrm{ext}$ is proportional to $Y$.
Indeed, since $X^\mathrm{ext}$  is a section of $T^-$, it can have non-vanishing components only along $X$ and $Y$. However, since $V_2-\sigma V_1=(\rho-\sigma)\grad \sigma \in \Gamma (E_\sigma\cap T^+)$, $g(X^\mathrm{ext}, V_2-\sigma V_1)=0$ implies that the component of $X^\mathrm{ext}$ along $Y$ is zero. The same reasoning applies to $Y^\mathrm{ext}$.

\smallskip\noindent
To conclude, we have constructed four vector fields
\begin{equation}\label{eq:appoggio.filippo.2}
V_1,\, V_2,\, X^\mathrm{ext}, \, Y^\mathrm{ext}
\end{equation}
that mutually commute.

\bigskip\noindent
In the remaining part of the present section, we shall denote by
$
(x^1,x^2,x^3,x^4)
$
a local system of coordinates where the vector fields \eqref{eq:appoggio.filippo.2} are coordinate ones, i.e., 
$$
\partial_1=V_1,\quad \partial_2=V_2,\quad \partial_3=X^\mathrm{ext},\quad\partial_4=Y^\mathrm{ext}\,.
$$
Now, let us compute the components of the metric $g$ in the aforementioned system of coordinates. Since $\partial_1,\partial_2\in\Gamma(T^+)$, we have that
$$
g(\partial_1,\partial_1)=g(\partial_2,\partial_1)=g(\partial_2,\partial_2)=0\,.
$$
Therefore,
$$
\rho_{x^1}+\sigma_{x^1}=0\,,\quad \sigma\rho_{x^1}+\rho\sigma_{x^1}=0\,,
$$
implying that both $\rho$ and $\sigma$ are independent of $x^1$. Similarly, one can see that they are independent also of $x^2$. Furthermore, by considering that  $\partial_3\in \Gamma(E_\rho)$ and that $\partial_4\in \Gamma(E_\sigma)$, we have that
$$
g(\grad \rho, \partial_4)=g(\grad \sigma, \partial_3)=0\,,
$$
implying that $\rho$ is independent of  $x^4$ and that $\sigma$ is independent of  $x^3$. To sum up, the matrix $(g_{ij})$ has the following form:
\begin{equation*}
g=\left(
\begin{array}{cccc}
 0 & 0 & \rho_{x^3}  & \sigma_{x^4} \\
 0 & 0 &\sigma \rho_{x^3}&\rho\sigma_{x^4}\\
\rho_{x^3} &  \sigma \rho_{x^3}& 0 & 0 \\
\sigma_{x^4} &\rho\sigma_{x^4}& 0 & 0 \\
\end{array}
\right).
\end{equation*}
Since we are dealing with a coordinate system adapted to $T^+$ and $T^-$, the matrix of the components of the para-complex structure $\T$ is given by \eqref{eq:matrix.T.adapted} with $n=2$.

\smallskip\noindent
The matrix of the components of the tensor field $A$ turns out to be
$$
A=\left(
\begin{array}{cccc}
\rho+\sigma& \rho\sigma & 0 & 0 \\
 -1 & 0 & 0 & 0 \\
 0 & 0 & \rho&0\\
 0 & 0 & 0 & \sigma \\
\end{array}
\right).$$
A straightforward computation shows that both the equations \eqref{eq.A} and $\nabla^{g} \T=0$ are satisfied. In conclusion, we arrived to the normal forms of the second case of $\dim D=2$ of Theorem \ref{thm:main}.

\subsubsection{The case where $\grad\rho,\grad\sigma\in\Gamma(T^-)\setminus\{0\}$}

This case closely follows that treated in Section \ref{sec:dim.2.plus.plus}. Indeed, one can prove that
$$
-V_1=\grad \rho+\grad\sigma\,,\quad -V_2=\rho\grad\sigma+\sigma\grad\rho\,, \quad X^\mathrm{ext}\,, \quad Y^\mathrm{ext}\,,
$$
where $X^\mathrm{ext}$ and $Y^\mathrm{ext}$ are the extension along $V_1$ and $V_2$ of two vector fields $X\in \Gamma(E_\rho\cap T^+)$, $Y\in\Gamma(E_\sigma\cap T^+)$ defined on a leaf of $T^+$, mutually commute. Also, similarly to the case of Section \ref{sec:dim.2.plus.plus}, one can prove that $X^\mathrm{ext}$ and $Y^\mathrm{ext}$ are, respectively, proportional to $X$ and $Y$.

\medskip\noindent
The general solution is given by  $(g,-\T,A)$, where $(g,\T,A)$ are as in the previous section.

\subsubsection{The case where $\grad\rho\in\Gamma(T^+)\setminus\{0\}$, $\grad\sigma\in\Gamma(T^-)\setminus\{0\}$}

In this case, recalling again \eqref{eq:mathcal.D}, we have that
$$
\T\mathcal{D}=\mathcal{D}=\text{span}\{\T V_1=\grad\rho -\grad \sigma,\,\T V_2=\sigma\grad \rho-\rho \grad\sigma\}
$$
that, in view of  Proposition \ref{prop:para-holomorphic}, is integrable. Similarly to what was done in the previous sections, we shall complete $\{\T V_1,\T V_2\}$ to a frame 
by adding two vector fields: in this case, one vector field will belong to $\Gamma(T^-)\setminus\{0\}$ and the other to $\Gamma(T^+)\setminus\{0\}$. We shall construct these vector fields such that they satisfy the following properties: they have to commute, have to commute also with $\T V_1$ and $\T V_2$ and have to form with these latter a system of independent vector fields.
The construction of the aforementioned additional two vector fields goes as follows.

\smallskip\noindent
Let $X\in\Gamma(T^-\cap E_\rho)$ and  $Y\in\Gamma(T^+\cap E_\sigma)$. Since
$
TM=\mathcal{D}\oplus \mathrm{span}\{X,\,Y\}
$
and $g(X,Y)=g(\grad\rho,\grad\sigma)=0$ (the eigenspaces of $A$ are $g$-orthogonal, see Lemma \ref{lem:grad.eigenvectors}), then $X(\rho)=g(X,\grad \rho)\neq 0$  and $Y(\sigma)=g(Y,\grad\sigma)\neq 0$ (otherwise the metric would be degenerate).
Taking into account that both $X$ and $Y$ are isotropic, i.e., $g(X,X)=g(Y,Y)=0$, then
$$
\left(\mathrm{span}\{X,\,Y\}\right)^\bot=\mathrm{span}\{X,\,Y\}\,.
$$
Now we prove that $\mathrm{span}\{X,\,Y\}$ is integrable. 

\smallskip\noindent
Let $Z$ be an arbitrary section of $\mathrm{span}\{X,\,Y\}$. Then,
$$
g\left(\nabla^{g}_{Z}X, X\right)= Z\,g\left(X, X\right)-g\left(\nabla^{g}_{Z}X,X\right)\quad\Rightarrow\quad g\left(\nabla^{g}_{Z}X, X\right)=0\,.
$$
Similarly, $g\left(\nabla^{g}_{Z}Y, Y\right)=0$. 
Furthermore, we have that
\begin{multline*}
\sigma\; g\left(\nabla^{g}_{Z}Y, X\right)= \sigma\Big( Z \big(g(Y,X) \big) -g\left(Y, \nabla^{g}_{Z}X\right)\Big)=-g\left(\sigma Y, \nabla^{g}_{Z}X\right)=\\
-g\left(A Y, \nabla^{g}_{Z}X\right)=-g\left(Y,A( \nabla^{g}_{Z}X)\right)=
g\Big(Y, \left(\nabla^{g}_{Z}A\right)(X)-\nabla^{g}_{Z}( A X) \Big)=\\
g\Big(Y, \left(Z^\flat\otimes \Lambda+\Lambda^\flat \otimes Z -(\T Z)^\flat\otimes \T \Lambda-(\T\Lambda)^\flat \otimes \T Z\right)(X)-\nabla^{g}_{Z}( \rho X) \Big)\overset{(*)}{=}\\
-\rho\; g\left(Y, \nabla^{g}_{Z}X\right)=
\rho\; g\left(\nabla^{g}_{Z}Y, X\right)
 \end{multline*}
(the step $(*)$ holds true as $g(X,Z)=g(Y,Z)=0$). \\
Thus, since $\sigma\neq \rho$, we have that
$g\left(\nabla^{g}_{Z}Y, X\right)=0$, that implies 
$g\left(\nabla^{g}_{Z}X, Y\right)=0$. Hence, we can conclude that $\mathrm{span}\{X,\,Y\}$ is totally geodesic, then  integrable.

Up to rescaling $X$ and $Y$ by suitable functions, we can suppose $[X,Y]=0$. Now, let $\Sigma$ be a leaf of $\mathrm{span}\{X,\,Y\}$. Let us denote by $X^\mathrm{ext},Y^\mathrm{ext}$, respectively, the extensions of  
$X|_\Sigma$ and $Y|_\Sigma$ along the para-holomorphic Killing vector fields 
 $\T V_1$, $\T V_2$ (cfr. Corollary \ref{cor:mui.killing} and Proposition \ref{prop:hamilt}).  This way, we obtain four linearly independent vector fields
\begin{equation}\label{eq:appoggio.filippo.3} 
\T V_1,\T V_2,X^\mathrm{ext},Y^\mathrm{ext}
\end{equation}
that mutually commute.


Since $\T V_1$ and $\T V_2$ are para-holomorphic vector fields, in view also of the fact that $X\in\Gamma(T^-\cap E_\rho)$ and $Y\in\Gamma(T^+\cap E_\sigma)$, we have that $X^\mathrm{ext}\in\Gamma(T^-)$ and $Y^\mathrm{ext}\in\Gamma(T^+)$. Hence, the following decompositions hold true, for suitable functions $f_1\neq0 ,f_2,h_1\neq 0,h_2$:
\begin{equation}\label{ext.decomp}
X^\mathrm{ext}=f_1 X+ f_2\grad\sigma\,, \quad Y^\mathrm{ext}=h_1 Y+h_2 \grad \rho\,.
\end{equation}
Since $\T V_1$ and $\T V_2$ are also Killing vector fields, any composition of their local flows is a local isometry. In particular, we have that
\begin{equation}\label{gXY}
0=g(X,Y)|_\Sigma=g(X^\mathrm{ext},Y^\mathrm{ext} )=f_1h_2 X(\rho)+h_1 f_2 Y(\sigma).
\end{equation}
In the remaining part of the present section, we shall denote by
$
(x^1,x^2,x^3,x^4)
$
a local system of coordinates where the vector fields \eqref{eq:appoggio.filippo.3} are coordinate ones, i.e., 
$$\partial_1=\grad\rho -\grad \sigma,\quad \partial_2=\sigma\grad \rho-\rho \grad\sigma,\quad \partial_3=X^\mathrm{ext},\qquad\partial_4=Y^\mathrm{ext}\,.$$
Now we compute the components of the metric $g$ in the aforementioned system of coordinates. Note that
$$g(\grad \rho,\partial_1)=g(\grad \rho,\partial_2)=g(\grad \rho,\partial_4)=g(\grad\sigma,\partial_1)=g(\grad\sigma,\partial_2)=g(\grad\sigma,\partial_3)=0\,,$$
implying
$$
\rho=\rho(x^3)\,,\quad \sigma=\sigma(x^4)\,,
$$
from which, in view of \eqref{gXY}, we obtain
\begin{equation}\label{eq:g.apostolov.second}
g=\left(
\begin{array}{cccc}
 0 & 0 & \rho_{x^3} & -\sigma_{x^4}  \\
 0 & 0 &\sigma\,\rho_{x^3}&-\rho\,\sigma_{x^4} \\
 \rho_{x^3}  &  \sigma\,\rho_{x^3} & 0 & 0 \\
-\sigma_{x^4}  &-\rho\,\sigma_{x^4} &0 & 0 \\
\end{array}
\right).
\end{equation}
Taking into account that
$$\grad\rho=\frac{\rho}{\rho-\sigma}\partial_1-\frac{1}{\rho-\sigma}\partial_2,\qquad\grad\sigma=\frac{\sigma}{\rho-\sigma}\partial_1-\frac{1}{\rho-\sigma}\partial_2\,,
$$
one can readily compute the components of the para-complex structure,
\begin{equation}\label{eq:tau.apostolov.second}
\T=\left(\begin{array}{cccc}
 \frac{\rho+\sigma}{\rho-\sigma}& \frac{2\rho\sigma}{\rho-\sigma}&0&0\\
-\frac{2}{\rho-\sigma}& -\frac{\rho+\sigma}{\rho-\sigma}&0&0\\
0&0&-1&0\\
0&0&0&1
\end{array}\right)
\end{equation}
and, in view of  \eqref{ext.decomp}, the components of $A$,
\begin{equation}\label{eq:A.apostolov.second}
A=\left(
\begin{array}{cccc}
\rho+\sigma& \rho\sigma & -\sigma f_2&\rho\, h_2\\
 -1 & 0 & f_2 & -h_2\\
 0 & 0 & \rho&0\\
 0 & 0 & 0 & \sigma \\
\end{array}
\right)\,,
\end{equation}
where $h_2$ can be computed from \eqref{gXY}:
$$
h_2=-f_2\frac{g(\partial_4,\grad\sigma)}{g(\partial_3,\grad\rho)}=-f_2\frac{\sigma_{x^4}}{\rho_{x^3}}\,.
$$
A straightforward computation shows that, by using the results obtained so far, the equation $\nabla^{g} \T=0$ is satisfied, whereas equation \eqref{eq.A} still not.
Indeed, by considering the $(2,3)$-entries of the equation \eqref{eq.A} with $X=\partial_i$, $i\in\{1,2\}$, we obtain $f_2=f_2(x^3,x^4)$, whereas by studying the $(2,3)$-entries of the same equation with $X=\partial_j$, $j\in\{3,4\}$, recalling that $\det g=(\rho-\sigma)^2\rho_{x^3}^2\sigma_{x^4}^2\neq0$, we get
$$
f_2(x^3,x^4)=k\frac{\rho_{x^3}}{\rho-\sigma}\,,\quad k\in\R\,.
$$

\smallskip\noindent
In conclusion, in this case, we have $g$ and $\T$, respectively, given by \eqref{eq:g.apostolov.second} and  \eqref{eq:tau.apostolov.second}, whereas $A$ is \eqref{eq:A.apostolov.second} with $f_2$ and $h_2$ given above. Thus, we arrive to the fourth case of $\dim D=2$ of Theorem \ref{thm:main}.

\subsection{Case $\dim D=1$}\label{sec.rank.1}

\subsubsection{The case where $\grad\rho\in\Gamma(T^+)$ and $\grad\sigma=0$}\label{sec:1.dim.plus.plus}

In this case, $\rho$ is a non-constant eigenvalue of $A$ whereas $\sigma=c\in\R\setminus\{0\}$. 

\smallskip\noindent
We shall consider the following vector fields:
\begin{equation}\label{eq:def.4.v.f.1.dim}
X_1:=\T V_1=V_1\,,\quad X_2\in \Gamma(T^-\cap E_\rho)\,,\quad Y_1\in\Gamma(T^+\cap E_\sigma)\,,\quad Y_2\in\Gamma(T^-\cap E_\sigma)\,.
\end{equation}
We note that $\mathrm{span}\{X_1,Y_1\}$ and $\mathrm{span}\{X_2,Y_2\}$ are integrable as they are, respectively, the eigendistributions $T^+$ and $T^-$ of $\T$. Moreover, similarly to the case treated in Section \ref{sec.rank.2}, we have the following proposition.
\begin{prop}\label{prop:distr.integrabili.1.dim}
Let $X_1,X_2,Y_1,Y_2$ as in \eqref{eq:def.4.v.f.1.dim}. The distributions  $\mathrm{span}\{X_1,X_2\}$, $\mathrm{span}\{X_1,Y_2\}$ and $\mathrm{span}\{X_2,Y_1\}$ are integrable, whereas $\mathrm{span}\{Y_1,Y_2\}$ is not, as it has a non-vanishing component along $X_1$.
\end{prop}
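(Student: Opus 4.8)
The plan is to mimic, with the necessary adaptations, the argument used to prove Proposition \ref{prop:distr.integrabili} in the $\dim D=2$ case. The integrability of $\mathrm{span}\{X_1,Y_1\}$ and $\mathrm{span}\{X_2,Y_2\}$ is free, since these are exactly the eigendistributions $T^+$ and $T^-$ of $\T$, which are integrable by the para-complex condition (vanishing Nijenhuis tensor), so nothing needs to be done there. For the remaining cases, the key inputs are: $X_1=\T V_1=V_1$ is simultaneously a Killing vector field (Corollary \ref{cor:mui.killing}) and a gradient field $\grad\rho$ with $\Lambda=\tfrac12\grad\rho=\tfrac12 X_2$ (here $\Lambda=\tfrac14\grad\tr A$ and $\tr A=2\rho+2c$); the eigenspaces $E_\rho$ and $E_\sigma$ are $g$-orthogonal (Lemma \ref{lem:grad.eigenvectors}); $g$ is para-Hermitian with $g(X_i,Y_j)=g(X_1,X_2)=g(Y_1,Y_2\text{-part})$ controlled; and equation \eqref{eq.A} governs $\nabla^g A$.

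First I would establish, for any vector field $Z$ lying in $\mathrm{span}\{X_1,X_2,Y_2\}$, the identity $g(\nabla^g_Z X_i,Y_2)=0$ for $i=1,2$, by the same computation as in \eqref{eq:app.gianni}: compute $\rho\, g(\nabla^g_Z X_i,Y_2)$, push $\rho$ through as $A$ acting on $X_i$, use $g$-symmetry of $A$ to move $A$ onto $\nabla^g_Z Y_2$, rewrite $A(\nabla^g_Z Y_2)=(\nabla^g_Z A)(Y_2)-\nabla^g_Z(A Y_2)=(\nabla^g_Z A)(Y_2)-\nabla^g_Z(\sigma Y_2)$, and observe that in \eqref{eq.A} the terms $\Lambda^\flat\otimes Z$ and $(\T\Lambda)^\flat\otimes\T Z$ contribute nothing to $g(X_i,\cdot)$ after pairing, because $\Lambda\propto X_2\in E_\rho\cap T^-$ is $g$-orthogonal to $Y_2$ and to $\T Y_2$; what survives gives $\sigma\, g(\nabla^g_Z X_i,Y_2)$, and since $\rho\neq\sigma$ we conclude $g(\nabla^g_Z X_i,Y_2)=0$. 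From this, $g(X_i,\nabla^g_Z Y_2)=0$ follows by differentiating $g(X_i,Y_2)=0$. Then, using that $X_1$ is Killing to symmetrize $g(\nabla^g_{Y_2}X_1,X_2)=-g(\nabla^g_{X_2}X_1,Y_2)=0$, I obtain $g([X_1,Y_2],X_2)=g([X_1,Y_2],Y_2)=0$; since $\mathrm{span}\{X_1,Y_2\}=(\mathrm{span}\{X_2,Y_2\})^\perp$ (note $g(X_1,Y_2)=0$, $g(X_1,X_2)=0$ and $X_1$ non-isotropic, $Y_2$ isotropic, so the orthogonal complement pairing is correct), this shows $[X_1,Y_2]\in\Gamma(\mathrm{span}\{X_1,Y_2\})$, i.e. integrability. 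The distribution $\mathrm{span}\{X_1,X_2\}$ is integrable because $X_1=V_1$ and (a rescaling of) $X_2\propto\grad\rho$ span $\mathcal{D}+\T\mathcal{D}=D$, which is integrable by Proposition \ref{prop:para-holomorphic}; alternatively one runs the analogous bracket computation. For $\mathrm{span}\{X_2,Y_1\}$ I would argue symmetrically, replacing $T^-$ by $T^+$ in the roles and using $g$-orthogonality of the $\rho$- and $\sigma$-eigenspaces together with \eqref{eq.A}.

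For the non-integrability of $\mathrm{span}\{Y_1,Y_2\}$, I follow the last part of the proof of Proposition \ref{prop:distr.integrabili}: taking $Z\in\Gamma(\mathrm{span}\{Y_1,Y_2\})$ and repeating the $A$-manipulation, only the terms $\Lambda^\flat\otimes Z$ and $(\T\Lambda)^\flat\otimes\T Z$ of \eqref{eq.A} survive when pairing against $X_i$ (since $g(X_j,Z)=0$ for $Z$ in this distribution), yielding explicit nonzero expressions for $g(\nabla^g_{Y_2}Y_1,X_i)$ and $g(\nabla^g_{Y_1}Y_2,X_i)$ proportional to $g(Y_1,Y_2)/(\sigma-\rho)$ times components of $(X_1\pm X_2)^\flat$; subtracting, the $X_2$-contributions cancel and one is left with $g([Y_1,Y_2],X_i)=-\tfrac{g(Y_1,Y_2)}{\sigma-\rho}X_1^\flat(X_i)$, which is nonzero for $X_i=X_1$ since $X_1$ is non-isotropic and $g(Y_1,Y_2)\neq 0$ (else $g$ degenerate). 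Hence $[Y_1,Y_2]$ has a non-vanishing component along $X_1$, and $\mathrm{span}\{Y_1,Y_2\}$ is not integrable. The main obstacle is purely bookkeeping: keeping precise track, in each of the four computations, of which of the four terms of \eqref{eq.A} actually contributes after pairing with the chosen eigenvector, using the $g$-orthogonality of eigenspaces and the isotropy data $g(Y_i,Y_i)=0$, $g(X_1,X_2)=0$, $g(X_i,Y_j)=0$; once this is organized, each identity collapses to a one-line consequence of $\rho\neq\sigma$ and the Killing property of $X_1$.
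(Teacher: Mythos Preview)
Your overall strategy---mimic the proof of Proposition~\ref{prop:distr.integrabili}---is exactly what the paper does (it says only ``The proof closely follows that of Proposition~\ref{prop:distr.integrabili}''). However, several of your adaptations are incorrect because the orthogonality/isotropy data in the $\dim D=1$ case differ essentially from the $\dim D=2$ case, and your computations would fail as written.

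Concretely: in \eqref{eq:def.4.v.f.1.dim} one has $X_1=\T V_1=V_1=\grad\rho\in T^+\cap E_\rho$ and $X_2\in T^-\cap E_\rho$ is \emph{not} a gradient. Hence $\Lambda=\tfrac14\grad\tr A=\tfrac12\grad\rho=\tfrac12 X_1$, not $\tfrac12 X_2$; moreover $\T\Lambda=\Lambda$. Since $T^+$ is $g$-isotropic, $g(X_1,X_1)=0$: $X_1$ is \emph{isotropic} here, contrary to your claim, while $g(X_1,X_2)\neq 0$. Consequently your identification $\mathrm{span}\{X_1,Y_2\}=(\mathrm{span}\{X_2,Y_2\})^\perp$ is false (in fact $\mathrm{span}\{X_2,Y_2\}=T^-$ is self-orthogonal), and your argument for the integrability of $\mathrm{span}\{X_1,X_2\}$ via Proposition~\ref{prop:para-holomorphic} is also wrong: $D$ is one-dimensional, and $X_2$ is not proportional to $\grad\rho$. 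Most seriously, in the non-integrability step you conclude with ``$g([Y_1,Y_2],X_i)=-\tfrac{g(Y_1,Y_2)}{\sigma-\rho}X_1^\flat(X_i)$, nonzero for $X_i=X_1$ since $X_1$ is non-isotropic''; but $X_1^\flat(X_1)=g(X_1,X_1)=0$, so your check detects nothing. The correct computation, using $\Lambda=\T\Lambda=\tfrac12 X_1$, gives $(\sigma-\rho)\,g(\nabla^g_{Y_2}Y_1,X_j)=g(X_1,X_j)\,g(Y_1,Y_2)$ and $g(\nabla^g_{Y_1}Y_2,X_j)=0$; hence $g([Y_1,Y_2],X_2)\neq 0$ and $g([Y_1,Y_2],X_1)=0$. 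Since $X_2$ pairs non-trivially only with $X_1$, the first relation gives the non-vanishing $X_1$-component, and the second shows the $X_2$-component vanishes. So the conclusion is right, but you must test against $X_2$ (and $X_1$), not read off $X_1^\flat(X_1)$.
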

\begin{proof}
The proof closely follows that of Proposition \ref{prop:distr.integrabili}, so, for the sake of brevity we omit it.
\end{proof}


The vector field $X_1$, in view of Corollary \ref{cor:mui.killing}, is a Killing vector field and it will be the first coordinate vector field in a suitable local system of coordinates that we are going to construct below. Firstly, in view of Proposition \ref{prop:distr.integrabili.1.dim}, taking into account Lemmas  \ref{lemma:gianni4} and \ref{lem.int.1}, up to rescaling $X_2$, $Y_1$ and $Y_2$ by appropriate functions and renaming them, there exists a function $f$ such that the following vector fields
$$U_2=X_2,\quad U_3=Y_1+f X_1,\quad U_4= Y_2$$
mutually commute.

Let $\Sigma$ be a leaf of the distribution $\mathrm{span}\{U_2, U_3, U_4\}$. Let $\phi_t$ be the local flow of $X_1$. Let us then define the extension $U_i^{\mathrm{ext}}$ of  $U_i|_\Sigma $ to a neighborhood of $\Sigma$, small enough, as follows:
$$
U_i^{\mathrm{ext}}(\phi_t(p)) := \phi_{t*}\big(U_i(p)\big)\,,  \quad  p \in \Sigma\,.
$$
This way, we have that $U_i^{\mathrm{ext}}$ is constant along the  integral curves of $X_1$, i.e.,  $\phi_{t*} (U_i^{\mathrm{ext}}) = U_i^{\mathrm{ext}}$. Hence, 
 $[X_1, U_i^{\mathrm{ext}}] = 0$ and, moreover,  $[U_i^{\mathrm{ext}}, U_j^{\mathrm{ext}}] = \phi_{t*}\Big( [U_i , U_j]\big|_\Sigma\Big) = 0
$.
Thus, the three vector fields $U_i^{\mathrm{ext}}$, $i\in\{2,3,4\}$, mutually commute and each of them commutes also with $X_1$.

\smallskip\noindent
Since $X_1$ is para-holomorphic (see Proposition \ref{prop:hamilt}), $\T$ commutes with $\phi_{t*}$. Then, 
$$ 
\T \, \phi_{t*} \big(X_2(p)\big)=\phi_{t*}\big( X_2(p)\big)\,,\quad  
\T \, \phi_{t*} \big(Y_1(p)\big)= -\phi_{t*}\big( Y_1(p)\big)\,, \quad   
\T \, \phi_{t*} \big(Y_2(p)\big)= \phi_{t*} \big(Y_2(p)\big),  \quad  p \in \Sigma\,.
$$ 
Therefore, recalling that $\{X_1,Y_1\}$ is a basis of $T^+$ whereas $\{X_2,Y_2\}$ is a basis of $T^-$, we have the following decompositions:
$$  
\phi_{t*} \big(X_2(p)\big)=h_1 Y_2+h_2 X_2\,,\quad \phi_{t*} \big(Y_1(p)\big)=h_3 Y_1+h_4 X_1\,,\quad  \phi_{t*} \big(Y_2(p)\big)=h_5 Y_2+h_6 X_2\,,\quad p \in \Sigma\,.
$$
On the other hand,  $\phi_t$ is a local isometry being $X_1$ a Killing vector field. In particular, we have the following equalities:
$$
\begin{array}{l}
h_6\,g_{\phi_t(p)}(X_1,X_2)=
g_{\phi_t(p)}\left(\phi_{t*} \big(X_1(p)\big),\phi_{t*} \big(Y_2(p)\big)\right)=g_p( X_1(p),Y_2(p))=0\,.
\end{array}
$$
Since $g(X_1,X_2)\neq 0$,
we have that $h_6=0$,
that implies
$$  
\phi_{t*} \big(X_2(p)\big)=h_1 Y_2+h_2 X_2\,, \quad \phi_{t*} \big(Y_1(p)\big)=h_3 Y_1+h_4 X_1\,,\quad  \phi_{t*} \big(Y_2(p)\big)=h_5 Y_2\,.
$$
Finally, the following vector fields
\begin{equation}\label{eq:appoggio.filippo.4}
X_1,\quad U_2^{\mathrm{ext}}=h_2 X_2+h_1 Y_2,\quad U_3^{\mathrm{ext}}=h_3Y_1+(h_4+f) X_1,\quad U_4^{\mathrm{ext}}= h_5Y_2\,,
\end{equation}
mutually commute.
In the remaining part of the present section, we shall denote by
$
(x^1,x^2,x^3,x^4)
$
a local system of coordinates where the vector fields \eqref{eq:appoggio.filippo.4} are coordinate ones, i.e., $\partial_1=X_1$ and $\partial_i=U_i^{\mathrm{ext}}$, $i\in\{2,3,4\}$.
In this system of coordinates, the matrix of the components of the para-complex  structure $\T$ is
\begin{equation*}
\T=\begin{pmatrix}
1 &0 & 0 &0\\
0 & -1 &0 &0\\
0 &0 & 1 &0\\
0&0&0&-1
\end{pmatrix},
\end{equation*}
whereas the matrix of the components of the tensor field $A$ is
\begin{equation*}
A=\begin{pmatrix}
\rho & 0 &f_2(\rho-c) &0\\
0 & \rho &0  &\\
0 &0 & c &0\\
0& f_1(c-\rho) &0&c
\end{pmatrix}.
\end{equation*}
where $f_1=h_1$ and $f_2=h_4+f$.

\smallskip\noindent
Let us compute the metric coefficients $g(\partial_i,\partial_j)$. We have that
\begin{multline*}
\rho_{x_1}=g(\grad\rho,\partial_1)=g(K \grad\rho,K\grad \rho)=0\,, \quad
 \rho_{x_3}= g(\grad\rho,\partial_3)=g(X_1,Y_1+f_2 X_1)=0\,,
\\
 \rho_{x_4}=g(\grad\rho,\partial_4)=g(X_1,Y_2)=0\,,
\end{multline*}
so that $\rho=\rho(x^2)$. Similarly, we obtain also the other components of $g$:
$$
g(\partial_3,\partial_4)=g(Y_1,Y_2)\,,\quad 
g(\partial_2,\partial_3)=g(X_2+f_1Y_2,Y_1+f_2X_1)=f_2\, g(X_2,X_1)+f_1\, g(Y_1,Y_2)=f_2 \rho_{x_2}+f_1 g_{34}\,,
$$
so that we have that
\begin{equation*}
g=\begin{pmatrix}
0& \rho_{x_2} & 0&0\\
\rho_{x_2} & 0&f_2 \rho_{x_2}+f_1 g_{34}  &0\\
0  &f_2 \rho_{x_2}+f_1 g_{34}& 0 &g_{34}\\
0& 0&g_{34}&0
\end{pmatrix}.
\end{equation*}
The components of $g$ are independent of $x^1$ as $\partial_1$ is a Killing vector field. Taking this into account,  $\nabla^{g}\T=0$ if and only if 
\begin{equation}\label{cond1}
\big( f_1g_{34}\big)_{x^4}-\big( g_{34}\big)_{x^2}+\rho_{x^2} f_2=0.
\end{equation}
By considering that $\det g=(g_{34})^2\rho_{x^2}^2\neq 0$ and that $g_{23}$ is independent of $x^1$,
the 
$(1,3)$-entry of equation \eqref{eq.A} with $X=\partial_1$ implies that $f_2$ is independent of $x^1$. Consequently, recalling again that $g_{23}$ is independent of $x^1$, also $f_1$ is independent of $x^1$.

Taking into account\eqref{cond1}, the $(4,2)$-entry of  equation \eqref{eq.A} with $X=\partial_4$ implies that 
$$
g_{34}= (\rho-c)\,\big( f_2\big)_{x^4}.
$$
The $(4,2)$-entry of \eqref{eq.A} with $X=\partial_2$ implies that
\begin{equation}\label{eq:f1.1.dim}
f_1=\frac{(f_2)_{x^2}}{(f_2)_{x^4}}\,,
\end{equation}
whereas the $(4,2)$-entry of \eqref{eq.A} with $X=\partial_3$ implies that \eqref{eq:f1.1.dim} is independent of $x^3$. Then
$$
f_2(x^2,x^3,x^4)=F(x^3,\varphi(x^2,x^4)).
$$
Finally, to sum up, we have that
\begin{equation}\label{eq:g.T.A.dim.1.plus}
\T=\begin{pmatrix}
1 &0 & 0 &0\\
0 & -1 &0 &0\\
0 &0 & 1 &0\\
0&0&0&-1
\end{pmatrix},\quad
A=\begin{pmatrix}
\rho & 0 &(\rho-c)F &0\\
0 & \rho &0  & 0\\
0 &0 & c &0\\
0& (c-\rho)  \frac{F_{x^2}}{F_{x^4}} &0&c
\end{pmatrix},
\end{equation}
$$
g=\begin{pmatrix}
0& \rho_{x^2} & 0&0\\
\rho_{x^2} & 0& \big((\rho-c)F\big)_{x^2} &0\\
0  &  \big((\rho-c)F\big)_{x^2}   & 0 &(\rho-c)F_{x^4}\\
0& 0&(\rho-c)F_{x^4}&0
\end{pmatrix}.
$$
where $\rho=\rho(x^2)$, $F=F(x^3,\varphi(x^2,x^4))$ such that $\rho_{x^2}\neq 0$, $F_{x^4}\neq 0$.  By interchanging $x^2\leftrightarrow x^3$, we obtain the normal forms of the first case of $\dim D=1$ of Theorem \ref{thm:main}.

\subsubsection{The case where $\grad\rho\in\Gamma(T^-)$ and $\grad\sigma=0$}

This case closely follows that treated in Section \ref{sec:1.dim.plus.plus}. The general solution is given by  $(g,-\T,A)$, where $(g,\T,A)$ given by \eqref{eq:g.T.A.dim.1.plus}. By interchanging $x^2\leftrightarrow x^3$, we arrive to the normal forms of the second case of $\dim D=1$ of Theorem \ref{thm:main}.

\section{Four-dimensional para-K\"ahler-Einstein metrics with non-parallel Benenti tensors}\label{sec:Einstein.4.dim}

Taking into account Theorem \ref{thm:main.einstein}, if we find two non-proportional Einstein metrics within the same pc-projective class, we can construct a $2$-parametric family of pc-projectively equivalent Einstein metrics. For this reason, in this section, we determine when a metric $g$ occurring in Theorem \ref{thm:main} is Einstein together with its associated metric $\hat g$ given by \eqref{eq: companion.A}, since $g$ and $\hat{g}$ are pc-projectively equivalent and non-proportional. The subsections below are organized according to the cases of Theorem \ref{thm:main}.

\smallskip
In what follows we shall denote $\E=\Ric(g)-\lambda\,g$ (resp. $\hat\E=\Ric(\hat{g})-\hat\lambda\,\hat{g}$), so that $g$ (resp. $\hat{g}$) is an Einstein metric with Einstein constant $\lambda$ (resp. $\hat{\lambda}$) if and only if  $\E=0$ (resp. $\hat\E=0$). 

\subsection{The non-degenerate case}

\subsubsection{The case of real Liouville metrics}
\begin{prop}\label{prop.Einstein.real.Liouville}
A metric  $g$ of the real Liouville type from Theorem \ref{thm:main} is an Einstein metric with Einstein constant equal to $\lambda$ if and only if the following system 
\begin{equation}\label{eq:system.Einstein.real.Liouville}
\left\{
\begin{array}{l}
3\rho_{x^1}^2+2\lambda\rho^3-3k\rho^2-6h\rho = c_1
\\[0.2cm]
3\sigma^2_{x^2}-\varepsilon(2\lambda\sigma^3-3k\sigma^2-6h\sigma)=c_2
\end{array}
\right.
\end{equation}
is satisfied for some constants $h,k,c_1,c_2\in\R$.
\end{prop}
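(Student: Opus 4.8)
The plan is to compute the Ricci tensor of the real Liouville metric directly in the coordinates $(x^1,x^2,x^3,x^4)$ given in Theorem \ref{thm:main}, and then read off exactly which conditions on the one-variable functions $\rho=\rho(x^1)$ and $\sigma=\sigma(x^2)$ force $\Ric(g)=\lambda g$.

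\textbf{Step 1: setting up and exploiting the block structure.} First I would write $g$ in the explicit $4\times4$ form dictated by the theorem, noting that it has the block shape coming from the splitting $TM=\mathcal D\oplus\T\mathcal D$: the pair $(x^1,x^2)$ carries a genuine $2$-dimensional Liouville metric $g_L=(\rho-\sigma)(dx^1dx^1+\varepsilon\,dx^2dx^2)$, while $(x^3,x^4)$ enters through the one-forms $\theta_1=\rho_{x^1}(dx^3+\sigma\,dx^4)$ and $\theta_2=\sigma_{x^2}(dx^3+\rho\,dx^4)$, with $g=g_L-\frac{1}{\rho-\sigma}(\theta_1^2+\varepsilon\,\theta_2^2)$. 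Because $\de_3=\T V_1$ and $\de_4=\T V_2$ are Killing (Corollary \ref{cor:mui.killing}) and the metric coefficients depend only on $x^1,x^2$, the Christoffel symbols and hence the curvature simplify considerably; in particular $\Ric$ also depends only on $x^1,x^2$, and its mixed components must be compatible with the para-Hermitian property (the Ricci tensor of a \pk metric is para-Hermitian, as recalled before \eqref{eq:Ric.difference}).

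\textbf{Step 2: reduction to the two-dimensional Liouville computation.} The key simplification is that the $(x^1,x^2)$-block of $g$ is, up to the warping by $\rho-\sigma$, governed by the classical theory of $2$-dimensional Liouville metrics: for such a surface metric $(\rho-\sigma)(dx^1dx^1+\varepsilon\,dx^2dx^2)$ the Gaussian curvature is a known explicit expression in $\rho,\sigma$ and their first and second derivatives. I would compute the full $4$-dimensional $\Ric$ and observe that, after using the relations $A\grad\rho=\rho\grad\rho$, $A\grad\sigma=\sigma\grad\sigma$ and equation \eqref{eq.A} (equivalently, the fact that $\Lambda=\frac14\grad\tr A$ controls $\nabla^g A$), the condition $\Ric=\lambda g$ collapses to a single scalar ODE identity that separates into an $x^1$-part and an $x^2$-part. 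Separation of variables is what produces the two separation constants: the $x^1$-equation can only equal the $x^2$-equation if both equal a common value, and matching the polynomial-in-$\rho$ (resp. $\sigma$) structure introduces the auxiliary constants $h,k$ (coefficients of the lower-order terms $\rho,\rho^2$) and $c_1,c_2$ (the separation constants). One then integrates once: the appearance of $\rho_{x^1}^2$ and $\sigma_{x^2}^2$ rather than second derivatives reflects that a first integral of the Einstein ODE has been taken, so the final statement is the once-integrated form, with $c_1,c_2$ the integration constants.

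\textbf{Step 3: the main obstacle and how to handle it.} The real work — and the main obstacle — is the brute-force curvature computation: assembling $\Gamma^k_{ij}$ for this $4\times4$ metric and contracting to get $\Ric_{ij}$ is lengthy, and one must be careful with the $\varepsilon=\pm1$ signs and with the cross terms $dx^3dx^4$ coming from $\theta_1^2+\varepsilon\theta_2^2$. My strategy to keep this manageable is (i) use the Killing symmetry to kill all $x^3,x^4$-dependence at the outset, (ii) change to the adapted coframe $(dx^1,dx^2,\theta_1,\theta_2)$ so that $g$ is literally block-diagonal, and (iii) rely on the $2$-dimensional Liouville curvature formula for the hard block, treating the remaining contributions as a warped-product-type correction. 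Having obtained $\Ric-\lambda g$ as an explicit tensor, I would show its vanishing is equivalent to a relation of the form $3\rho_{x^1}^2+2\lambda\rho^3 - (\text{linear and quadratic in }\rho \text{ with coefficients }k,h)=\text{const}$ together with the mirror relation in $\sigma$ (with the $\varepsilon$ sign inserted exactly as in \eqref{eq:system.Einstein.real.Liouville}), which is the claimed system. Conversely, substituting a solution of \eqref{eq:system.Einstein.real.Liouville} back into the curvature expression and differentiating once recovers $\Ric=\lambda g$, giving the equivalence; I would verify that the constants $h,k,c_1,c_2$ are genuinely free by exhibiting that any choice yields, locally, functions $\rho,\sigma$ solving the ODEs.
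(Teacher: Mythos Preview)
Your proposal is correct and follows essentially the same approach as the paper: compute $\Ric-\lambda g$ directly, use that $\rho=\rho(x^1)$ and $\sigma=\sigma(x^2)$ to separate variables (producing the constant $k$), and integrate to reach the first-order system \eqref{eq:system.Einstein.real.Liouville}. The paper is slightly more economical in that it singles out the combination $\E_{11}+\varepsilon\E_{22}=0$ as the one scalar equation that separates cleanly into the third-order ODEs $\rho_{x^1x^1x^1}/\rho_{x^1}+2\lambda\rho=k$ and $-\varepsilon\,\sigma_{x^2x^2x^2}/\sigma_{x^2}+2\lambda\sigma=k$, then uses $\E_{11}=0$ alone to force the two integration constants $h_1,h_2$ to coincide, after which all remaining entries of $\E$ vanish automatically; your plan to compute the full $\Ric$ and then read off the separation achieves the same end with a bit more bookkeeping.
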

%
%
\begin{proof}
The equation
$
\E_{11}+\varepsilon\E_{22}=0
$
is equivalent to
\begin{equation*}
\frac{\rho_{x^1x^1x^1}}{\rho_{x^1}} + 2\lambda\rho =  -\varepsilon \frac{\sigma_{x^2x^2x^2}}{\sigma_{x^2}} + 2\lambda\sigma 
\end{equation*}
and, recalling that $\rho=\rho(x^1)$ and $\sigma=\sigma(x^2)$, the above equation gives
\begin{equation*}
\frac{\rho_{x^1x^1x^1}}{\rho_{x^1}} + 2\lambda\rho =  -\varepsilon \frac{\sigma_{x^2x^2x^2}}{\sigma_{x^2}} + 2\lambda\sigma =k\in\R\,
\end{equation*}
i.e.
\begin{equation}\label{eq:gianni.real.app}
\left\{
\begin{array}{lll}
\rho_{x^1x^1}+\lambda\rho^2-k\rho-h_1=0 & &
\\
& & h_1,h_2\in\R\,.
\\
\sigma_{x^2x^2}-\varepsilon(\lambda\sigma^2-k\sigma-h_2)=0
\end{array}
\right.
\end{equation}
If we substitute \eqref{eq:gianni.real.app} into the equation $\E_{11}=0$ we get $h_1=h_2=h$, and, taking this into account, by substituting \eqref{eq:gianni.real.app} in the remaining entries of $\E$, we get zero. Finally, we note that system \eqref{eq:gianni.real.app} leads to \eqref{eq:system.Einstein.real.Liouville}. 
\end{proof}

\begin{prop}\label{prop.Liouville.real.comp.bla}
Let $g$ be a metric of real Liouville type from Theorem \ref{thm:main}. If $g$ is  Einstein, then the metric $\hat{g}$ given by \eqref{eq: companion.A} is Einstein if and only if $\varepsilon c_1+ c_2=0$, where the $c_i$'s are defined within Proposition \ref{prop.Einstein.real.Liouville}, and the Einstein constant $\hat{\lambda}$ of $\hat{g}$ is equal to $\frac12 c_1$.
\end{prop}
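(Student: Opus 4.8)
I would prove Proposition~\ref{prop.Liouville.real.comp.bla} by a direct computation of the Ricci tensor of $\hat g$, exploiting the fact that $\hat g$ is again a real Liouville metric. Concretely, since $g$ is of real Liouville type with functions $\rho=\rho(x^1)$ and $\sigma=\sigma(x^2)$, and since $A=\mathrm{diag}(\rho,\sigma,\dots)$ acts on the leaf $L$ as $\mathrm{diag}(\rho,\sigma)$, the companion metric $\hat g = (\det A)^{-1/2}\,g\,A^{-1}$ restricted to $L$ has components obtained by multiplying the $(x^1,x^1)$-block of $g_L$ by $1/(\rho\sigma)\cdot 1/\rho$ and the $(x^2,x^2)$-block by $1/(\rho\sigma)\cdot 1/\sigma$. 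A standard change of eigenvalue variables $\rho\mapsto \tilde\rho := -1/\rho$, $\sigma \mapsto \tilde\sigma := -1/\sigma$ (the involution that appears already in the real projective theory, cf.\ \cite{m_alone}) brings $\hat g$ back into the exact normal form of a real Liouville metric, with new eigenvalue functions $\tilde\rho(x^1)$, $\tilde\sigma(x^2)$ and possibly a rescaled coordinate. Once this is established, Proposition~\ref{prop.Einstein.real.Liouville} applies verbatim to $\hat g$: it is Einstein with constant $\hat\lambda$ if and only if there are constants $\hat h,\hat k,\hat c_1,\hat c_2$ with $3\tilde\rho_{x^1}^2 + 2\hat\lambda\tilde\rho^3 - 3\hat k\tilde\rho^2 - 6\hat h\tilde\rho = \hat c_1$ and the analogous equation in $\tilde\sigma$.

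The heart of the argument is then purely algebraic: translate the system \eqref{eq:system.Einstein.real.Liouville} for $g$ (which holds by hypothesis) into a system for the new variables $\tilde\rho,\tilde\sigma$ and check when it has the required Liouville--Einstein shape. Writing $\rho = -1/\tilde\rho$, one has $\rho_{x^1} = \tilde\rho_{x^1}/\tilde\rho^2$, so $\rho_{x^1}^2 = \tilde\rho_{x^1}^2/\tilde\rho^4$; substituting into $3\rho_{x^1}^2 + 2\lambda\rho^3 - 3k\rho^2 - 6h\rho = c_1$ and multiplying through by $\tilde\rho^4$ gives
\begin{equation*}
3\tilde\rho_{x^1}^2 - 2\lambda\,\tilde\rho - 3k\,\tilde\rho^2 + 6h\,\tilde\rho^3 = c_1\,\tilde\rho^4,
\end{equation*}
which is \emph{not} of Liouville--Einstein type unless the quartic term vanishes, i.e.\ unless $c_1=0$; and symmetrically the $\tilde\sigma$-equation forces $\varepsilon$-weighted vanishing of the $\tilde\sigma^4$ term. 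A careful bookkeeping of the $\varepsilon$ factors (the second equation of \eqref{eq:system.Einstein.real.Liouville} carries an $\varepsilon$, and the change of variables on the $x^2$-side interacts with it) shows that the joint obstruction collapses to the single scalar condition $\varepsilon c_1 + c_2 = 0$, and that under this condition the resulting system for $(\tilde\rho,\tilde\sigma)$ is exactly of the form \eqref{eq:system.Einstein.real.Liouville} with new constants, the new "$c_1$-constant" being (after tracking normalizations) $\tfrac12 c_1$; matching the leading coefficient then identifies $\hat\lambda = \tfrac12 c_1$.

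Alternatively — and this may be the cleaner route to write up — one avoids the explicit coordinate change and instead uses the identity \eqref{eq:main} for the difference of Ricci tensors together with Corollary~\ref{cor.Filippo.Einstein.1}: assuming $g$ is Einstein with constant $\lambda$, one knows $\nabla^g\Lambda$ in terms of $A$ and a yet-undetermined constant playing the role of $\hat\lambda$, and one simply asks for which value of that constant the tensor $\widehat{\mathrm{Ric}}-\hat\lambda\,\hat g$ vanishes. Expanding $\widehat{\mathrm{Ric}} = \mathrm{Ric} + (\widehat{\mathrm{Ric}}-\mathrm{Ric})$ via \eqref{eq:main}, writing everything in terms of $A$, $\Lambda$ and $g$, and then re-expressing in the Liouville coordinates so that $\Lambda = \tfrac14(\rho_{x^1}\partial_{x^1}+\varepsilon\sigma_{x^2}\partial_{x^2})/(\rho-\sigma)$ is explicit, reduces $\hat\E=0$ to two ODE identities in $\rho$ and $\sigma$ respectively; subtracting the already-known relations \eqref{eq:system.Einstein.real.Liouville} leaves precisely the compatibility constraint $\varepsilon c_1 + c_2 = 0$ and the evaluation $\hat\lambda = \tfrac12 c_1$. \textbf{The main obstacle} is the algebraic patience required to carry the constants $h,k,c_1,c_2$ through the substitution without sign errors, especially the placement of the $\varepsilon=\pm1$ factor, since that is exactly what produces the asymmetric-looking condition $\varepsilon c_1 + c_2 = 0$ rather than $c_1 + c_2 = 0$; I would double-check it by testing the special case $\varepsilon = 1$ against the known Kähler (complex Liouville) analogue and against a concrete example such as $\rho,\sigma$ quadratic polynomials giving a space form.
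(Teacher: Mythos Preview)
Your second route---computing $\hat\E$ directly in the Liouville coordinates by expanding $\widehat{\Ric}$ and reducing modulo the relations \eqref{eq:system.Einstein.real.Liouville}---is essentially what the paper does: it writes out $\hat\E_{ij}$, observes that the particular combination $(\sigma-\rho)(\varepsilon\rho\,\hat\E_{11}-\sigma\,\hat\E_{22})$ collapses (on account of \eqref{eq:system.Einstein.real.Liouville}) to $\varepsilon c_1+c_2$, and then checks that under this constraint the remaining entries vanish if and only if $\hat\lambda=\tfrac12 c_1$. So that half of your plan is on target, modulo the fact that the paper does not route the computation through \eqref{eq:main} but just computes $\widehat{\Ric}$ from $\hat g$ directly.

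Your first route, however, has a genuine gap. You correctly note that $\hat g$ should be brought back into Liouville normal form, and you even flag ``possibly a rescaled coordinate''---but then you compare the transformed ODE
\[
3\tilde\rho_{x^1}^2 - 2\lambda\tilde\rho - 3k\tilde\rho^2 + 6h\tilde\rho^3 = c_1\tilde\rho^4
\]
to the Liouville--Einstein template \emph{in the original coordinate} $x^1$. That comparison is illegitimate: the leaf metric $\hat g_L$ works out to $-(\tilde\rho-\tilde\sigma)\bigl(\tilde\rho\,(dx^1)^2+\varepsilon\tilde\sigma\,(dx^2)^2\bigr)$, not $(\tilde\rho-\tilde\sigma)\bigl((d\tilde x^1)^2+\tilde\varepsilon(d\tilde x^2)^2\bigr)$, so one must first reparametrise via $d\tilde x^1=\sqrt{\lvert\tilde\rho\rvert}\,dx^1$ and similarly in $x^2$. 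Under that change $\tilde\rho_{x^1}^2=\lvert\tilde\rho\rvert\,\tilde\rho_{\tilde x^1}^2$, the quartic term $c_1\tilde\rho^4$ becomes cubic after dividing through by $\tilde\rho$, and the equation \emph{is} of Liouville--Einstein type for every value of $c_1$, with $\hat\lambda$ forced to equal $\mp\tfrac12 c_1$. The obstruction $\varepsilon c_1+c_2=0$ then arises not from killing a quartic term on each side separately, but from the requirement that the $\rho$-side and the $\sigma$-side yield the \emph{same} constants $\hat\lambda,\hat k,\hat h$. So your intermediate conclusion ``not of Liouville--Einstein type unless $c_1=0$'' is wrong, and the subsequent hand-wave (``careful bookkeeping\dots collapses to $\varepsilon c_1+c_2=0$'') papers over a step whose mechanism you have misidentified. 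The route is salvageable once this is fixed, and would then give a conceptually pleasant alternative to the paper's direct computation; but as written it does not go through.
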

\begin{proof}
On account of \eqref{eq:system.Einstein.real.Liouville}, the equation $(\sigma-\rho)(\varepsilon\rho\hat{\E}_{11} - \sigma\hat{\E}_{22})=0$ is (recalling that, in the case we are studying, $\rho\neq \sigma$)
$$
\varepsilon c_1+c_2=0\,.
$$
Substituting this relation into the tensor $\hat{\E}$, we realize  (taking always into account conditions \eqref{eq:system.Einstein.real.Liouville}) that we get zero if and only if $\hat{\lambda}=\frac12 c_1$.
\end{proof}
As we said in the beginning of this section, once we have two non-proportional para-K\"ahler-Einstein metrics belonging to the same pc-projective class, in view of Theorem \ref{thm:main.einstein}, we can construct a $2$-parametric family of para-K\"ahler-Einstein metrics. We will illustrate this procedure by means of a concrete example, that we write below.
\begin{ex}
Let $\varepsilon=1$, $\rho=-\frac{6}{\lambda (x^1)^2}$ and $\sigma=\frac{6}{\lambda(x^2)^2}$. Metric $g$ is an Einstein metric with Einstein constant equal to $\lambda$ as system \eqref{eq:system.Einstein.real.Liouville} is satisfied with $h=k=c_1=c_2=0$. In view of Proposition \ref{prop.Liouville.real.comp.bla}, metric $\hat{g}$ given by \eqref{eq: companion.A} is Ricci-flat. More precisely,
$$
\hat{g}=\frac{\lambda^2\big((x^1)^2+(x^2)^2\big)}{36}\left((x^1)^2dx^1dx^1 + (x^2)^2dx^2dx^2\right) + \frac19\lambda^2\left((x^1)^2-(x^2)^2\right)dx^3dx^3 - \frac43\lambda dx^3dx^4
$$
and formula \eqref{eq:lin.comb.metrics}, with $n=2$, gives the following $2$-parametric para-K\"ahler-Einstein metrics:
\begin{multline}\label{eq:super.appoggio}
-\frac{6\lambda^2}{B}\left((x^1)^2+(x^2)^2\right) \left( \frac{(x^1)^2}{\alpha\lambda (x^1)^2-6\beta}dx^1dx^1 + \frac{(x^2)^2}{\alpha\lambda (x^2)^2+6\beta}dx^2dx^2\right) \\
+\frac{24\lambda^2}{B^2}\left(\alpha\lambda(x^1)^4-\alpha\lambda (x^1)^2 (x^2)^2+\alpha\lambda (x^2)^4-6\beta(x^1)^2+6\beta(x^2)^2\right)dx^3dx^3
\\
-\frac{288\lambda}{B^2}\left(\alpha\lambda(x^1)^2-\alpha\lambda(x^2)^2-6\beta\right)dx^3dx^4 + \frac{864\lambda}{B^2}\alpha dx^4dx^4\,,
\end{multline}
where $B=(\alpha\lambda (x^1)^2-6\beta)(\alpha\lambda (x^2)^2+6\beta)$. The Einstein constant of \eqref{eq:super.appoggio} is $\lambda \alpha^3$.
\end{ex}

\subsubsection{The case of complex Liouville metrics}
In this section we always assume that the Cauchy-Riemann equations (and, of course, their compatibility conditions) coming from $\rho_{\bar z}=0$ are satisfied.
\begin{prop}\label{prop.Einstein.complex.Liouville}
A metric $g$ of the complex Liouville type from Theorem \ref{thm:main} is an Einstein metric with Einstein constant equal to $\lambda$ if and only if the following complex equation 
\begin{equation}\label{eq:system.Einstein.complex.Liouville}
\rho_z^2-\frac16\lambda\rho^3-a\rho^2-2h\rho+d=0
\end{equation}
is satisfied for some constants $a\in\R$ and $h,d\in\C$.
\end{prop}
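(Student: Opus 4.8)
The plan is to follow the proof of Proposition \ref{prop.Einstein.real.Liouville} step by step, replacing the separation of variables in $x^{1}$ and $x^{2}$ by the splitting into holomorphic and anti-holomorphic parts. Concretely, one works in the coordinates $(z,\bar z,x^{3},x^{4})$ in which the complex Liouville metric $g$ of Theorem \ref{thm:main} is written, using throughout that $\rho=\rho(z)$, $\bar\rho=\bar\rho(\bar z)$ and the Cauchy--Riemann relations implied by $\rho_{\bar z}=0$ together with their differential consequences. First I would compute the Ricci tensor $\Ric$ of $g$ in these coordinates and set $\E=\Ric-\lambda g$, so that $g$ is Einstein with constant $\lambda$ exactly when $\E=0$.

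The key step is then to isolate the combination of components of $\E$ that decouples the two variables, in analogy with $\E_{11}+\varepsilon\E_{22}=0$ in the real case. I expect this combination to reduce to an identity asserting that a holomorphic function of $z$ --- of the form $\rho_{zzz}/\rho_{z}$ plus a numerical multiple of $\lambda\rho$ --- equals its own complex conjugate, which is a function of $\bar z$ alone. A function that is at the same time holomorphic and anti-holomorphic is locally constant, and the conjugation symmetry forces that constant to be real; calling it $a\in\R$ and integrating once yields a second-order ODE for $\rho$ with a complex integration constant $h$, while multiplying this ODE by $\rho_{z}$ and integrating once more (exactly as in the real case) produces a first integral of the shape
\[
\rho_{z}^{2}-\tfrac16\lambda\rho^{3}-a\rho^{2}-2h\rho+d=0,
\]
with $d\in\C$ a further integration constant; the precise rational coefficients are fixed by the Ricci computation, and this is \eqref{eq:system.Einstein.complex.Liouville}. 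For sufficiency I would substitute this first integral (or the underlying second-order ODE) back into the remaining independent components of $\E$ and verify that they vanish identically. Unlike the real case, there is a single unknown function $\rho$, its conjugate being determined, so no identification of the type $h_{1}=h_{2}$ is needed; the constants $h$ and $d$ remain free in $\C$ because they are integration constants of a holomorphic ODE, whereas the reality of $a$ is precisely the output of the holomorphic-equals-anti-holomorphic argument.

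The main obstacle is, as already in Proposition \ref{prop.Einstein.real.Liouville}, purely computational: obtaining a manageable expression for the curvature of a metric that carries the factors $(\rho-\bar\rho)^{\pm 1}$ in these mixed holomorphic coordinates is lengthy, and the delicate point is to arrange the components of $\E$ so that the decoupling combination becomes transparent. Once the Ricci tensor is in hand, the remaining steps --- the separation argument, the two integrations, and the check that the leftover components of $\E$ vanish --- are routine.
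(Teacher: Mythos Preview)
Your proposal is correct and follows essentially the same route as the paper's proof. The only minor difference is that the paper uses the single component $\E_{12}=0$ (rather than a linear combination) to obtain the separated equation $\rho_{zzz}/\rho_{z}-\tfrac12\lambda\rho=\bar\rho_{\bar z\bar z\bar z}/\bar\rho_{\bar z}-\tfrac12\lambda\bar\rho$, from which the real constant $a$, the two integrations, and the final substitution into $\E$ proceed exactly as you describe.
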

\begin{proof}
The equation $\E_{12}=0$ can be expressed as follows:
\begin{equation*}
\frac{\rho_{zzz}}{\rho_z}-\frac12\lambda\rho = \frac{\bar{\rho}_{\bar{z}\bar{z}\bar{z}}}{\bar{\rho}_{\bar{z}}}-\frac12\lambda\bar\rho\,.
\end{equation*}
Being $\frac{\rho_{zzz}}{\rho_z}-\frac12\lambda\rho$ a holomorphic function, the above equality implies that it is a real constant, that we denote by $a$, so that  
$\rho_{zzz}-\frac12\lambda\rho\rho_z-a\rho_z=0$,
and
\begin{equation}\label{eq.fin.comp.liou}
\rho_{zz}-\frac14\lambda\rho^2-a\rho=h
\end{equation}
with $h\in\C$, leading to \eqref{eq:system.Einstein.complex.Liouville}. Finally, if we substitute \eqref{eq.fin.comp.liou} in  the tensor field $\E$, we obtain zero.
\end{proof}
\begin{rem}
The (third order) system coming from \eqref{eq:system.Einstein.complex.Liouville} is equivalent to
\begin{equation}\label{eq:sys.app}
\E_{11}=\E_{12}=0\,.
\end{equation}
Indeed, recalling that $\rho(x^1+ix^2)=R(x^1,x^2)+i I(x^1,x^2)$ satisfies the Cauchy-Riemann equation, a direct computation shows that system \eqref{eq:sys.app} vanishes if and only if $\E=0$. This system can be written as
\begin{equation}\label{eq:filippo.non.compact}
\begin{cases}
I\, I_{x^2}I_{x^1x^1x^2}+I\, I_{x^1}I_{x^1x^1x^1}-(I_{x^2})^2I_{x^1x^1}-(I_{x^1})^2I_{x^1x^1}=0&\\
\lambda I(I_{x^1})^2+\lambda I(I_{x^2})^2-2 I_{x^2}I_{x^1x^1x^1}+2I_{x^1}I_{x^1x^1x^2}=0\,,&\\
\end{cases}
\end{equation}
that has the advantage to be expressed only in terms of the real function $I$. 
Considering that $I$ is a non-zero harmonic function, system \eqref{eq:filippo.non.compact} can be put in the following more elegant form:
\begin{equation*}
\begin{cases}
\displaystyle
\left(\frac{(I_{x^1}^2)_{x^1}}{I}\right)_{x^1}
-
\left(\frac{(I_{x^2}^2)_{x^2}}{I}\right)_{x^2}
= 0
\\[4mm]
\displaystyle
\left(\frac{(I_{x^1}^2)_{x^1}}{I}\right)_{x^2}
+
\left(\frac{(I_{x^2}^2)_{x^2}}{I}\right)_{x^1}
= -\lambda\,(I_{x^1}^2 + I_{x^2}^2)
\end{cases}
\end{equation*}
\end{rem}
\begin{rem}
We note that we can obtain some relations between derivatives of lower order by differentiating equation \eqref{eq:system.Einstein.complex.Liouville} w.r.t. $z$ and equating to zero the real and imaginary parts. More precisely, we have that
\begin{equation}\label{eq:first.derivatives.complex}
\left\{
\begin{array}{l}
I_{x^1}I_{x^2} = h_1I + \frac14 \lambda R^2 I + aRI-\frac{1}{12}\lambda I^3+h_2R-\frac12 d_2
\\[0.2cm]
I_{x^2}^2-I_{x^1}^2 = -\frac12 \lambda RI^2 + \frac16\lambda R^3-aI^2+aR^2-2h_2I+2h_1 R -d_1
\end{array}
\right.
\end{equation}
and
\begin{equation}\label{eq:second.derivatives.complex}
\left\{
\begin{array}{l}
I_{x^1x^1}= \frac12\lambda RI +aI+h_2
\\[0.2cm]
I_{x^1x^2}= \frac14\lambda R^2-\frac14\lambda I^2+aR+h_1
\end{array}
\right.
\end{equation}
where $h_1=\frac12(h+\bar{h})$,  $h_2=\frac{1}{2i}(h-\bar{h})$, $d_1=\frac12(d+\bar{d})$ and $d_2=\frac{1}{2i}(d-\bar{d})$. System \eqref{eq:second.derivatives.complex} is nothing but \eqref{eq.fin.comp.liou}.
\end{rem}

\begin{prop}
Let $g$ be a metric of complex Liouville type from Theorem \ref{thm:main}. If $g$ is Einstein, then the metric $\hat{g}$ given by \eqref{eq: companion.A} is Einstein if and only if $h$ and $d$ given by \eqref{eq:system.Einstein.complex.Liouville} are real constants. In this case, the Einstein constant $\hat{\lambda}$ of $\hat{g}$ is equal to $6d$.
\end{prop}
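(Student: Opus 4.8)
The argument follows the same pattern as the real Liouville case treated in Proposition~\ref{prop.Liouville.real.comp.bla}. First I would write the companion metric $\hat g=(\det A)^{-\frac12}\,g\,A^{-1}$ from \eqref{eq: companion.A} explicitly in the complex Liouville coordinates of Theorem~\ref{thm:main}. In these coordinates $A$ is block-diagonal with $\det A'=\rho\bar\rho=|\rho|^2$, so $(\det A)^{-1/2}=|\rho|^{-2}$, and inverting the five-term expression for $A$ gives $\hat g$ as an explicit matrix whose entries are rational functions of $\rho,\bar\rho,\rho_z,\bar\rho_{\bar z}$. (Equivalently, one may note that the Benenti tensor of $\hat g$ associated with $g$ is $\hat A=A^{-1}$, so $\hat g$ is again of complex Liouville type, with eigenvalue essentially $1/\rho$; this is a useful consistency check but not strictly needed.) From $\hat g$ I would then compute the Ricci tensor $\widehat\Ric$ and form $\hat\E=\widehat\Ric-\hat\lambda\,\hat g$.

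\textbf{Reduction step.} Assuming $g$ is Einstein, Proposition~\ref{prop.Einstein.complex.Liouville} gives the holomorphic relation \eqref{eq:system.Einstein.complex.Liouville}, and hence its differential consequences \eqref{eq:first.derivatives.complex} and \eqref{eq:second.derivatives.complex}, together with their complex conjugates. These express every first and second derivative of $R=\mathrm{Re}\,\rho$ and $I=\mathrm{Im}\,\rho$ as a polynomial in $R,I$ with coefficients built from the real constant $a$ and from $h_1,h_2,d_1,d_2$ (the real and imaginary parts of $h,d$). Substituting all these relations into the entries of $\hat\E$ turns each of them into a polynomial in $R,I$. The key is to form the $\C$-bilinear analogue of the combination $(\sigma-\rho)(\varepsilon\rho\,\hat\E_{11}-\sigma\,\hat\E_{22})$ used in the real case, namely a suitable linear combination of $\hat\E_{11},\hat\E_{12},\hat\E_{22}$ with coefficients polynomial in $\rho,\bar\rho$: carrying this out, the result depends only on the imaginary parts $h_2$ and $d_2$, and vanishes identically if and only if $h_2=d_2=0$, i.e. if and only if $h,d\in\R$. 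This is the exact analogue of the step in the proof of Proposition~\ref{prop.Einstein.complex.Liouville} where a holomorphic quantity is forced to be a real constant.

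\textbf{Conclusion.} Once $h,d\in\R$, relations \eqref{eq:first.derivatives.complex}--\eqref{eq:second.derivatives.complex} simplify further and, substituting them back into all remaining entries of $\hat\E$, one finds that $\hat\E=0$ precisely when the (so far free) constant $\hat\lambda$ equals $6d$; for any other choice a nonzero term proportional to $(\hat\lambda-6d)$ times a nowhere-vanishing function of $\rho,\bar\rho$ survives. This yields both the stated equivalence and the value of the Einstein constant. The main obstacle is purely computational: the entries of $\hat\E$ are lengthy rational functions of $\rho,\bar\rho$ and their derivatives, and the reduction via \eqref{eq:system.Einstein.complex.Liouville}--\eqref{eq:second.derivatives.complex} requires careful handling of the Cauchy--Riemann equations; the only conceptual point is the choice of the combination of the $\hat\E_{ij}$ that isolates exactly $h_2$ and $d_2$.
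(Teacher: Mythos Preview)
Your proposal is correct and follows essentially the same route as the paper: the paper also substitutes the differential consequences \eqref{eq:first.derivatives.complex}--\eqref{eq:second.derivatives.complex} of the Einstein condition into a distinguished combination of the entries of $\hat\E$, obtaining $2h_2R-d_2=0$ and hence $h_2=d_2=0$, and then checks that the remaining entries vanish precisely for $\hat\lambda=6d$. The only detail the paper pins down that you leave unspecified is the combination itself: after first noting $\hat\E_{11}+\hat\E_{22}=0$, the paper uses $(\rho-\bar\rho)\bigl(\rho\,\hat\E(\partial_z,\partial_z)+\bar\rho\,\hat\E(\partial_{\bar z},\partial_{\bar z})\bigr)=2iI\,(R\,\hat\E_{11}+I\,\hat\E_{12})$, which is exactly the complex analogue of the real-Liouville combination you had in mind.
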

\begin{proof}
In a sense, the proof follows that of the real Liouville case, but with some cares. 
A direct computation shows that $\hat{\E}_{11}+\hat{\E}_{22}=0$ and, on account of this, similarly to the real case, we consider the equation
$$
(\rho-\bar{\rho})(\rho\,\hat{\E}(\partial_z,\partial_z) + \bar{\rho}\,\hat{\E}(\partial_{\bar z},\partial_{\bar z}))=2iI(R\hat{\E}_{11}+I\hat{\E}_{12})=0\,.
$$
Since we are assuming that $g$ is an Einstein metric with Einstein constant equal to $\lambda$, system \eqref{eq:filippo.non.compact} is satisfied. By  substituting it in the above equation, we obtain
$$
\lambda I^3+6I\,I_{x^1x^2}-6I_{x^1}I_{x^2}=0
$$
that, in view of \eqref{eq:first.derivatives.complex} and \eqref{eq:second.derivatives.complex}, gives
$$
2h_2R-d_2=0\,.
$$
Since $R$ cannot be a constant, the above relation yields $h_2=d_2=0$, i.e., the constants $h$ and $d$ of \eqref{eq:system.Einstein.complex.Liouville} are real.

\smallskip\noindent
Taking into account this, a straightforward computation shows that conditions \eqref{eq:filippo.non.compact}, \eqref{eq:first.derivatives.complex} and \eqref{eq:second.derivatives.complex} imply $\hat{\E}=0$ if and only if $\hat{\lambda}=6d$.
\end{proof}
%

\subsection{The degenerate case}

\subsubsection{The case when $\dim D=2$}

\begin{prop}
A metric $g$ of the first type of the case $\dim D=2$ of Theorem \ref{thm:main} is an Einstein metric with Einstein constant equal to $\lambda$ if and only if the following system
\begin{equation}\label{eq:Apostolov.Einstein.1}
\left\{
\begin{array}{l}
\mu=\frac{3(\rho-c)\rho_{x^2}}{2\lambda(\rho-c)^3+c_2(\rho-c)^2+c_1}
\\[0.3cm]
\nu_{x^3} = -\frac16 c_2\nu^2 + f\nu + h
\end{array}
\right.
\end{equation}
is satisfied for some constants $c_1,c_2$ and functions $f=f(x^3)$, $h=h(x^3)$.
\end{prop}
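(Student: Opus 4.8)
The plan is to write the Einstein condition as $\E:=\Ric(g)-\lambda g=0$ and to analyze this tensor equation directly in the coordinates $(x^1,x^2,x^3,x^4)$ provided by Theorem \ref{thm:main}, exploiting that $\rho$ and $\mu$ depend only on $x^2$ while $\nu$ depends only on $(x^3,x^4)$. First I would compute all the components $\E_{ij}$: because of the sparse structure of $g$ (only $g_{11},g_{13},g_{22},g_{33},g_{34}$ are nonzero, with $\det g=\rho_{x^2}^{2}(\rho-c)^{2}\nu_{x^4}^{2}$, and $\T$ given by \eqref{eq:matrix.T.adapted} after a linear change), this Ricci computation, although lengthy, is manageable and several off-diagonal components of $\E$ vanish identically; the surviving ones will be organized according to whether they involve differentiation in the $x^2$-direction or in the $(x^3,x^4)$-directions.

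Next I would extract the equations that only feel the variable $x^2$: a suitable linear combination of $\E_{11},\E_{13},\E_{33}$ (morally, the piece of $\E$ living in the directions $\partial_1,\partial_2$) produces, after clearing denominators, an ordinary differential relation in $\rho$ and $\mu$. Since $\rho$ is non-constant, I may regard $\mu$ as a function of $\rho$; the equation then amounts to a linear ODE whose general solution shows that the quantity $\dfrac{3(\rho-c)\rho_{x^2}}{\mu}$ is a cubic polynomial in $\rho-c$, which after matching with the Einstein constant takes the form
\[
\frac{3(\rho-c)\,\rho_{x^2}}{\mu}=2\lambda(\rho-c)^{3}+c_2(\rho-c)^{2}+c_1
\]
for some real constants $c_1,c_2$, the appearance of $\lambda$ in the leading coefficient being fixed by the trace (scalar curvature) part. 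This is precisely the first equation of \eqref{eq:Apostolov.Einstein.1}.

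Substituting this relation back into the remaining components of $\E$, I expect them to collapse to a single equation involving only $\nu$ and its derivatives. Since $\nu=\nu(x^3,x^4)$ with $\nu_{x^4}\neq0$, separating off the $x^4$-dependence forces $\nu_{x^3}+\tfrac16 c_2\nu^{2}$ to be an affine function of $\nu$ with coefficients depending on $x^3$ only, that is $\nu_{x^3}=-\tfrac16 c_2\nu^{2}+f(x^3)\nu+h(x^3)$, where the constant $c_2$ is the same one appearing above; this reappearance of $c_2$ encodes the coupling between the two blocks of the metric and is the second equation of \eqref{eq:Apostolov.Einstein.1}. Conversely, plugging both relations of \eqref{eq:Apostolov.Einstein.1} into the full expression of $\E$, one checks by direct substitution that every component vanishes, so the two conditions are also sufficient.

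The main obstacle is the bookkeeping: the Ricci computation is long, and the delicate point is to choose the combinations of the $\E_{ij}$ so that the $x^2$-block and the $(x^3,x^4)$-block genuinely decouple, and then to identify that a single separation constant $c_2$ is shared by the two resulting equations with exactly the coefficients displayed. The verification of sufficiency is then a direct but tedious substitution, of the same nature as in Propositions \ref{prop.Einstein.real.Liouville} and \ref{prop.Einstein.complex.Liouville}.
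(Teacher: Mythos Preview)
Your proposal is correct and follows essentially the same route as the paper: compute $\E=\Ric(g)-\lambda g$, observe that only the components $\E_{11},\E_{13},\E_{22},\E_{33},\E_{34}$ survive and that the relations $\nu\E_{11}+\E_{13}=\mu^2\E_{11}+\E_{22}=\nu^2\E_{11}-\E_{33}=0$ reduce the system to $\E_{11}=0$ and $\E_{34}=0$; the first is an ODE in $x^2$ giving the expression for $\mu$, and substituting it into $\E_{34}=0$ yields $\bigl(c_2\nu+3\,\nu_{x^3x^4}/\nu_{x^4}\bigr)_{x^4}=0$, which integrates to the Riccati equation for $\nu$. Your description is slightly vaguer about which components carry the information (you speak of combinations of $\E_{11},\E_{13},\E_{33}$ rather than isolating $\E_{11}$ and $\E_{34}$), but the mechanism and the conclusion coincide with the paper's argument.
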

\begin{proof}
The only non-zero entries of $\E$ are $\E_{11}$, $\E_{13}$, $\E_{22}$, $\E_{33}$ and $\E_{34}$. Furthermore, we have the following relations:
$$
\nu\E_{11}+\E_{13}=\mu^2\E_{11}+\E_{22}=\nu^2\E_{11}-\E_{33}=0\,,
$$
so that the system to study reduces to $\E_{11}=0\,, \E_{34}=0$.
The equation $\E_{11}=0$ is an ODE whose solution is given by the first equation of \eqref{eq:Apostolov.Einstein.1}. By substituting it in $\E_{34}=0$ we obtain
\begin{equation}\label{eq:Apos.app}
\left(c_2\nu +3\frac{\nu_{x^3x^4}}{\nu_{x^4}}\right)_{x^4}=0
\end{equation}
that leads to the second equation of \eqref{eq:Apostolov.Einstein.1}.
\end{proof}
\begin{prop}
Let $g$ be a metric of the first type of the case $\dim D=2$ of Theorem \ref{thm:main}. Let $c_1$ and $c_2$ be given by \eqref{eq:Apostolov.Einstein.1}. If $g$ is an Einstein metric, then the metric $\hat{g}$ given by \eqref{eq: companion.A} is Einstein if and only if $c_1=0$.
In this case, the Einstein constant $\hat{\lambda}$ of $\hat{g}$ is equal to $c^2\left(\lambda c-\frac12 c_2\right)$.
\end{prop}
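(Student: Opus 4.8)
The strategy mirrors the proof of Proposition~\ref{prop.Liouville.real.comp.bla} in the real Liouville case, but with the explicit coordinate expressions of the first family in the case $\dim D=2$ of Theorem~\ref{thm:main}. First I would write down $\hat g = (\det A)^{-1/2} gA^{-1}$ explicitly in the coordinates $(x^1,x^2,x^3,x^4)$, using the block form of $A$ (recall $\det A = \rho^2 c^2$, so $(\det A)^{-1/2} = 1/(|\rho|\,|c|)$ up to sign, which is harmless since only $\hat g^{-1}$ enters the Ricci computation) and the matrix $A^{-1}$. Then I would compute $\widehat\Ric$ and form $\hat\E = \widehat\Ric - \hat\lambda\,\hat g$, treating $\hat\lambda$ as an unknown constant to be determined. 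Throughout, I would systematically substitute the Einstein conditions \eqref{eq:Apostolov.Einstein.1} on $\mu$ and $\nu$ (and their differential consequences, obtained by differentiating those relations) to reduce $\hat\E$ to an expression polynomial in $\rho$, $\nu$ and the constants $c$, $c_1$, $c_2$, $\lambda$, $\hat\lambda$.

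**Key steps in order.** (1) Identify which entries of $\hat\E$ are generically nonzero; by analogy with the case of $g$ I expect only a handful, and I expect linear combinations of them (with coefficients involving $\nu$, $\mu$, $\rho$) to vanish identically once \eqref{eq:Apostolov.Einstein.1} holds, so that the system collapses to one or two scalar equations, presumably the analogues of $\hat\E_{11}=0$ and $\hat\E_{34}=0$. (2) Extract from the "diagonal'' equation a relation of the form $P(\rho)=0$ where $P$ is a polynomial in $\rho$ whose coefficients are affine in $\hat\lambda$ and involve $c_1,c_2,\lambda,c$; since $\rho$ is non-constant (this is part of the running Assumption and of the hypotheses of Theorem~\ref{thm:main}), $P$ must be the zero polynomial, forcing relations among the constants. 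I expect exactly two such relations: one determining $\hat\lambda$ in terms of $c,c_1,c_2,\lambda$ — which should come out to $\hat\lambda = c^2(\lambda c - \tfrac12 c_2)$ — and one of the form $c_1 = 0$. (3) Conversely, substitute $c_1=0$ and $\hat\lambda = c^2(\lambda c - \tfrac12 c_2)$ back and verify that \emph{all} remaining entries of $\hat\E$ vanish, using once more \eqref{eq:Apostolov.Einstein.1} and the harmonicity/ODE constraints on $\nu$; this establishes sufficiency.

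**The main obstacle.** The computation of $\widehat\Ric$ from the explicit $\hat g$ is the only real difficulty: $\hat g$ is a $4\times 4$ matrix with entries involving $\rho(x^2)$, $\mu(x^2)$, $\nu(x^3,x^4)$ and the constant $c$, and inverting it and then running the Christoffel-symbol-and-curvature machinery produces lengthy intermediate expressions. The saving grace is structural: $\partial_1$ and — after inspecting the normal form — the vector fields dual to $x^1$ are Killing (they descend from the canonical Killing fields of Section~\ref{sec:killing}), so the coefficients are independent of $x^1$, and the block/near-block structure of $\hat g$ (echoing Remark~\ref{rem:sqrtA}) keeps the number of nonvanishing Christoffel symbols modest. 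I would also exploit the general identity \eqref{eq:main} / \eqref{main.new} for the difference $\widehat\Ric - \Ric$ in terms of $\Lambda = \tfrac14\grad\tr A$ and $A$, rather than recomputing $\widehat\Ric$ from scratch: since $g$ is already assumed Einstein, $\widehat\Ric = \lambda g + 2(n-1)\bigl(g(A^{-1}\cdot,\nabla^g_{(\cdot)}\Lambda) - g(A^{-1}\Lambda,\Lambda)\,g(\cdot,A^{-1}\cdot)\bigr)$ with $n=2$, and one only needs $\nabla^g\Lambda$, $\tr A = 2\rho + 2c$, $\Lambda = \tfrac12\grad\rho$, which are controlled by \eqref{eq.A} and by the metric in \eqref{eq:g.T.A.dim.1.plus}-type form. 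This reduces the problem to comparing $\widehat\Ric$ so obtained with $\hat\lambda\,\hat g$, i.e. to an algebraic identity in $\rho,\nu$ and the constants, which is exactly where the conditions $c_1=0$ and $\hat\lambda = c^2(\lambda c - \tfrac12 c_2)$ emerge. As with the analogous statements in the paper, I would report only the decisive reductions and the final relations, suppressing the routine algebra.
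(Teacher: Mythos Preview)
Your plan is correct and matches the paper's proof almost exactly: the paper computes $\hat\E$ directly, observes that the only nonzero entries are $\hat\E_{11},\hat\E_{13},\hat\E_{22},\hat\E_{33},\hat\E_{34}$ with the same linear dependencies as for $\E$ (so the system reduces to $\hat\E_{11}=0$ and $\hat\E_{34}=0$), then substitutes \eqref{eq:Apostolov.Einstein.1} into $\hat\E_{11}=0$ to obtain a factored polynomial identity in $r=\rho-c$ that forces $c_1=0$ and $\hat\lambda=c^2(\lambda c-\tfrac12 c_2)$, and finally checks that $\hat\E_{34}=0$ reduces to \eqref{eq:Apos.app}, already satisfied. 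Your alternative of computing $\widehat\Ric$ via the difference formula \eqref{main.new} rather than from scratch is a legitimate shortcut but leads to the same algebraic reduction; one small slip in your sketch is the reference to ``\eqref{eq:g.T.A.dim.1.plus}-type form'', which is the $\dim D=1$ metric, not the first $\dim D=2$ family.
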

\begin{proof}
The only non-zero entries of $\hat\E$ are $\hat\E_{11}$, $\hat\E_{13}$, $\hat\E_{22}$, $\hat\E_{33}$ and $\hat\E_{34}$. Furthermore, we have the following relations:
$$
\nu\hat\E_{11}+\hat\E_{13}=\mu^2\hat\E_{11}+\hat\E_{22}=\nu^2\hat\E_{11}-\hat\E_{33}=0\,,
$$
so that the system to study reduces to $\hat\E_{11}=0\,, \hat\E_{34}=0$.

\smallskip\noindent
Now, since, by hypothesis, $g$ is an Einstein metric with Einstein constant equal to $\lambda$, system \eqref{eq:Apostolov.Einstein.1} is satisfied. In particular, on account of the first equation of such system, recalling that $\rho$ cannot be a constant and that $c\neq 0$, equation $\hat\E_{11}=0$ is
$$
(2\lambda r^3+c_2 r^2 + c_1)
\big((-2c^3\lambda  + c_2 c^2  + 2\hat{\lambda})r^2 -2c_1 c\, r -c_1c^2\big)=0\,, \quad r:=\rho-c\,.
$$
The quantity $2\lambda r^3+c_2 r^2 + c_1$ cannot vanish as $\mu$ is given by the first equation of \eqref{eq:Apostolov.Einstein.1}. Then, $(-2c^3\lambda  + c_2 c^2  + 2\hat{\lambda})r^2 -2c_1 c\, r -c_1c^2=0$, which implies 
\begin{equation*}
c_1=0\,, \quad \hat\lambda=c^2\left(\lambda c-\frac12 c_2\right)\,.
\end{equation*}
By substituting the above equations, together with the first equation of \eqref{eq:Apostolov.Einstein.1}, into $\hat{\E}_{34}=0$, we obtain \eqref{eq:Apos.app}, that is satisfied as we are supposing that $g$ is an Einstein metric.
\end{proof}
The other metrics of this case, i.e., the second, third and fourth of the case $\dim D=2$ of Theorem \ref{thm:main}, are flat. 
A straightfoward computation shows that, actually, all the metrics \eqref{eq:lin.comb.metrics} with $\hat{g}$ given by \eqref{eq: companion.A}  are flat.

\subsubsection{The case when $\dim D=1$}

\begin{prop}
Let $g$ be a metric of the case $\dim D=1$ of Theorem \ref{thm:main}. If $g$ is an Einstein metric, then it is flat. In this case, all the metrics \eqref{eq:lin.comb.metrics}, with $\hat{g}$ given by \eqref{eq: companion.A}, are flat.
\end{prop}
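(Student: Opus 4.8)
The plan is to proceed exactly as in the $\dim D=2$ case treated just above, case by case according to the two families of normal forms with $\dim D=1$ listed in Theorem \ref{thm:main}. Recall that, by the last case analysis of the proof of Theorem \ref{thm:main} (Section \ref{sec:1.dim.plus.plus} and the subsection following it), the second family is obtained from the first by replacing $\T$ with $-\T$, leaving $g$ and $A$ unchanged; since the Ricci tensor and flatness of $g$ depend only on $g$, and the construction \eqref{eq: companion.A} of $\hat g$ depends only on $g$ and $A$, it suffices to carry out the computation for the first family and the conclusion for the second is automatic. So fix $g$, $\T$, $A$ in the coordinates $(x^1,x^2,x^3,x^4)$ of the first $\dim D=1$ normal form, with arbitrary functions $\rho=\rho(x^3)$ and $F=F(x^2,\varphi(x^3,x^4))$, and with $\rho\neq 0$, $\rho_{x^3}\neq 0$, $F_{x^4}\neq 0$.

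First I would compute the Ricci tensor $\Ric(g)$ in these coordinates and form $\E = \Ric(g)-\lambda g$. As in the $\dim D=2$ analysis, one expects only a short list of nonzero entries of $\E$, and several linear relations among them (with coefficients built from $\rho$, $F$, $c$), so that the Einstein condition $\E=0$ reduces to one or two scalar PDEs. Imposing these, one should find that the only way to kill all entries simultaneously forces, in particular, $\rho_{x^3}$ and the relevant $F$-derivatives to satisfy relations that pin down $\lambda=0$ and make the full curvature tensor $R(g)$ vanish; concretely I would substitute the constraints coming from $\E=0$ directly into the components of $R(g)$ and check they are identically zero. This is the analogue of the observation, made for the second/third/fourth $\dim D=2$ families, that the metrics are flat, except that here flatness is a \emph{consequence} of the Einstein hypothesis rather than automatic. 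Having established $R(g)=0$, the metric $g$ is flat; then $\Ric(g)=0$, so $g$ is Ricci-flat and in particular Einstein with $\lambda=0$.

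Next, with $g$ flat (hence Einstein with $\lambda=0$), I would turn to the pc-projectively equivalent metrics \eqref{eq:lin.comb.metrics}. One clean way: since $A$ is non-degenerate, $\hat g=(\det A)^{-1/2}gA^{-1}$ of \eqref{eq: companion.A} is pc-projectively equivalent to $g$; I would compute $\Ric(\hat g)$ using formula \eqref{eq:Ric.difference} (equivalently \eqref{eq:main}), where $\Psi=-\tfrac14 d\log\det A$ is explicit from the normal form. Feeding in the flatness constraints on $\rho$ and $F$ derived in the previous step should force $\nabla^g\Psi-\Psi\otimes\Psi-(\Psi\circ\T)\otimes(\Psi\circ\T)=0$, hence $\widehat\Ric=\Ric=0$, so $\hat g$ too is Ricci-flat; being a surface metric of neutral signature it is in fact flat (para-Kähler surfaces with vanishing Ricci are flat, as their curvature is determined by the Ricci tensor). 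Since every member of the family \eqref{eq:lin.comb.metrics} arises as $(\det\tilde A)^{-1/2}g\tilde A^{-1}$ with $\tilde A=\alpha A+\beta\Id\in\mathcal A(g,\T)$ (cf. \eqref{eq:def.A.tilde}), the same argument applied with $\tilde A$ in place of $A$ — the constraints on $\rho$, $F$ being independent of $\alpha,\beta$ — shows each $\tilde g_{[\alpha,\beta]}$ is flat. Alternatively, one invokes Theorem \ref{thm:main.einstein}: $g$ and $\hat g$ are pc-projectively equivalent Ricci-flat (hence para-Kähler-Einstein with constant $0$) metrics, so all of \eqref{eq:lin.comb.metrics} are para-Kähler-Einstein, and a direct check of the Einstein constant (or of the Weyl tensor, which vanishes in dimension four for these explicit forms) upgrades ``Einstein with constant $0$'' to ``flat''.

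The main obstacle is purely computational: writing down $\Ric(g)$ and then $R(g)$ for the rather involved normal form with the nested function $F=F(x^2,\varphi(x^3,x^4))$, and verifying that the Einstein equations force enough structure on $\rho$ and $F$ to make the full curvature vanish — one must be careful that the constraints extracted from $\E=0$ are exactly the ones appearing in $R(g)$, and that no spurious branch (e.g. $\rho_{x^3}=0$, excluded by hypothesis) is being used. There is no conceptual difficulty: once flatness of $g$ is in hand, the passage to $\hat g$ and to the whole family \eqref{eq:lin.comb.metrics} is immediate from \eqref{eq:Ric.difference} together with the explicitness of $\Psi$, or from Theorem \ref{thm:main.einstein}. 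As with the earlier degenerate cases, I would report only the key relations and the final vanishing statements, leaving the bulk of the algebra to a symbolic computation.
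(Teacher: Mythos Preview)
Your plan for the first half --- computing $\E=\Ric(g)-\lambda g$, extracting from its entries that $\lambda=0$ together with a PDE constraint on $F$, and then verifying that these force $R(g)=0$ --- is exactly what the paper does. Concretely, the paper singles out $\E_{13}=0$, which gives $\lambda\rho_{x^3}=0$ (hence $\lambda=0$), and then $\E_{24}=0$, which gives $\big(F_{\varphi\varphi}/F_\varphi\big)_{x^2}=0$, i.e.\ $F=\alpha(\varphi)\beta(x^2)+\gamma(x^2)$; with these two relations a direct check shows $R(g)=0$. So that part is fine.

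The gap is in the second half. Your key step for the family \eqref{eq:lin.comb.metrics} is the assertion that ``para-K\"ahler surfaces with vanishing Ricci are flat, as their curvature is determined by the Ricci tensor.'' This is false: in real dimension four the curvature tensor is \emph{not} determined by the Ricci tensor --- there is a Weyl part (for (para-)K\"ahler metrics, a Bochner-type tensor) that can survive even when $\Ric=0$. So establishing $\widehat\Ric=0$ via \eqref{eq:main}, or invoking Theorem \ref{thm:main.einstein} to conclude the whole family is Einstein with constant $0$, does \emph{not} by itself upgrade to flatness; your parenthetical ``or of the Weyl tensor, which vanishes in dimension four for these explicit forms'' is precisely the missing computation, not a fact you can cite. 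The paper does not attempt any such shortcut: it simply repeats the direct curvature computation for the explicit metrics $\tilde g_{[\alpha,\beta]}=(\det\tilde A)^{-1/2}g\tilde A^{-1}$ built from the normal-form $g$ and $A$ (with the constraints on $\rho$ and $F$ already in place), and verifies that $R(\tilde g_{[\alpha,\beta]})=0$. You should do the same.
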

\begin{proof}
The equation $\E_{13}=0$ gives $\lambda\rho_{x^3}=0$ and, since by hypothesis $\rho_{x^3}\neq 0$, we get $\lambda=0$, i.e., in this case, if the metric $g$ is Einstein, necessarily it must be Ricci-flat. By substituting $\lambda=0$ into the equation $\E_{24}=0$, recalling that $F=F(x^2,\varphi(x^3,x^4))$ is an arbitrary function such that $F_{x^4}\neq 0$, we obtain
$$
\frac{F_{\varphi\varphi}F_{x^2\varphi}-F_{\varphi}F_{x^2\varphi\varphi}}{F_{\varphi}^2}=-\left(\frac{F_{\varphi\varphi}}{F_{\varphi}}\right)_{x^2}=0
$$
that gives
$$
F(x^2,\varphi(x^3,x^4))=\alpha(\varphi(x^3,x^4))\beta(x^2)+\gamma(x^2)
$$
with $\alpha,\beta,\gamma$ arbitrary functions. A direct computation shows that, if $\lambda=0$ and $F$ is given by the above formula, the curvature tensor of $g$ is zero.

\smallskip\noindent
With similar computations one can show that, under the hypothesis that $g$ is Einstein, i.e., in this particular case, flat, also the metrics \eqref{eq:lin.comb.metrics}, with $\hat{g}$ given by \eqref{eq: companion.A}, are flat.
\end{proof}

\section*{Acknowledgements}
The first author gratefully acknowledges support by the project ``Finanziamento alla Ricerca'' under the contract numbers 53\_RBA21MANGIO, and by the PRIN project 2022 “Real and Complex Manifolds: Geometry and Holomorphic Dynamics” (code 2022AP8HZ9). 
The second author is supported by the ``Starting Grant'' under the contact number 53\_RSG22SALFIL. Both authors are members of the GNSAGA of the INdAM.

\subsection*{Declarations}

\subsubsection*{Associated data}

The authors declare that no associated data is included.

\subsubsection*{Conflict of interest }
The authors declare that they have no conflict of interest

\vspace{1.5cm}
\noindent
\textsc{(G. Manno) Dipartimento di Scienze Matematiche ``G. L. Lagrange'', Politecnico di Torino, Corso Duca degli Abruzzi 24, 10129 Torino.}\\
\textit{Email address:} \texttt{giovanni.manno@polito.it}\\

\noindent
\textsc{(F. Salis) Dipartimento di Scienze Matematiche ``G. L. Lagrange'', Politecnico di Torino, Corso Duca degli Abruzzi 24, 10129 Torino.}\\
\textit{Email address:} \texttt{filippo.salis@polito.it}
\end{document}